\newtheorem*{rep@theorem}{\rep@title}
\newcommand{\newreptheorem}[2]{%
\newenvironment{rep#1}[1]{%
 \def\rep@title{#2 \ref{##1}}%
 \begin{rep@theorem}}%
 {\end{rep@theorem}}}
\newtheorem{theorem}{Theorem}[section]
\newtheorem{proposition}[theorem]{Proposition}
\newtheorem{corollary}[theorem]{Corollary}
\newtheorem{lemma}[theorem]{Lemma}
\newtheorem{definition}[theorem]{Definition}
\newtheorem{example}[theorem]{Example}
\newtheorem{remark}[theorem]{Remark}
\begin{document}

\title[Divergence property of some generalised Thompson groups]
{Divergence property of the Brown-Thompson groups and braided Thompson groups}%$F_n$, $T_n$ and $V_n$ (Brown Thompson's groups)} % divergence and the estimate of the word length//and Braided Thompson's groups.
\author{Xiaobing Sheng}

\address{Graduate School of Mathematical Sciences, 
	The University of Tokyo}

\email{sheng-xiaobing374@g.ecc.u-tokyo.ac.jp
}

\subjclass[2020]{% 
Primary 20F65, 
}

\keywords{% 
generalised Thompson groups, divergence property}

\date{\today}							% Activate to display a given date or no date

\maketitle

%abstract

\begin{abstract} 
Golan and Sapir proved that Thompson's groups $F$, $T$ and $V$ have linear divergence.
In the current paper, 
we focus on the divergence property of several generalisations of the Thompson groups. 
We first consider the Brown-Thompson groups $F_n$, $T_n$ and $V_n$ 
(also called Brown-Higman-Thompson group in some other context) 
and find that these groups also have linear divergence functions.
We then focus on the braided Thompson groups $BF,$ $\widehat{BF}$ and $\widehat{BV}$ and 
proved that these groups have linear divergence.
The case of $BV$ has also been done independently by Kodama.
\end{abstract}

\bigskip

\tableofcontents

\section{Introduction}
%Background and related work

Thompson's groups $F$, $T$, $V$ were first constructed 
from a logic point of view 
while later found to have connections 
with many other branches of mathematics such as string rewriting systems, 
homotopy theory, 
combinatorics 
and dynamical systems.

$F$ and $T$ can be regarded as subgroups 
of the group of homeomorphisms of the circle 
while $V$ can be seen as a subgroup of the group of self-homomorphisms of the Cantor set. 
These groups have been generalised to many larger classes since they were defined. 
Thompson's group $V$ has first been generalised to 
a family of finitely presented infinite groups by Higman 
in the 70's 
and the groups $F$, $T$ and $V$ have further been extended to infinite families by Brown \cite{MR885095} 
and later being generalised by Stein \cite{MR1094555} 
and at the same time, 
some of their homological and simplicity results are obtained\cite{MR885095}. 
More recently, the braided version 
and the higher dimensional version ones are defined and investigated.

Among these generalisations of Thompson's groups, 
most of the early results are on algebraic and topological properties of groups. 
The geometry of the groups are beginning to attract the interest of more geometric group theorists
in the recent years 
partially because of the still open amenability problem of Thompson's group $F.$

We list a selection of the geometric results as follows: 
the result on the quadratic Dehn function of $F$ by Guba \cite{MR2104775, MR1750493}, 
the construction of the $CAT(0)$ cube complexes, 
on which the groups $F$, $T$, $V$ act properly by Farley \cite{MR2393179}, 
some subgroup distortion results by Burillo, et al. \cite{MR1670622, MR1806724, MR3091272, MR2452818}
and results by Golan and Sapir \cite{MR3978542} 
on the divergence property of the three groups.
A more recent result on the linear divergence of the braided Thompson's group $BV$ 
has been proved by Kodama \cite{Kodama:2020to}. 

\subsection{Background}

%Motivation
This work focuses on one of the central notion in geometric group theory, 
``hyperbolicity" or ``the nature of non-positive curvature'' of groups 
and we consider some generalisations of a particular 
class of groups, Thompson's groups.

Thompson's groups are not Gromov hyperbolic, 
but the above geometric results on the groups suggest that 
these groups might have some ``hyperbolicity"s in a coarse sense. 
Then the question is how we are going to measure these ``hyperbolicity", 
since there might be many generalisations of the notion of hyperbolic-type properties.
The one that we will be focusing on, in the current paper, 
is the divergence property.

The idea of divergence property first appeared
in \textit{Asymptotic invariants of infinite groups} by Gromov \cite{MR1253544}, 
where he suggested that in the non-positively curved spaces, 
\textit{One expects, 
that (at least after plusifications) `infinitely close' geodesic rays issuing from a point, 
either diverge linearly or exponentially}. 
The formal definition was first stated by Gersten \cite{MR1302334}, 
and the notion later proved to be a quasi-isometric invariant of metric spaces. 

Intuitively, divergence property measures \textit{how two geodesics in a metric space go apart,} 
or in other words, 
\textit{how hard it is to connect points on two distinct geodesic rays if the backtracking is not allowed.}
%% The connections to the asymptotoic cones
When considering the notion of the generalised ``hyperbolicity " or the hyperbolic-type properties, 
in some particularly ``nice" space ( for example, Gromov hyperbolic space, CAT(0) spaces), 
the condition of super-linear divergence is equivalent to 
the notion of having cut-points in the asymptotic cones in the groups or 
the ``Morse" property. 

% geomtric group theory

%When we enlarge our scope a little bit further, 
When enlarging the scope further,
the divergence property also indicates some large scale geometric properties of the spaces 
and their associated groups.
Since the notion divergence is a quasi-isometry invariant, 
we would know precisely the quasi-isometry classes of the groups.

On the other hand, in Tran's thesis \cite{MR3474592},  %tran
he investigated the divergence property of finitely presented groups, 
and he further defined relative divergence 
and the usual definition of divergence in the sense of Gersten \cite{MR1302334} 
which gives a lower bound on the divergence functions of all the geodesics in the whole space.  

In this paper, 
we focus on some generalisations of Thompson's groups 
and prove that similar divergence property can be found on geodesics of these groups. 
More precisely, 
we prove that Brown-Thompson groups and the braided Thompson group $BF$ 
have linear divergence property.
The result for $BF$ together with the result for $BV$ in \cite{Kodama:2020to} 
can be extended to obtain that 
$\widehat{BV}$ and $\widehat{BF}$ also have linear divergence.

\section{Basic definitions and notation}

Thompson's groups $F$, $T$ 
as well as the Brown-Thompson groups $F_n$, $T_n$ 
can be informally considered as subgroups of piecewise-linear homeomorphisms of 
the unit interval $[0,1]$ and the circle $S^1$, respectively, 
while the group $V$ and $V_n$ can be regarded as 
subgroups of the homeomorphisms of the Cantor set
and they all have several different interpretations. 
Here, we will be following the convention in \cite{MR3978542} 
and utilising both combinatorial and analytical definitions. 

\begin{definition}[Thompson's groups $F$, $T$]
Thompson's groups $F$, $T$, 
are the groups of orientation-preserving piecewise-linear homeomorphisms 
of the unit interval $ [0,1] $ and the $S^1$ where $S^1$ is the unit interval $[0,1]$ 
identifying the endpoints to themselves, respectively, 
which are differentiable except at finitely many dyadic rationals 
such that the slope of each subinterval is an integer power of $2.$
\end{definition}

\begin{definition}[The Brown-Thompson groups]
The Brown-Thompson groups $F_n$, $T_n$, 
are the groups of orientation-preserving piecewise-linear homeomorphisms 
of the unit interval $ [0,1] $ and the circle $S^1$, respectively, 
which are differentiable except at finitely many $n$-adic rationals 
such that the slope of each subinterval is an integer power of $n.$
\end{definition} 
\begin{definition}[Thompson's group $V$ and its generalisations $V_n$]
$V$ and $V_n$ are the groups of the right-continuous bijections 
from the unit interval $[0,1]$ onto itself 
which are differentiable except at finitely many dyadic and $n$-adic rationals, respectively, 
such that the slope of each interval is a power of $2$ and an integer power of $n,$ respectively. 
\end{definition}
%support 
From this standard interpretation of the groups, 
we define the following notion: 
an element $g \in V$ is \textit{supported} 
on some interval $(a, b) \subset [0,1]$ 
meaning that for any point $x \in (a, b),$ 
we have $g(x) \neq x.$ 
The collection of such subintervals in $[0,1]$ are called 
the \textit{support} of $g.$
The idea can be extended to the Brown-Thompson groups as well. 
%with the $n$-ary tree pair representations.
%and the branches can be described in the same manner.  
The Thompson groups have many different definitions 
\cite{MR0396769, MR1426438, MR1806724}. 
Among these different interpretations of the elements of the Thompson groups, 
we will mainly use the tree pair representations. 
%introductory%quasi-isometric embedding

\subsection{The group elements, tree pair representations}
The tree pair representations come from the fact 
that intervals or circles with partitions by dyadic rationals 
can be identified with trees.

A pair of rooted $n$-ary trees having the same number of leaves 
with identical labelings give rise to a representation for an element of $F_n$, 
a pair of such trees with cyclic labelings 
represent an element of $T_n$ \cite{Sheng:2018aa} 
and 
a pair with labelings 
induced from an element of the symmetric group $S_{l(n-1)+1}$ representing an element of $V_n,$ 
here $l \in \mathbb{N}$ and $l(n-1)+1$ is the number of the leaves in each tree of the pair.
Hence, the elements in Thompson's groups and the generalisations 
can be represented by such labeled tree pairs. 

Such pairs of trees can be used to give 
the unique normal form for elements in $F, F_n$ 
and a standard form for elements in $T, T_n$ 
with respect to the infinite standard generating sets. 
These forms can be used to estimate the word lengths of the elements of these groups 
and the bounds of the word lengths of elements in $V, V_n$
by replacing the standard infinite generating sets by the finite ones.
This alternative definition is also given in \cite{MR1806724, MR2452818}. % metrics...
%and closely related to the finite and infinite group presentations of the Thompson's groups 
%and their generalisations.
Below is a more precise description for the case of the group $F.$

% the tree pair and the elements of the group
%
For each unit interval $[0,1]$ 
with finitely many dyadic breakpoints, 
the subintervals are of the form 
$[ \frac{k}{2^m}, \frac{k+1}{2^m}]$ 
where $k+1\leq 2^m$ and $m, k \in \mathbb{N}\cup \{0\}$.
%Then the map $n \mapsto \frac{k}{2^n} $ 
%provides a partial order on the breakpoints in $[0,1].$ 
A unit interval $[0,1]$ with finitely many breakpoints are called 
\textit{dyadically subdivided interval}
and can be associated with a rooted finite binary tree 
by associating each of these breakpoints with a $2$-caret, 
i.e. a rooted binary tree with one vertex to be the root and two edges attached to it.
%The partial order of the breakpoints 
Then we can associate two intervals 
having the same number of dyadic breakpoints 
with a pair of finite binary trees 
with the same number of leaves,
namely, we have associated an element of Thompson's group $F$
with a pair of rooted finite binary trees.

Here the leaves in each tree of a tree pair 
correspond to the dyadic subintervals in each $[0,1]$
and they are labeled by the natural number $\mathbb{N}$ 
to indicate 
which leaf in the source tree maps to which leaf in the target.
It is proved in \cite{MR1426438} that 
there is a uniquely reduced tree pair representative for each element in $F.$ 
The above construction can be generalised to $T$ and $V$ 
with some variations on the labelings 
as well as to the Brown-Thompson groups $F_n$, $T_n$ and $V_n$ 
by changing binary trees to the $n$-ary trees.

%infinitly generating set 
The Brown-Thompson groups $F_n$ $T_n$ and $V_n$ 
as the groups $F,$ $T,$ $V$ 
are proved to be finitely presented 
\cite{MR1426438, MR885095}. %Sheng:2018aa. 
Here we illustrate the tree pair representations of the group elements 
in the standard finite generating sets $\mathcal{C}$ 
to give a brief idea of this form in Figure \ref{fig:generators}.
\begin{figure}[h!]
\centerline{\includegraphics[width=5in]{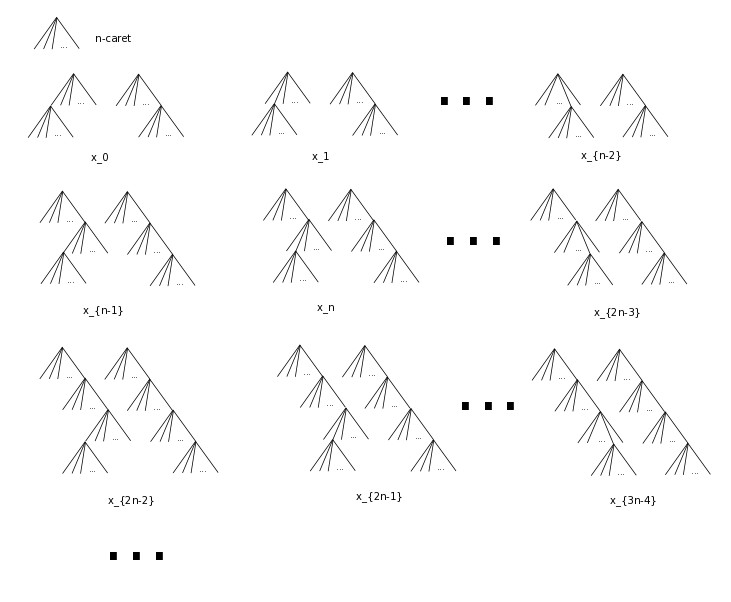}}
\centerline{\includegraphics[width=5in]{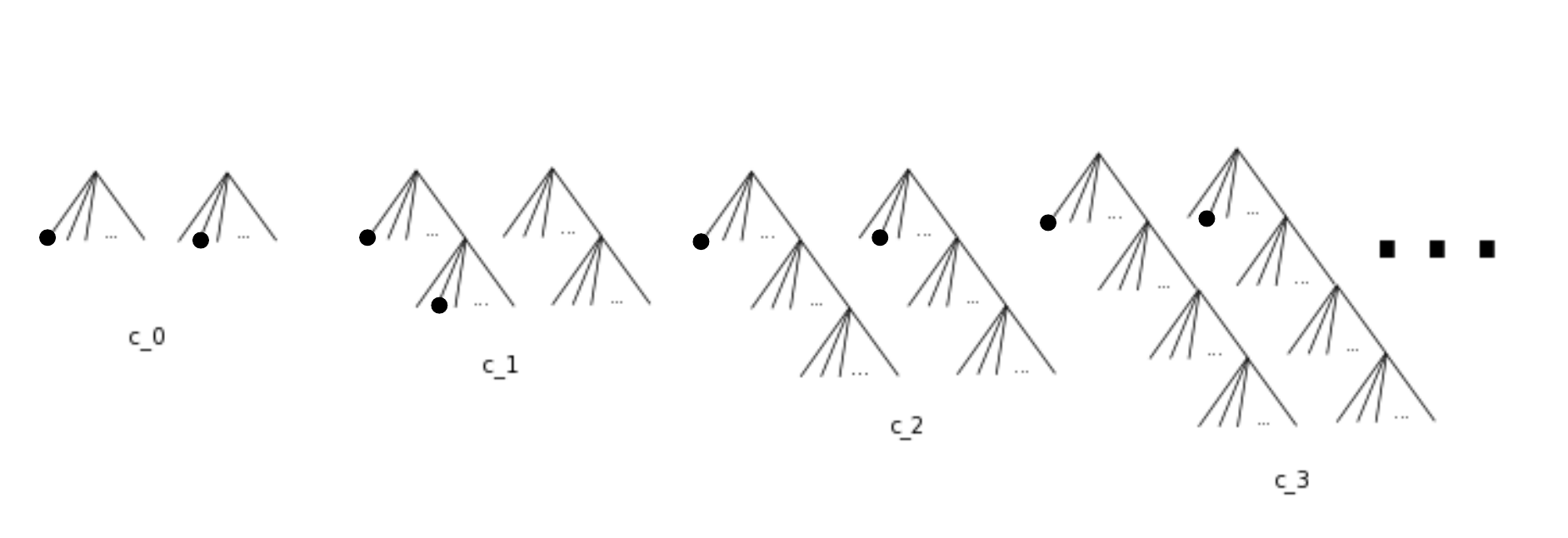}}
\centerline{\includegraphics[width=3in]{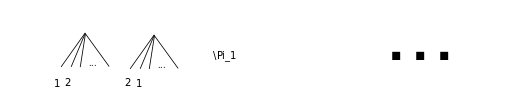}}%\vspace*{8pt}
\caption{The infinite generating set for $F_n$, $T_n$ and $V_n$  \label{fig:generators} }
\end{figure}
 
%Cantor set 
%On the other hand, a unit interval $[0,1]$ 
%with finitely many dyadic breakpoints 
%can be associated to the ternary Cantor set by consider the set of the binary words : 
%We consider the unit interval $[0,1]$ 
%and take out the middle subinterval $[\frac{1}{3},\frac{2}{3}]$ 
%after subdividing it into three equal subintervals 
%and continue the process infinitely many times.
%This produces the ternary Cantor set. 
%By identifying the deleted middle subinterval in $[0,1]$ and the dyadic breakpoints, 
%we associated the ternary Cantor set to the unit interval with infinitely many dyadic points.
%A unit interval $[0,1]$ 
%with finitely many dyadic breakpoints 
%can be seen as the process of constructing the Cantor set being stopped half way.
\subsection{Tree pairs, the Cantor set and the binary words}
Alternatively, 
when we consider such tree pairs, 
we consider 
the correspondence between a rooted finite binary tree 
and a different rooted finite binary tree 
with the same number of leaves 
which can be regarded as a partial automorphism of the infinite binary tree \cite{MR2952772}
such that only the finite top part from the root of the tree is replaced. 
In addition, 
since the boundary of the infinite binary tree
can be identified with 
the Cantor set, 
an element of Thompson's group $V$ can be regarded as 
the homeomorphisms of the Cantor set 
that preserves finite number of the partitions of the Cantor set.

% binary words
We consider again the tree pair representations 
as partial automorphisms of the rooted infinite binary tree denoted by $\mathcal{T}_{\infty}.$ 
For $\mathcal{T}_{\infty},$ 
each node $\nu$ can be associated with a unique finite path from the root, 
and we can read off a binary word from the root along the path to the node $\nu$ as follows:
we identify the root of the binary tree with the empty word $\varepsilon,$ 
when the path going down to the lower left vertex through an edge in a $2$-caret, 
we add $0$ to the word representing the path 
and when the path going down to the lower right vertex in the same $2$-caret, 
we concatenate a $1$ to the word representing the path.
Then, each leaf in a rooted finite binary tree can be identified 
with a finite binary word in alphabet $\{0,1\}.$ 
The correspondence between a finite binary tree and another finite binary tree 
with the same number of the leaves 
can then be regarded as 
the correspondence between a finite set of binary words 
and another finite set of binary words of the same order.  
These lead to the concept of branches.

% inifinite binary words
\subsection{Branches and branches of a tree}
\label{subsec22}
Following the description in Golan and Sapir \cite{MR3978542}, %Divergence of Thompson's groups 
we let the symbol $( \mathrm{T}_{+}, \sigma, \mathrm{T}_{-} )$ denote a tree pair 
defining an element $g \in V $, 
where $ \mathrm{T}_{+}$ and $\mathrm{T}_{-} $ are rooted finite binary trees 
with the same number of the leaves 
and $\sigma$ denotes the permutation of the order of the leaves.
\begin{definition}
For every leaf in some finite rooted binary tree $\mathrm{T},$ 
we associate with it 
a path labelled by a binary word 
from the root to $\gamma_o$ with the notation 
$\ell_{\gamma_o}(\mathrm{T}).$ 
We call the path $\ell_{\gamma_o}(\mathrm{T})$ a branch of the tree. % branch of a tree
\end{definition}
By identifying a leaf in the finite binary tree 
with a finite path 
and hence with a finite binary word $\omega_o$ over the alphabet $\{0,1\},$ 
we could express the correspondence between a leaf labeled $\omega_o$ in source tree $\mathrm{T}_{+}$ 
and a leaf labeled $\omega_{\sigma(0)}$ in the target tree $\mathrm{T}_{-}$ 
of a tree pair $( \mathrm{T}_{+}, \sigma, \mathrm{T}_{-} )$ 
representing the group element $g$ of $V$ 
by the correspondence between a finite path $\ell_{\omega_o}(\mathrm{T}_{+})$ 
in the source tree $\mathrm{T}_{+}$ 
and a finite path $\ell_{\omega_{\sigma(o)}}(\mathrm{T}_{-})$ 
in the target tree $\mathrm{T}_{-}.$ 
\begin{definition}
We denote the correspondence by 
$\ell_{\omega_o}(\mathrm{T}_{+}) \mapsto \ell_{\omega_{\sigma(o)}}(\mathrm{T}_{-})$ 
for $( \mathrm{T}_{+}, \sigma, \mathrm{T}_{-} )$ representing $g \in V$ 
and call it a branch of tree pair $( \mathrm{T}_{+}, \sigma, \mathrm{T}_{-} )$ 
or a branch of the group element $g$ that the tree pair represents. %branch of a tree pair 
\end{definition}
We can now generalise the notion to as follows: 
we denote, similarly, by $( \mathrm{T}_{n(+)}, \sigma, \mathrm{T}_{n(-)} ),$ 
a tree pair where $\mathrm{T}_n$ is an $n$-ary tree, for an element $g \in \mathcal{G}.$
Here $\mathcal{G}$ could be $F_n$, $T_n$, $V_n$ where $n \in \mathbb{N}$ and $n \geq 2.$ 
When $\mathcal{G} = F_n$, $\sigma$ represents trivial permutation, 
when $\mathcal{G} = T_n$, $\sigma$ is a cyclic permutation in the cyclic group of order $l(n-1)+1$ 
where $l \in \mathbb{N}$ and $\sigma$ represents 
a permutation in the symmetric group $\mathfrak{S}_{l(n-1)+1}$ 
for some $l \in \mathbb{N}$ when $\mathcal{G} = V_n.$

\begin{definition}
Then we could associate a path labelled by an $n$-ary word 
from the root of the tree to some leaf $\omega_{o}$ in the source tree
to the notation $\ell_{\omega_{o}}(\mathrm{T}_n).$ 
Similarly, we denote by $\ell_{\omega_{o}}(\mathrm{T}_{n(+)}) \mapsto \ell_{\omega_{\sigma(o)}}(\mathrm{T}_{n(-)})$ 
some branch of the element $g \in \mathcal{G}.$
\end{definition}

Note that the words corresponding to the leaves are not binary words anymore, 
they are now some $n$-ary words.

%%%%%%%%%%%%%%%%%%%%%%%%%%
%%%%%%%%%%%%%%%%%%%%%%%%%%%%%
%branches
The branches of a tree pair representation 
provide some partial information of a group element.
For instant, when considering the group elements of the Thompson groups 
to be the maps from the unit interval to itself, 
a branch of the unique tree pair representing a group element mapping 
a branch of the source tree to a branch of the target tree 
only gives the map between two subintervals 
which reflects the self-similarity properties of the trees. 
More precisely, when we identify the unit interval $ [0,1]$ with an $n$-nary tree $\textrm{T}_{n(\pm)}$, 
a branch of a tree can be interpreted as a subinterval of $ [0,1]$ with $n$-adic bounds 
and a branch of a tree pair can be seen as 
the correspondence between two subintervals of the source interval $[0,1]$ 
and the target interval $[0,1]$ respectively.

When given the product of two group elements represented by tree pairs, 
the support, branches and the tree pair representation of the product 
can be interchanged to provide desired information. 

% []
We introduce another notation before we connect the dots together.
\begin{definition}
Let $g \in V_n,$ 
represented by tree pair $(\mathrm{G}_{+}, \sigma_g, \mathrm{G}_{-}).$ 
We denote by $g_{[ \omega' ]}$ the tree pair constructed as follows:
Take an element $h \in V_n$ represented by the tree pair 
$(\mathrm{T}_{+}, \sigma, \mathrm{T}_{-})$ containing a branch 
$\ell_{\omega'}(\mathrm{T}_{+}) \mapsto \ell_{\omega'}(\mathrm{T}_{-}).$
We attach the tree $\mathrm{G}_{+}$ to the leaf labeled 
$\omega'$ in the branch $\ell_{\omega'}(\mathrm{T}_{+})$ 
and $\mathrm{G}_{-}$ to the leaf labeled 
$\omega'$ in the branch $\ell_{\omega'}(\mathrm{T}_{-}).$
More generally, 
let $t \in V_n$ be represented by 
$(\mathrm{T}_{+}, \sigma, \mathrm{T}_{-})$
and let $\ell_o(\mathrm{T}_{+}) \mapsto \ell_{\sigma(o)}(\mathrm{T}_{-})$ be some branch of the element $t.$ 
Then we denote by $g_{[\ell_o(\mathrm{T}_{+}) \mapsto \ell_{\sigma(o)}(\mathrm{T}_{-})]}$ 
the tree pair 
such that the source tree is the tree $\mathrm{T}_{+}$ with $\mathrm{G}_{+}$ attached to the leaf $o$ 
and the target tree is the tree $\mathrm{T}_{-}$ with $\mathrm{G}_{-}$ attached to the leaf $\sigma(0).$
\end{definition}
%
%Now that we have introduced these notion 
%and have built up the connection between all these notion, 
%we introduce the following application on the multiplication of tree pairs. 
%Let $g, t \in \mathcal{G}$ be represented by 
%the unique tree pair representations 
%$(\mathrm{G}_{+}, \sigma_g, \mathrm{G}_{-})$ 
%and $(\mathrm{T}_{+}, \sigma, \mathrm{T}_{-})$, respectively, 
%and let $\ell_o(\mathrm{T}_{+}) \mapsto \ell_{\sigma(o)}(\mathrm{T}_{-})$ be some branch of the element $t.$
%Then we denote by $g_{[\ell_o(\mathrm{T}_{+}) \mapsto \ell_{\sigma(o)}(\mathrm{T}_{-})]}$ 
%the tree pair 
%such that the source tree is the tree $\mathrm{T}_{+}$ with $\mathrm{G}_{+}$ attached to the leaf $o$ 
%and the target tree is the tree $\mathrm{T}_{-}$ with $\mathrm{G}_{-}$ attached to the leaf $\sigma(0).$
%
For example, 
we could take both the notation $x_{0_{[u+(n-1)]}}$ and $x_{{n-1}_{[u]}}$ 
where $u \in \{0, \cdots, n-2\}$ 
to be the same tree pair representation 
to represent the same element.
\label{branch}

%%graph picture
This simply connects the dynamical interpretation 
and the combinatorial interpretation of the group elements of Thompson's groups \cite{MR3091272} 
as well as provides a method to estimate of the changes in the number of carets.

Combining these different interpretations of the elements of the Thompson groups, 
we realised that 
the branches of tree pair representation of the group elements 
keep track of the support of the groups 
and detect the changes in the word length 
when having the product of the group elements. 
We will mainly use the tree pair representations and the branches 
to find the desired path for some later arguments.

\subsection{The estimate of the word lengths via tree pair representations} 
In the current section, 
we investigate the word lengths of the elements in the Brown-Thompson groups 
and it turns out that the word length relies largely on the number of the carets 
in the tree pair representation. 

The \textit{word length} of a group element $g$
with respect to some finite generating set of this group is the length of the shortest word 
representing $g$ 
in this finite generating set 
denoted by $| \cdot |$, or $| \cdot |_{\Sigma}$ 
when the finite generating set $\Sigma$ needs to be specified. 

Since the tree pair representations are usually not unique 
for a group element in Thompson's groups and their generalisations, 
we take the unique reduced tree pair representative for each group element \cite{MR1806724}, 
define the following form 
%which turned out to be a form that is close to the normal form
with respect to the infinite generating set.

\begin{definition}[\cite{MR2452818,Sheng:2018aa}]
Let the reduced labelled tree pair $(\mathrm{T}_{n(+)}, \sigma, \mathrm{T}_{n(-)} )$ represent 
an element $g$ in $T_n$ or in $V_n$ 
and $\mathrm{T}_{n(+)}$ and $\mathrm{T}_{n(-)}$ each have $i$ carets. 
Let $\mathrm{R}$ be the all right tree with $i$ carets,
i.e. a tree constructed by attaching carets consequently 
to the rightmost leaf of the previously attached caret.  
We write $g$ as a product $\textbf{p}\sigma\textbf{q},$ 
where:

\begin{enumerate}

     \item $\textbf{p}$, a positive word in the infinite generating set of $F_{n}$ 
     and of the form $\textbf{p} = x_{i_1}^{r_1}x_{i_2}^{r_2} \cdots x_{i_y}^{r_y} $ where $i_1 < i_2 \cdots < i_y $) 
     with tree pair $(\mathrm{T}_{n(+)},\mathrm{id}, \mathrm{R}).$
     \item

           \begin{itemize} 
           
           \item For $g \in F_n,$ $\sigma$ is just the identity $\rm id,$
           
           \item For $g \in T_n$, $\sigma$ is $c_{i-1}^{j},$ 
           the $j$th power of torsion element $c_{i-1}$ with $1 \leq j < i(n-1)+1$, 
           which can be represented by a tree pair with two $\mathrm{R}$'s with $i$ $n$-carets, 
           i. e. $(\mathrm{R}, c_{i-1}^{j}, \mathrm{R}).$ 

           \item For $g \in V_n$, $\sigma$ is a permutation in the symmetric group $\mathfrak{S}_{i(n-1)+1},$
           which permutes the leaves of the tree pair, 
           we have $(\mathrm{R}, \sigma, \mathrm{R})$ as the factor in the middle,

           \end{itemize}

           and  
      \item $\textbf{q}$, a negative word, 
is the of the form $x_{j_z}^{-s_z} \cdots x_{j_2}^{-s_2}x_{j_1}^{-s_1}$ where $j_1< j_2 <\cdots < j_z$) 
for an element in $F_{n}$ represented by  $(\mathrm{R},\mathrm{id}, \mathrm{T}_{n(-)}).$

\end{enumerate} 

We call such a product $\textbf{p}\sigma\textbf{q}$, a generalised $\textbf{pcq}$ factorisation. 
An example is illustrated in Figure \ref{fig:fig2}.
\end{definition}

%%   pcq form  

\begin{figure}[h]
\centerline{\includegraphics[width=5in]{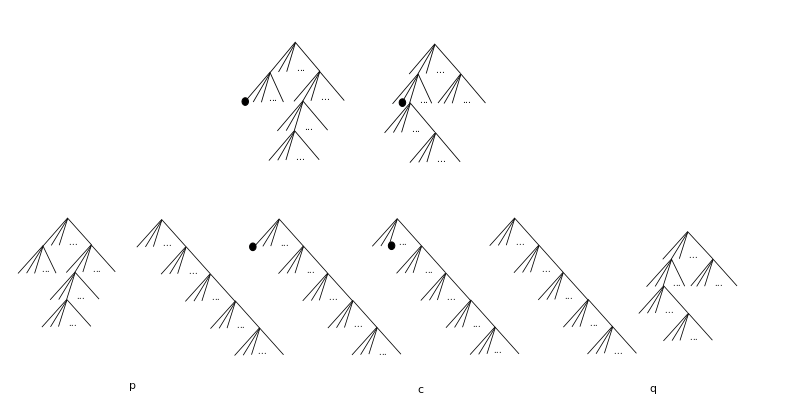}}
%\vspace*{8pt}
 \caption{$\textbf{pcq}$ factorisation  }
 \label{fig:fig2}
 
 \end{figure}

Note that a word in the form of 
$x_{i_1}^{r_1}x_{i_2}^{r_2} \cdots x_{i_y}^{r_y}c_{i-1}^{j} x_{j_z}^{-s_z} \cdots x_{j_2}^{-s_2}x_{j_1}^{-s_1} $ in $T_n$, 
where $i_1 < i_2 \cdots < i_y$ and $j_z >\cdots > j_2 > j_1 $, $r_k$'s 
and $s_l$'s are all positive and $1 \leq j \leq i(n-1)+1$ for the cyclic part $c_{i-1}^{j}$ in the middle.
$x_{i_1}^{r_1}x_{i_2}^{r_2} \cdots x_{i_y}^{r_y}$ and $x_{j_z}^{-s_z} \cdots x_{j_2}^{-s_2}x_{j_1}^{-s_1}$ 
are words represented in the standard infinite generating set of $F_n$ 
Figure \ref{fig:generators}).

\subsubsection{The estimate of the word length of elements in $F_n$, $T_n$}

There is a connection between the word length 
and the uniquely reduced tree pair representation of a group element 
by comparing the number of the building blocks of each $n$-nary trees in the tree pair, 
which is a rooted $n$-ary tree with one vertex representing the root and $n$ edges attached to it, 
called the $n$-caret, 
and the number of the leaves in each tree. 

As is proved in Burillo et. al \cite{MR1806724, MR2452818, Sheng:2018aa}, 
these two notion are interchangeable 
and the word length of the group elements in $F$, $F_n$, $T$, $T_n$ can be estimated 
by the number of carets of the trees in the tree pair representation as follows.

% F_p->F_q, metric properties in T, % T_n q.i. T
\begin{lemma}[\cite{MR2452818}]
Let $\omega \in T_n,$ %be in the $\textbf{pcq}$ factorisation, i.e.
%$$\omega = x_{i_1}^{r_1}x_{i_2}^{r_2} \cdots x_{i_y}^{r_y}c_{i-1}^{j} x_{j_z}^{-s_z} \cdots x_{j_2}^{-s_2}x_{j_1}^{-s_1}.$$ 
Let $N_n(\omega)$ denote the number of carets in each of the trees in the unique tree pair representing $\omega$, 
let $|\omega |_n$ denote the word length of $\omega$ 
with respect to the standard finite generating set $\{x_0, x_1, \cdots, x_{n-1}, c_0\}$ of the group $T_n,$ 
then there exists a constant $C'$ such that the following estimate is satisfied for any $\omega$,
   $$\frac{N_n(\omega)}{C'} \leq | \omega |_n \leq C'N_n(\omega)$$
 \label{lem1}  
\end{lemma}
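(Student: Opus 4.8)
The plan is to establish the two inequalities separately: the lower bound $N_n(\omega)\le C'|\omega|_n$ by a subadditivity argument on the caret count, and the upper bound $|\omega|_n\le C'N_n(\omega)$ by means of the generalised $\mathbf{pcq}$ factorisation.

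For the first inequality I would begin by showing that $N_n$ is subadditive, i.e. $N_n(gh)\le N_n(g)+N_n(h)$ for all $g,h\in T_n$. Given reduced tree pairs for $g$ and $h$, one composes them by passing to the least common refinement of the target subdivision of $g$ and the source subdivision of $h$: since each $n$-caret contributes exactly $n-1$ breakpoints and the union of two such subdivisions is again of this form, that common refinement has at most $N_n(g)+N_n(h)$ carets, and pulling it back through $g$ and pushing it forward through $h$ gives trees with the same caret count. Hence the resulting (possibly unreduced) tree pair for $gh$ has at most $N_n(g)+N_n(h)$ carets, and passing to reduced form only decreases this. Since each of the finitely many generators $x_0,\dots,x_{n-1},c_0$ is represented by a fixed tree pair with at most some number $M$ of carets, writing $\omega$ as a word of length $|\omega|_n$ and iterating subadditivity yields $N_n(\omega)\le M|\omega|_n$.

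For the upper bound, set $N:=N_n(\omega)$ and use the generalised $\mathbf{pcq}$ factorisation $\omega=\mathbf p\,c_{N-1}^{j}\,\mathbf q$, where $\mathbf p,\mathbf q\in F_n$ have tree pairs $(\mathrm{T}_{n(+)},\mathrm{id},\mathrm{R})$ and $(\mathrm{R},\mathrm{id},\mathrm{T}_{n(-)})$ with $\mathrm{R}$ the all-right tree on $N$ carets, and $1\le j\le N(n-1)+1$. It suffices to bound $|\mathbf p|_n$, $|c_{N-1}^{j}|_n$ and $|\mathbf q|_n$ each by $O(N)$. For $\mathbf p$ (and likewise $\mathbf q^{-1}$), write it in positive normal form $x_{i_1}^{r_1}\cdots x_{i_y}^{r_y}$ with $i_1<\cdots<i_y$; the tree structure forces $i_y+\sum_k r_k=O(N)$. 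Now rewrite each $x_{i_k}$ as a conjugate $x_0^{-a_k}x_{s_k}x_0^{a_k}$ of one of $x_0,\dots,x_{n-1}$ by a power of $x_0$ with $a_k=O(i_k)$ and $a_k$ non-decreasing in $k$; using $(x_0^{-a}x_sx_0^{a})^{r}=x_0^{-a}x_s^{r}x_0^{a}$, the conjugating powers between consecutive blocks telescope, leaving the word $x_0^{-a_1}x_{s_1}^{r_1}x_0^{a_1-a_2}\cdots x_{s_y}^{r_y}x_0^{a_y}$ of length $O(a_y+\sum_k r_k)=O(N)$ in $\{x_0,\dots,x_{n-1}\}$, so $|\mathbf p|_n=O(N)$ as well since $F_n\le T_n$. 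Finally $|c_{N-1}^{j}|_n=O(N)$ uniformly in $j$ follows from the standard relations among the cyclic elements of $T_n$: $c_k$ is obtained from $c_{k-1}$ by a single multiplication by an $x$-generator, and each power $c_{N-1}^{j}$ has a direct expression of length $O(N)$ as the realisation of a dyadic rotation of the circle, so one never pays the $\Theta(jN)$ cost of a literal $j$-fold power. Adding the three bounds gives $|\omega|_n=O(N)$, and taking $C'$ to be the larger of the two constants completes the argument.

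The genuinely delicate part is the upper bound. The subadditivity step is essentially bookkeeping once one checks that composing tree pairs cannot create more carets than the sum, but in the $\mathbf{pcq}$ estimate two points require real care: the telescoping bound for the positive and negative factors depends on the exact shape of the relations expressing the infinite generating set of $F_n$ over the finite one and on the \emph{linear} (rather than quadratic) total size of the positive normal form; and the uniform estimate $|c_{N-1}^{j}|_n=O(N)$ is where the cyclic structure of $T_n$ must be used essentially, since $c_{N-1}$ has order $N(n-1)+1=\Theta(N)$ and controlling all of its powers at once by an $O(N)$ bound cannot be done letter by letter.
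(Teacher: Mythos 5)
This lemma is quoted in the paper from \cite{MR2452818} without proof, so there is no in-paper argument to compare against; judged on its own, your proposal is correct and reproduces the standard proof from that reference. The lower bound via subadditivity of the caret count under composition of tree pairs is fine, and the upper bound via the $\mathbf{pcq}$ factorisation, the linear size $i_y+\sum_k r_k=O(N)$ of the positive normal form, and the telescoping rewriting $x_i=x_0^{-a}x_sx_0^{a}$ over the finite generating set is exactly the Burillo--Cleary--Stein--Taback-style argument. The one place you should not leave at the level of an assertion is the uniform bound $|c_{N-1}^{j}|_n=O(N)$: saying that $c_{N-1}^{j}$ ``is a dyadic rotation'' is not quite accurate (it is the cyclic shift of the leaves of the all-right tree, not a rigid rotation), and the clean way to get the bound is to factor the shift by $j$ of the $L=N(n-1)+1$ leaves as $u\,c_0^{\varepsilon}\,v$ with $u,v\in F_n$ represented by trees with $O(N)$ carets, and then invoke your $F_n$ estimate on $u$ and $v$. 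With that step made explicit the proof is complete.
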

Thus the word length can also be estimated by the number of leaves 
in each tree of the unique tree pair representation.
%inequalities.\cite{}
%pcq picture
\subsubsection{The definition of $V_n$ from a logic viewpoint}
%complexcity%%%%%%%%%%%%%%%%%%%%%%%%%% the groups of Richard Thompson and complexity
For group $V,$ 
the word length of the group elements are proved to have a lower bound comparable 
with the number of leaves and the number of carets in Birget \cite{MR2104771} 
in a slightly more combinatorial description. 
We follow the method from \cite{MR2104771} 
and generalise the argument to $V_n$ 
which requires the definition of the Thompson groups through coding theory and 
this, in fact, coincides with the original definition of Thompson's groups.

We briefly set up the foundation as follows.
Let $A$ be a finite alphabet, 
given any two elements $u, v$ in the set $A^{\ast}$ of the words, 
we denote by $u \cdot v$ the concatenation 
and say that 
$u$ is a prefix of $v$ if and only if $v = ux$ for some $x \in A^{\ast}.$
This provides a partial order for the words.
A \textit{prefix code} over $A$ is defined to be a subset of $A^{\ast}$ 
such that no elements of this set is the strict prefix of any other elements of the set. %prefix code
For $A,$ 
we define the \textit{right ideal} of $A^{\ast}$ to be $R \subseteq A^{\ast}$ 
such that $R \cdot A^{\ast} \subseteq R.$ %right ideal 
A set $\Gamma \subseteq R$ is called a \textit{right ideal generator} of $R$ 
if and only if $R = \Gamma \cdot A^{\ast}$. %right ideal generator 
$R$ is called \textit{essential} if and only if 
$R$ has a nonempty intersection with every other right ideal of $A^{\ast}.$ %essential right ideal
A \textit{right ideal homomorphism} of $A^{\ast} $ is a function $\phi:R_1\rightarrow R_2$ 
such that $R_1$ and $R_2$ are nonempty right ideals of $A^{\ast}$ 
and such that for all $u \in R_1$ and for all $x \in A^{\ast} : \phi (u) \cdot x = \phi(ux)$ %right ideal homomorphism 
and it is a bijective right ideal homomorphism. % right ideal isomorphism
An \textit{extension} of a right ideal homomorphism 
$\phi:R_1 \rightarrow R_2$ is a right ideal homomorphism 
$\Phi : J_1 \rightarrow J_2$ where $J_1, J_2 $ are right ideals 
such that $R_i \subseteq J_i$ for $i \in \{1, 2\}$ and $\Phi$ agrees with $\phi$ on $R_1.$ %extension
A right ideal homomorphism is said to be \textit{maximal} if and only if it has no strict extension in $A^{\ast}.$ %maximal
Then we have all the basic ingredients for defining $V$ and it's generalisation alternatively.

% maximal isomorphism / extension
\begin{definition}[\cite{MR2104771}]
$V$ is the partial action group on the set of words $\{a, b\}^{\ast}$ 
consisting of all maximal isomorphisms 
between the finitely generated essential right ideals of $\{a, b\}^{\ast}.$ 
The group multiplication of two elements of $V$ is 
the maximum extension of the composition of two elements of the group.
\end{definition}
Similarly, we define $V_n$.
\begin{definition}[The generalised Thompson group $V_n$]
$V_n$ is defined as 
the partial action group on the set of words $\{a_0, a_1, \cdots, a_{n-1} \}^{\ast}$ 
consisting of all maximal isomorphisms 
between the finitely generated essential right ideals of $\{a_0, a_1, \cdots, a_{n-1}\}^{\ast}.$ 
The group multiplication of two elements of $V_n$ is defined similarly as 
the maximum extension of the composition of two elements of the group.\end{definition}

Need to mention that $F_n$ can be defined as the subgroup of $V_n$ 
consists of all right ideal isomorphism of $\{a_0, a_1, \cdots, a_{n-1} \}^{\ast}$ that 
preserves the dictionary order as for $F$ in \cite{birget2004groups}. 

\begin{definition}[\cite{MR2104771}]
For a right-ideal isomorphism $\phi:P_1A^{\ast} \rightarrow P_2A^{\ast}$ (a bijection), 
where $P_1$ and $P_2$ are finite maximal prefix codes, 
the restriction $P_1\rightarrow P_2$ of $\phi$ is called the table of $\phi.$ 
Define the table size $\lVert \phi \rVert$ to be $| P_1| $ $( =  | P_2 | ).$
For an element $g \in V$, 
the table size $\lVert g \rVert$ of $g$ is defined to be 
the table size of the maximally extended right-ideal isomorphism that represents $g$.
\end{definition}

\subsubsection{An estimate of the word length of elements in $V_n$ from the logic viewpoint}
% table size and tree pair representation and .............................
%%%%%%%%%%%%%%%%%%%%%%%%%%%%%%%%%%%%%%%%%%%%%%%%%%

The following lemma indicates that the table size and the word length of an element of $V$ are comparable. 
\begin{lemma}[ \cite{MR2104771}] %theorem3.8
The table size and the word size of an element $g \in V$ are related as follows: 

\begin{enumerate}

\item There are $c_{\Delta}, c'_{\Delta} > 0 $ 
(depending on the choice of finite generating set $\Delta$) 
such that for all $g \in V - \{1\}$: 
$c'_{\Delta} \lVert g \rVert \leq  | g |_{\Delta}  \leq c_{\Delta} \lVert g \rVert \cdot \log_2 \lVert g \rVert . $  

\item For almost all $g \in V$, $| g |_{\Delta} > \lVert g \rVert \cdot \log_{2 | \Delta |} \lVert g \rVert  .$

\end{enumerate} 

``Almost all" means that in the set $ \{g \in V: \lVert g \rVert = n \},$
the subset that does not satisfy the above inequality has a proportion that tends to $0$ exponentially fast as $n \mapsto \infty.$

\end{lemma}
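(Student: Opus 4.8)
This is Birget's result \cite{MR2104771}; here is the shape of the argument I would give, in three steps. For the lower bound $c'_{\Delta}\lVert g\rVert\le|g|_{\Delta}$, the plan is to use that table size is almost sub-additive under multiplication. In the prefix-code/tree-pair picture, multiplying two elements amounts to passing to a common refinement of two finite maximal prefix codes, and the internal nodes of a common refinement of two trees are exactly the union of the internal nodes of the two trees; hence a common refinement of codes of sizes $p$ and $q$ has size at most $p+q-1$, i.e.\ $\lVert gh\rVert\le\lVert g\rVert+\lVert h\rVert-1$. Since $\Delta$ is finite there is a constant $M=M(\Delta)$ bounding the table size of every generator and its inverse, so writing $g\ne 1$ as a product of $|g|_{\Delta}$ generators and iterating the inequality gives $\lVert g\rVert\le M\,|g|_{\Delta}$; take $c'_{\Delta}=1/M$.

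For the upper bound $|g|_{\Delta}\le c_{\Delta}\lVert g\rVert\log_{2}\lVert g\rVert$, which is the substantive part, I would represent $g$ by its reduced tree pair $(\mathrm{T}_{1},\pi,\mathrm{T}_{2})$ with $n:=\lVert g\rVert$ leaves in each tree, fix an auxiliary ``balanced'' tree $\mathrm{B}$ with exactly $n$ leaves all at depth $O(\log n)$, and factor $g$ (as in the generalised \textbf{pcq} factorisation, but with $\mathrm{B}$ in place of the all-right tree) as a product of three elements with tree pairs $(\mathrm{T}_{1},\mathrm{id},\mathrm{B})$, $(\mathrm{B},\pi,\mathrm{B})$ and $(\mathrm{B},\mathrm{id},\mathrm{T}_{2})$. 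The two outer factors lie in $F\le V$ and their reduced tree pairs have $O(n)$ carets, so by the caret estimate for $F$ (the $F$-analogue of Lemma~\ref{lem1}) they have word length $O(n)$ in any finite generating set of $V$. The middle factor is a permutation of the $n$ leaves of $\mathrm{B}$, and the plan is to realise it by a routing/sorting-network scheme: write $\pi$ as a composition of $O(\log n)$ ``switching layers'' on $\mathrm{B}$ (each a product of disjoint swaps of subtrees), or alternatively as a product of $O(n)$ transpositions of consecutive leaves of $\mathrm{B}$, and check that because $\mathrm{B}$ has depth $O(\log n)$ each elementary piece costs $O(\log n)$ generators (a swap of two leaves at depth $d$ of a tree becomes a bounded product after conjugation by $F$-elements of length $O(d)$). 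Summing gives $O(n\log n)$ for the middle factor, hence $|g|_{\Delta}=O(n\log n)$. This is the step I expect to be the main obstacle: the precise accounting of the cost of each layer/transposition, and thus the exact place where the combinatorics of routing networks has to be matched against the word geometry of $V$, is delicate, and for the details I would lean on \cite{MR2104771}.

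For part (2) -- that almost every $g$ with $\lVert g\rVert=n$ satisfies $|g|_{\Delta}>n\log_{2|\Delta|}n$ -- the plan is a counting argument matching the logarithm in (1) from the other side. There are $C_{n-1}$ binary trees with $n$ leaves ($C_{m}$ the $m$-th Catalan number), so $C_{n-1}^{2}\,n!$ triples $(\mathrm{T}_{1},\pi,\mathrm{T}_{2})$; such a triple fails to be reduced exactly when some ``cherry'' caret of $\mathrm{T}_{1}$ is carried order-preservingly onto a cherry caret of $\mathrm{T}_{2}$, and for fixed $\mathrm{T}_{1},\mathrm{T}_{2}$ and uniformly random $\pi\in\mathfrak{S}_{n}$ this has probability at most $k_{1}k_{2}/(n(n-1))\le(n/2)^{2}/(n(n-1))\le 1/2$, where $k_{i}$ is the number of cherries of $\mathrm{T}_{i}$. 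Hence at least half of the triples are reduced, and distinct reduced triples represent distinct elements of table size exactly $n$, so $N(n):=\#\{g\in V:\lVert g\rVert=n\}\ge\tfrac12 C_{n-1}^{2}\,n!$. On the other hand the ball of radius $L$ in $\Delta\cup\Delta^{-1}$ has at most $2(2|\Delta|)^{L}$ elements, and with $L=\lfloor n\log_{2|\Delta|}n\rfloor$ one has $(2|\Delta|)^{L}\le n^{n}$. Using $C_{n-1}=\Theta(4^{n}/n^{3/2})$ and Stirling's formula $n!=\Theta(\sqrt{n}\,(n/e)^{n})$, the proportion of $g$ with $\lVert g\rVert=n$ and $|g|_{\Delta}\le L$ is at most
\[
\frac{2(2|\Delta|)^{L}}{\tfrac12 C_{n-1}^{2}\,n!}=O\!\left(\frac{n^{n}}{(16^{n}/n^{3})\,\sqrt{n}\,(n/e)^{n}}\right)=O\!\left(\frac{n^{5/2}}{(16/e)^{n}}\right),
\]
which tends to $0$ exponentially fast since $16/e>1$. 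In particular this also shows that the logarithmic factor in (1) is not removable.
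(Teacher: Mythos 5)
The paper does not actually prove this lemma: it is imported verbatim from Birget \cite{MR2104771} and used as a black box (the author only proves the $V_n$ analogue, Lemma \ref{lem210}, and even there defers the substantive estimates back to \cite{MR2104771}). So there is no in-paper proof to compare against; what you have written is, in outline, Birget's own argument, and the overall shape is right. The lower bound via near-subadditivity of table size under composition (common refinement of two maximal prefix codes of sizes $p,q$ has size at most $p+q-1$) is correct and complete as stated. For the upper bound, your primary route (balanced tree of depth $O(\log n)$, outer $F$-factors of length $O(n)$, middle permutation realised by a routing scheme) is the right one, but note that your parenthetical alternative --- writing $\pi$ as a product of transpositions of \emph{consecutive} leaves --- cannot work: a generic permutation of $n$ letters needs $\Theta(n^2)$ adjacent transpositions, which would give $O(n^2\log n)$. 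You must use $O(n)$ arbitrary transpositions (each a conjugate of a fixed swap by an $F$-element of length $O(\log n)$) or a genuine $O(n\log n)$-comparator sorting network; the careful accounting there is exactly the content of Birget's proof, and you are right to flag it as the delicate step. Your counting argument for part (2) is sound: the probability that a uniform triple $(\mathrm{T}_1,\pi,\mathrm{T}_2)$ is non-reduced is at most $k_1k_2/(n(n-1))\le n/(4(n-1))\le 1/2$, distinct reduced triples give distinct elements of table size exactly $n$, and the Catalan/Stirling comparison against the ball of radius $n\log_{2|\Delta|}n$ does give exponential decay. This part goes beyond anything the paper records, since the paper never uses statement (2).
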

The analogue for $V_n$ is as follows.

\begin{lemma}[The word length of elements in $V_n$]
The table size and the word size of an element $g \in V_n$ are related as follows: 

There are $c_{\Delta}, c'_{\Delta} > 0 $ 
(depending on the choice of finite generating set $\Delta$) 
such that for all $g \in V - \{1\}$: 
$c'_{\Delta} \lVert g \rVert \leq  | g |_{\Delta}  \leq c_{\Delta} \lVert g \rVert \cdot \log_2 \lVert g \rVert . $  
%\item For almost all $g \in V$, $\mid g \mid_{\Delta} > \lVert g \rVert \cdot \log_{2\mid \Delta \mid} \lVert g \rVert  .$
\label{lem210}
\end{lemma}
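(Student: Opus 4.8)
The plan is to transcribe Birget's proof of the preceding lemma for $V$ (see \cite{MR2104771}) to the $n$-letter alphabet $A=\{a_0,\dots,a_{n-1}\}$, tracking the places where the arity $n$ enters. Throughout, write $N=\lVert g\rVert$ and fix the finite generating set $\Delta$.

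\emph{Lower bound.} Set $M=\max_{s\in\Delta}\lVert s\rVert$. The crux is sub-additivity of table size: if $g,h\in V_n$, then $\lVert gh\rVert\le \lVert g\rVert+\lVert h\rVert-1$. Indeed, composing the maximally extended tables $P_1\to P_2$ and $Q_1\to Q_2$ forces one to refine $P_2$ and $Q_1$ to their common refinement $P_2\vee Q_1$; since an $n$-caret raises the leaf count by $n-1$, the caret set of $P_2\vee Q_1$ is the union of the two caret sets, so $|P_2\vee Q_1|-1\le(|P_2|-1)+(|Q_1|-1)$. Pulling this refinement back through the first table and then passing to the maximal extension of the composite can only decrease the table size. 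Iterating along a geodesic word $g=s_1\cdots s_k$ with $k=|g|_\Delta$ gives $\lVert g\rVert\le (M-1)k+1\le M|g|_\Delta$ for $g\neq 1$, so $c'_\Delta=1/M$ works.

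\emph{Upper bound: reduction to a permutation.} Represent $g$ by its maximally extended table, a bijection between finite maximal prefix codes $P_1\to P_2$ with $|P_1|=|P_2|=N$ (so $N\equiv 1\pmod{n-1}$). Let $C_N$ be a balanced maximal prefix code of size $N$, chosen so that every word of $C_N$ has length $O(\log_n N)$; such a code exists for every admissible $N$, obtained from a complete tree of depth $\lceil\log_n N\rceil$ by finitely many un-expansions. Pick $f_1\in F_n$ with table $C_N\to P_1$ and $f_2\in F_n$ with table $P_2\to C_N$ (these exist because, as recalled above, $F_n$ is exactly the group of order-preserving right-ideal isomorphisms and the codes have equal size), and let $\pi\in\mathfrak{S}_N$ be the leaf permutation of $C_N$ for which $g=f_1\,\pi\,f_2$; since all three tables already have $C_N$, $P_1$, resp.\ $P_2$ as domain, no further refinement occurs and this factorisation is exact, whence $|g|_\Delta\le|f_1|_\Delta+|\pi|_\Delta+|f_2|_\Delta$. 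A maximal prefix code of size $N$ corresponds to an $n$-ary tree with exactly $(N-1)/(n-1)$ carets, so the reduced tree pair of each $f_i$ has $O(N)$ carets, and by the $F_n$-analogue of Lemma~\ref{lem1} (also contained in \cite{MR1806724,MR2452818}) we get $|f_i|_\Delta=O(N)$.

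\emph{Cost of the permutation, and the main obstacle.} It remains to bound $|\pi|_\Delta$. Write $\pi$ as a product of at most $N-1$ transpositions of leaves of $C_N$, and estimate a single transposition $\tau_{u,v}$ swapping two leaves $u,v$ of $C_N$, each of depth $d=O(\log_n N)$. Here $\tau_{u,v}$ is represented by the tree pair $(T,\sigma,T)$, where $T$ is the minimal $n$-ary tree containing the paths to $u$ and $v$ — hence $T$ has $O(d)$ carets — and $\sigma$ transposes those two leaves. Conjugating a fixed ``top'' transposition (one interchanging two of the leaves $a_0,\dots,a_{n-1}$, a single element of bounded length in $\Delta$) by an element of $F_n$ with $O(d)$ carets that carries the top leaves to $u$ and $v$ realises $\tau_{u,v}$, so by the $F_n$-analogue of Lemma~\ref{lem1}, $|\tau_{u,v}|_\Delta=O(d)=O(\log N)$. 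Summing over the at most $N-1$ transpositions yields $|\pi|_\Delta=O(N\log N)$, and therefore $|g|_\Delta\le O(N)+O(N\log N)\le c_\Delta\,\lVert g\rVert\log_2\lVert g\rVert$ for a suitable $c_\Delta$. None of these estimates is deep; the only points requiring genuine care are having at hand the precise $n$-ary analogue of Lemma~\ref{lem1} for $F_n$, checking that the tree spanning two depth-$d$ branches really has only $O(d)$ carets (with a constant depending on $n$ but not on $N$), and the bookkeeping that the composite $f_1\pi f_2$ is computed in $V_n$ without any reduction, so that it equals $g$ exactly rather than a proper restriction.
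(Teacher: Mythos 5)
Your proposal follows the same overall route as the paper's proof --- Birget's factorisation $g=f_1\,\pi\,f_2$ with $f_1,f_2\in F_n$ order-preserving and $\pi$ a permutation of a fixed maximal prefix code of size $\lVert g\rVert$, plus linear control of the $F_n$ factors via caret counts --- but you are considerably more explicit on the two points the paper leaves implicit, and on one of them your version is actually the correct one to state. First, the paper's proof never argues the lower bound inside the proof (it is deferred to a one-line remark afterwards); your subadditivity estimate $\lVert gh\rVert\le\lVert g\rVert+\lVert h\rVert-1$ via the common refinement of prefix codes is the right argument and the $n$-ary caret count is handled correctly. Second, the paper identifies its factorisation with the $\textbf{pcq}$ form, whose middle trees are all-right trees of depth $\Theta(N)$; taken literally, transpositions of leaves at depth $\Theta(N)$ cost $\Theta(N)$ each and the upper bound degrades to $O(N^2)$. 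Your insistence on a \emph{balanced} code $C_N$ of depth $O(\log_n N)$ is exactly what is needed to recover Birget's $N\log N$ bound, so on this point your write-up is more faithful to the cited argument than the paper's sketch.

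One detail to patch: your realisation of a single transposition $\tau_{u,v}$ as a conjugate of a fixed top transposition by an element of $F_n$ does not quite work as stated, because an order-preserving table must send the $i$-th and $j$-th leaves of its domain code to the $i$-th and $j$-th leaves of its range code, so it cannot carry two non-adjacent leaves of the minimal tree spanning $u$ and $v$ onto two sibling top leaves. The bound you need, $|\tau_{u,v}|_{\Delta}=O(|u|+|v|)$, is true and is precisely Birget's transposition lemma (proved there by an explicit product rather than a single conjugation); the paper likewise invokes "the transpositions defined in a generalised version of the one in Birget" as a black box. Either cite that lemma directly or replace the conjugation by Birget's explicit construction; with that substitution your argument is complete.
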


\begin{proof}[Proof of Lemma \ref{lem210}]
We extend \cite{MR2104771} to find a canonical factorisation for elements in $V_n$ 
to a right-ideal automorphism and two elements of $F_n$, 
then prove the latter to have bounded word length.

%factorisation
The ideal comes naturally from Thompson's orginal definition. 
When fixing the maximal prefix code 
$S_{\lVert g \rVert }$ of some cardinality $l(n-1)+1,$ 
for an group element $g$ of $V_n$,
there exists a unique decomposition 
such that $g = \beta_g\pi_g\alpha_g$ 
where  $\alpha_g, \beta_g \in F_n$ and $\pi_g$ is a automorphism 
whose table is a permutation of the $S_{\lVert g \rVert }$.
This is because, 
by considering the group element as a maximal right-ideal isomorphism $\phi$,
$\alpha_g$ and $\beta_g$ can be uniquely defined as order-preserving bijective, 
hence being uniquely determined by the maximal prefixed codes. 
This factorisation coincides with the $\textbf{pcq}$ factorisation.

Next, we look at the word length of $\alpha_g$ and $\beta_g$. 
The linear boundedness of the word length of the group elements in $F_n$ 
can either be deduced from \cite{MR1806724} %quasi-isometric embeddingburillo
or from a similar argument as in \cite{MR2104771}.  
They are essentially the same estimate.

%% The word length of right ideal automorphism
Finally, we focus only on $\pi_g$, 
which can be regarded as a table of some automorphisms 
from the finite maximal prefix code $S_{\lVert g \rVert} $ to itself 
and hence is a permutation with bounded number of transpositions.
The transpositions defined in a generalised version of the one in \cite{MR2104771} 
give exactly the process of the permutation of the labels 
on the two identical trees in the tree pair representation.
%%%the connection between birget's result and the pcq factorisation
\end{proof}%%%table size and the number of the carets 
By the above argument, 
we also have a lower bound for estimating the word length of the elements in $V_n$.
%% complexity%the tree pair representation and the corresponding branch....interpretation and the word length 

%%without changing the linearity of the word length of the path 

\section{Divergence functions of the generalised Thompson's groups (the Brown-Thompson groups)}

\subsection{The Divergence function}

For metric spaces, 
there are several different types of divergence functions\cite{ MR1302334, Dru_u_2009,MR3978542,MR3474592}.
The intuitive idea of divergence property is to understand 
to what extend two geodesics starting from the same point in a metric space go away from each other.
Here we are taking the convention from \cite{Dru_u_2009}.

%%%%%difference types

The divergence of the group is narrowed down 
from the divergence of the metric spaces 
by considering the divergence function of the Cayley graph of the groups 
with respect to the standard finite generating set defined below in Subsection \ref{stg}. 
From now on, 
we use $| \cdot |$ as the notation for the word length of the group elements.

\begin{definition}[\cite{Dru_u_2009}]
Define $div(a,b,c; \delta)$ as the infimum of the length of the paths 
for a geodesic metric space $(X, d )$ connecting $a$, $b$ 
and avoiding the ball $B(c, \delta r)$ centered at $c$ with radius $r$, 
where $a$, $b$, $c$ are points taken arbitrarily in the metric space $X,$
and $r$ is the minimum of the distances between $c$ and $a$ or between $b$ and $c.$
\end{definition}

\begin{definition}[Divergence function \cite{Dru_u_2009}]
The divergence function $Div(m; \delta)$ is the maximum of all $div(a,b,c; \delta)$ 
such that the pairs $(a, b)$ are within distance $m$,
 i.e. $dist(a, b) \leq m.$
\end{definition}
The goal here is to find the shortest path between two points at distance $n$ from the identity 
in the Cayley graph of the generalised Thompson groups 
and compare this path to some functions.

%Theorem and proof
\subsection{Divergence property of $F_n, T_n, V_n$ }
As is in the Subsection \ref{subsec22}, 
we denote by $\mathcal{G}$ any of the groups $F_n$, $T_n$, $V_n.$
For an element $g \in \mathcal{G}$, 
we denote by $\mathcal{N}_{\mathcal{G}}(g)$ the number of leaves in each of the trees in the reduced tree pairs representing $g$ in group $\mathcal{G}$ or $\mathcal{N}(\cdot)$ when the group is clear. 

\begin{theorem}
There exist constants $\delta, D > 0$ such that the following holds. 
Let $g_1,g_2 \in \mathcal{G} $ be two elements with $\mathcal{N}(\cdot) \geq 3n-2.$ 
Then there is a path of length at most $D(|g_1|+|g_2|)$ in the Cayley graph $\Gamma = Cay(\mathcal{G}, X)$ 
which avoids a $ ( \delta\min\{| g_1| , | g_2| \} )$-neighbourhood of the identity 
and which has initial vertex $g_1$ and terminal vertex $g_2$.
\label{thm1}
\end{theorem}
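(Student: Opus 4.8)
The plan is to mimic the Golan--Sapir strategy for $F$, $T$, $V$ (as in \cite{MR3978542}), constructing an explicit path from $g_1$ to $g_2$ that stays far from the identity by routing it through elements whose support is confined to a small $n$-adic subinterval. First I would fix an $n$-adic interval, say the one corresponding to a single leaf $\omega'$ deep enough in an all-right tree, and use the $g_{[\omega']}$ notation from Definition~\ref{branch} to embed a copy of $\mathcal{G}$ (indeed of the whole group, since $V_n$, $T_n$, $F_n$ each contain a copy of themselves supported on any $n$-adic subinterval with at least $n-1$ free leaves around it, which is why the hypothesis $\mathcal{N}(\cdot)\geq 3n-2$ appears) whose elements all have word length bounded linearly in the word length of the original element, by Lemma~\ref{lem1} and Lemma~\ref{lem210} together with the caret-counting estimates: attaching a tree $\mathrm{G}_\pm$ at a single leaf changes the number of carets only additively by the caret count of $g$, so $|g_{[\omega']}| \leq |g| + O(n)$.

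The path itself I would build in three stages. Stage one: travel from $g_1$ to $g_1 h_1$, where $h_1$ is chosen so that $g_1 h_1$ is supported (as a tree pair) entirely inside the fixed small interval $I$; concretely $h_1 = g_1^{-1}\cdot(\text{a copy of }g_1\text{ pushed into }I)$, but more cleanly one takes $h_1$ to be $g_1^{-1}$ composed with nothing and instead moves via the ``conjugated'' element. The cleaner description: let $\phi$ be a fixed element of $\mathcal{G}$ (of bounded length) that maps the whole interval $[0,1]$ onto $I$ in the sense of branches; then $\phi g_1 \phi^{-1}$ is supported in $I$. Stage one is the geodesic-ish segment from $g_1$ to $\phi g_1 \phi^{-1} g_1$? — rather, I would go $g_1 \rightsquigarrow g_1 \cdot (g_1^{-1}\phi g_1\phi^{-1})\cdot(\text{something})$; the point is only that each of these multiplicands has length $O(|g_1|)$ and their partial products never come near the identity because they all contain, as a ``visible'' sub-tree-pair on the complement of $I$, a large reduced chunk of $g_1$ forcing $\mathcal{N}\geq \delta|g_1|$ and hence word length $\geq \delta'\min\{|g_1|,|g_2|\}$ by Lemmas~\ref{lem1} and \ref{lem210}. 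Stage two: once both endpoints have been moved to elements supported inside $I$, connect the two ``small'' elements by going through a fixed bounded-length element that is the identity on $I$ but rigid elsewhere — this costs $O(1)$ and never approaches identity because of the rigid part. Stage three: reverse a stage-one-type path to arrive at $g_2$.

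The key technical lemma I expect to isolate is a \emph{lower bound on word length from a sub-tree-pair on a fixed subinterval}: if a reduced tree pair for an element $w$ restricts, on the complement of $I$, to (a subdivision refining) the reduced tree pair of $g_1$, then $\mathcal{N}_{\mathcal{G}}(w) \geq \mathcal{N}_{\mathcal{G}}(g_1)$, hence $|w| \geq \frac{1}{C'}\mathcal{N}_{\mathcal{G}}(g_1) \geq \frac{1}{(C')^2}|g_1| \geq \frac{1}{(C')^2}\min\{|g_1|,|g_2|\}$; choosing $\delta < \frac{1}{(C')^2}$ does it. For $V_n$ one must be slightly more careful since the word-length/leaf-count relation is only up to a $\log$ factor on one side (Lemma~\ref{lem210}); but the lower bound $c'_\Delta\lVert w\rVert \leq |w|$ is still linear, and $\lVert w\rVert \geq \lVert g_1\rVert$ under the restriction hypothesis, so the argument goes through with $\delta$ depending on $c'_\Delta$ and the constant relating $\lVert g_1\rVert$ to $|g_1|$.

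The main obstacle, I expect, is verifying that along the explicit path the partial products really do retain a large ``rigid'' reduced sub-tree-pair outside $I$ — that is, controlling cancellation. Multiplying tree pairs can cause massive reduction, so one must choose the intermediate elements (the $h_i$ and the fixed conjugators) so that the part of the tree pair living on the complement of $I$ is \emph{never} simplified: the standard trick is to make the conjugators act as the identity on the complement of $I$ up to a bounded-depth rigid pattern, and to keep one distinguished leaf (or a bounded cluster of leaves) on which every partial product is forced to have a deep, non-reducible caret structure. Once that rigidity is nailed down the length bounds are just the triangle inequality plus $|g_{[\omega']}| \leq |g| + O(n)$, and the total is $\leq D(|g_1|+|g_2|)$ with $D$ absorbing the $C'$, $c_\Delta$, $c'_\Delta$ constants and the bounded length of the fixed conjugators. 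I would also double-check the edge case where $\min\{|g_1|,|g_2|\}$ is small — but then $\delta\min\{|g_1|,|g_2|\}$ is a small radius and almost any path works, so the interesting regime is genuinely the large one handled above.
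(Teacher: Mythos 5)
Your overall strategy is the right one and is in the same Golan--Sapir family as the paper's: push each $g_i$ onto an element supported in a fixed subinterval, exploit the additivity of leaf counts under disjoint-support products (your ``key technical lemma'' is essentially the paper's Lemma~\ref{lem27} combined with Proposition~\ref{prop1}), and use the linear comparability of word length and leaf number. However, there is a genuine gap in your Stage two. You propose to connect the two pushed-in copies while a \emph{fixed bounded-length} element $z$, supported on the complement of $I$, sits there as a ``rigid part''. This cannot work: the ball to be avoided has radius $\delta\min\{|g_1|,|g_2|\}$, which grows with the $g_i$, while $|z|=O(1)$. As you interpolate from the pushed-in copy of $g_1$ to the pushed-in copy of $g_2$ inside $I$, the $I$-supported factor of the partial product must pass near (indeed through) the identity of the embedded copy, at which point the whole partial product is within $O(1)$ of $z$ and hence inside the forbidden ball for large $|g_i|$. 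The rigid element parked outside $I$ must itself have length proportional to $\min\{|g_1|,|g_2|\}$; this is exactly why the paper's Proposition~\ref{prop3.9} introduces $\omega_4=x_0^{Q|h|}x_{n-1}^{-1}x_0^{-Q|h|+1}$ with $Q$ a large multiple of the relevant constants, and why the proof of the theorem then joins the two resulting markers by the explicit word $p$ along the $x_0$-direction, every prefix of which carries a power of $x_0$ large enough to force $\geq cQ\min\{|g_1|,|g_2|\}$ carets.

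A second, smaller but still real, issue is that Stage one is left at the level of ``I would go $g_1\rightsquigarrow g_1\cdot(g_1^{-1}\phi g_1\phi^{-1})\cdot(\text{something})$'' and you explicitly defer the cancellation control to ``the main obstacle''. That obstacle is where the content of the proof lives: one must exhibit a concrete word ($\omega_1\omega_2\omega_3$ in the paper, with $\omega_2=x_0^{-M\mathcal{N}(h_1)}x_{n-1}x_0^{M\mathcal{N}(h_1)}$ and $\omega_3$ the symmetrised tree pair) and verify, prefix by prefix, that $|h\omega'|\geq\delta|h|$ using Lemma~\ref{lem25} and Corollary~\ref{cor37}; a generic conjugating path does not have this property, since multiplying tree pairs can cancel arbitrarily much. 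Also note that for $V_n$ your bound $|g_{[\omega']}|\leq|g|+O(n)$ does not follow from leaf counts alone (Proposition~\ref{prop1} gives only a lower bound there); one should instead push a \emph{word} for $g$ into $I$ generator by generator, which gives $|g_{[\omega']}|\leq K|g|$ for a constant $K$. With the long marker element replacing your bounded $z$ and the prefix-by-prefix verification supplied, your outline becomes the paper's proof.
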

%%%%%%% added
We will prove Theorem \ref{thm1} over the course of this section.
\begin{corollary}
The generalised Thompson's groups $F_n$, $T_n$ and $V_n$ have linear divergence.
\label{thm2}
\end{corollary}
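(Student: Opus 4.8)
The final statement (Corollary~\ref{thm2}) follows from Theorem~\ref{thm1} by a short case split, so my plan is to prove the latter by transporting the Golan--Sapir argument \cite{MR3978542} to the $n$-ary groups, using throughout that $|g|\asymp\mathcal N(g)$ (Lemmas~\ref{lem1} and~\ref{lem210}). For the reduction: left multiplication is an isometry of $\Gamma=Cay(\mathcal G,X)$, so in bounding $div(a,b,c;\delta)$ I may assume $c=1$; set $g_1=c^{-1}a$, $g_2=c^{-1}b$, $r=\min\{|g_1|,|g_2|\}$, and note $\bigl||g_1|-|g_2|\bigr|\le d(g_1,g_2)\le m$. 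If $r>2m$, every vertex of a geodesic from $g_1$ to $g_2$ lies at distance $\ge|g_1|-m>r/2$ from $1$, so that geodesic has length $\le m$ and avoids $B(1,\tfrac12 r)$. If $r\le 2m$, then $|g_1|+|g_2|\le 2r+m=O(m)$, and (once $\mathcal N(g_i)\ge 3n-2$; the finitely many smaller elements form a bounded ball and are dispatched by a bounded-cost correction, $\Gamma$ minus a bounded ball being connected) Theorem~\ref{thm1} yields a path of length $O(|g_1|+|g_2|)=O(m)$ avoiding $B(1,\delta r)$. Taking the smaller $\delta$ gives $Div(m;\delta)=O(m)$, i.e. linear divergence.

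For Theorem~\ref{thm1}, since $|g|\asymp\mathcal N(g)$ a path avoids $B(1,\delta r)$ exactly when the reduced tree pairs encountered along it keep $\gtrsim\delta r$ leaves throughout. I would route the path through \emph{standard} elements: fix once and for all a family $(h_k)_{k\ge 1}$ with $\mathcal N(h_k)$ increasing and $\asymp k$, with $h_k$ and $h_{k+1}$ differing by $O(1)$ generators, and put $r_i:=\mathcal N(g_i)$. The path has three stages: (i) a funnel from $g_i$ to $h_{r_i}$; (ii) the segment $h_{r_1},h_{r_1\pm 1},\dots,h_{r_2}$; (iii) concatenation. Stage (ii) has length $O(|r_1-r_2|)=O(|g_1|+|g_2|)$ and stays outside $B(1,\delta r)$, since every $h_k$ with $k\ge\min\{r_1,r_2\}$ has $\gtrsim\min\{r_1,r_2\}\asymp\min\{|g_1|,|g_2|\}$ leaves; stage (iii) is free.

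The crux, and the main obstacle, is stage~(i): steering an arbitrary $g$ to the standard element of its own complexity without ever dropping below a fixed fraction of its carets. Here I would use the branch--attachment calculus of Subsection~\ref{subsec22}. Writing $g$ in generalised $\textbf{pcq}$ form, one sees that after a bounded reshuffling near the root the diagram of $g$ (or of an intermediate element) acquires a branch $\ell_\omega(\mathrm T_+)\mapsto\ell_{\omega'}(\mathrm T_-)$ whose two leaves sit at depth $\asymp r$ --- it is precisely the hypothesis $\mathcal N(g)\ge 3n-2$ (three carets' worth of leaves) that leaves room for such a deep, rigid branch. One then multiplies only by generators that rearrange the complementary shallow part of the tree pair, the notation $g_{[\ell_o(\mathrm T_+)\mapsto\ell_{\sigma(o)}(\mathrm T_-)]}$ being exactly the bookkeeping device for this, carrying the diagram toward $h_r$ in $O(r)=O(|g|)$ elementary moves while the deep subtree --- hence $\asymp r$ carets --- survives at every step. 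This is run first for $F_n$, where there is no permutation to transport, then extended to $T_n$ by additionally moving the cyclic label $c_{i-1}^{j}$, and to $V_n$ by realising $\sigma\in\mathfrak S_{i(n-1)+1}$ as a product of $O(r)$ transpositions as in the proof of Lemma~\ref{lem210}. The delicate point --- and the reason the ``$\ge 3n-2$'' hypothesis appears --- is verifying that the intermediate \emph{reduced} tree pairs really do retain $\asymp r$ carets rather than collapsing under cancellation.
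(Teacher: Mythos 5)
Your proposal is correct and takes essentially the same route as the paper: the corollary is deduced from Theorem~\ref{thm1}, and your case split (translating $c$ to the identity, handling $r>2m$ by a geodesic and $r\le 2m$ by the theorem, with a bounded correction for the finitely many elements of small $\mathcal N$) is just the standard reduction the paper leaves implicit in its one-line proof. Your sketch of Theorem~\ref{thm1} itself, routing through a family of standard elements indexed by word length, also matches the paper's strategy in Proposition~\ref{prop3.9} of funnelling $g$ to the element $x_0^{Q|g|}x_{n-1}^{-1}x_0^{-Q|g|+1}$ and then bridging between two such elements.
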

\begin{proof} 
This is a direct consequence of Theorem \ref{thm1}.
\end{proof}
%\subsection{The Proof}
The next Proposition which can be regarded as an analogue of \cite[Prop2.1]{MR3978542} 
can either be proved by the estimate on the number of carets as in 
\cite{MR3978542, MR2452818, Sheng:2018aa}% pcqform, 
or using the estimate by computation in \cite{MR2104771} 
and the analogues of \cite{MR2104771} in the discussion above.

Let $\mathcal{A}_n = \{ x_0, x_1, \cdots, x_{n-1}\}$, 
$\mathcal{B}_n = \{ x_0, x_1, \cdots, x_{n-1}, c_0\} $, 
$\mathcal{C}_n = \{ x_0, x_1, \cdots, x_{n-1}, c_0, \pi_0\}$  %%%%%%%% torsion elements of V_n
be the \textit{standard generating sets} of $F_n$, $T_n$, $V_n$, respectively, 
$\mathcal{N}_{\mathcal{G}}(g)$ be the number of leaves as above.
\label{stg}

\begin{proposition}[]
There exist constants $ 0 < c < 1$ and $C > 1$ such that the following holds
\begin{enumerate}
\item For every $g \in F_n$ we have $c \mathcal{N}_{F_n}(g) \leq | g |_{\mathcal{A}} \leq C \mathcal{N}_{F_n}(g)$.
\item For every $g \in T_n$ we have $c \mathcal{N}_{T_n}(g) \leq | g |_{\mathcal{B}} \leq C \mathcal{N}_{T_n}(g)$.
\item For every $g \in V_n$ we have $c \mathcal{N}_{V_n}(g) \leq | g |_{\mathcal{C}} $. %%%%the lower bound 
\end{enumerate}
\label{prop1}
\end{proposition}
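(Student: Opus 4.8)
The plan is to deduce the three estimates from the caret-counting results already recorded — Lemma~\ref{lem1} and its stated $F_n$-analogue for the upper bounds in (1) and (2), and the elementary geometry of tree-pair multiplication (or Lemma~\ref{lem210}) for the lower bounds — together with the elementary dictionary between the number of carets and the number of leaves of an $n$-ary tree. Write $N_n(g)$ for the common number of carets of the two trees in the reduced tree pair of $g$. Adjoining one caret to an $n$-ary tree removes one leaf and creates $n$ new ones, a net change of $n-1$, so by induction a tree with $k$ carets has exactly $k(n-1)+1$ leaves; hence $\mathcal{N}_{\mathcal{G}}(g)=(n-1)N_n(g)+1$ and in particular $N_n(g)\le \mathcal{N}_{\mathcal{G}}(g)\le n\,N_n(g)$. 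Since $n$ is fixed throughout, any comparison phrased in carets is equivalent, up to a multiplicative constant depending only on $n$, to the same comparison phrased in leaves.

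For the upper bounds in (1) and (2) I would invoke the caret estimates of \cite{MR1806724, MR2452818, Sheng:2018aa}: the generalised \textbf{pcq} factorisation expresses $g$ as $\textbf{p}\sigma\textbf{q}$ with $|\textbf{p}|$ and $|\textbf{q}|$ (in the infinite generating set of $F_n$) and $|\sigma|$ each linearly bounded by $N_n(g)$, while rewriting the infinitely many $x_i$ in terms of the finite set $\{x_0,\dots,x_{n-1}\}$ distorts the word length only by a bounded factor. This is exactly Lemma~\ref{lem1} for $T_n$ (giving $|g|_{\mathcal{B}}\le C'N_n(g)$, hence $|g|_{\mathcal{B}}\le C\,\mathcal{N}_{T_n}(g)$ via the dictionary) and its $F_n$-analogue for $\mathcal{A}_n$ (giving $|g|_{\mathcal{A}}\le C\,\mathcal{N}_{F_n}(g)$).

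For the lower bounds I would argue uniformly for all three groups. Composing and then reducing tree pairs shows that right-multiplying $g$ by a single standard generator $s$ changes $N_n$ by at most $N_n(s)$: the coarsest common refinement $E$ of two $n$-ary trees with $k_1$ and $k_2$ carets has $\mathrm{internal}(E)=\mathrm{internal}(T_1)\cup\mathrm{internal}(T_2)$, hence at most $k_1+k_2$ carets, and passing to the reduced tree pair only decreases the count. As there are finitely many standard generators there is a constant $M=M(n)$ with $|\mathcal{N}(gs)-\mathcal{N}(g)|\le M$ for every generator $s$. Since the identity has a single leaf and every $g\ne 1$ has $\mathcal{N}(g)=k(n-1)+1\ge n\ge 2$, walking along a geodesic word for $g$ yields $|g|\ge(\mathcal{N}(g)-1)/M\ge \mathcal{N}(g)/(2M)$. (For $V_n$ this also follows from Lemma~\ref{lem210} once one identifies the table size with the leaf count: the domain prefix code $P_1$ of the maximally extended right-ideal isomorphism representing $g$ is exactly the set of leaf labels of the source tree of the reduced tree pair, so $\lVert g\rVert=|P_1|=\mathcal{N}_{V_n}(g)$, and then $|g|_{\mathcal{C}}\ge c'_{\mathcal{C}}\lVert g\rVert$.) Taking $c$ the minimum and $C$ the maximum of the finitely many constants obtained above finishes the proof; note that (3) asserts no upper bound, in keeping with the unavoidable $\log$ factor in Lemma~\ref{lem210}.

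The only points that need care — all bookkeeping rather than new ideas — are the passage between the infinite standard generating set, in which the \textbf{pcq} normal form and the counting are cleanest, and the finite sets $\mathcal{A}_n,\mathcal{B}_n,\mathcal{C}_n$ (which is why one quotes \cite{MR1806724} for the $F_n$ upper bound rather than reproving it), and the identification $\lVert g\rVert=\mathcal{N}_{V_n}(g)$ of table size with leaf count. Neither is hard, but both have to be checked to be sure that all constants can be chosen independently of $g$.
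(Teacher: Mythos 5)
Your proposal is correct and follows essentially the same route as the paper: the caret/leaf dictionary plus the quoted caret estimates (Lemma~\ref{lem1} and its $F_n$ analogue) for the two-sided bounds in (1)--(2), and the Birget-style table-size comparison of Lemma~\ref{lem210} for the lower bound in (3). The only difference is that you additionally supply a self-contained lower-bound argument via the bounded change of $\mathcal{N}$ under multiplication by a generator, which the paper instead reads off directly from the cited Lemma~\ref{lem1}; this is a harmless (and slightly more explicit) variant, not a different method.
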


\begin{proof}[Proof of Proposition \ref{prop1}]
Since the number of carets and the number of the leaves of the unique tree pair 
representing an element in Brown-Thompson groups $F_n$ and $T_n$ 
are interchangeable \cite{MR1806724, Sheng:2018aa}, 
together with Lemma \ref{lem1}, 
the first two statements for the groups $F_n$ and $T_n$ are satisfied. 

A lower bound for the word length of the group elements of $V_n$ can be given 
by a similar argument as in \cite{MR2104771} 
in the proof of Lemma \ref{lem210}. 
By defining the \textit{table size} for $V_n,$
we could prove that the table size and the word length of an element in $V_n$ are related, 
and hence find a bound.
\end{proof}

%find N(gx_0) 
The next step is to find out 
how concatenating one extra generator may change the number of the carets. %%%%%%%%
%%% lemmas
\begin{lemma} 
For $g \in \mathcal{G}$ being represented 
by some reduced tree pair representation $(\mathrm{G}_{n(+)},\sigma_g, \mathrm{G}_{n(-)})$ 
with at least three carets in each tree, 
let $\bar{\ell}_{\gamma}(\mathrm{G}_{n(-)})$ denote the length of the branch of the target tree $\mathrm{G}_{n(-)},$ 
where $\gamma$ is an $n$-ary word corresponding a path from the root in $\mathrm{G}_{n(-)}.$ 
Denote by $(\mathrm{GX_i}_{n(+)},\sigma_{gx_i}, \mathrm{GX_i}_{n(-)})$  
the reduced tree pair representation of $gx_i \in \mathcal{G}.$ 
We then have 
$$\mathcal{N}(g) - (n-1) \leq  \mathcal{N}(gx_i^{\pm1}) \leq \mathcal{N}(g) + (n-1) $$
  where $i \in \{0, \cdots, n-2\}$. %the infinite generators
\begin{enumerate}
\item If $\bar{\ell}_{0}(\mathrm{G}_{n(-)}) = 1,$
then $\mathcal{N}(gx_i^{+1}) = \mathcal{N}(g) +(n-1),$ 
and $\bar{\ell}_{0}(GX_{i}^{+1}) = 1.$ 
\item If $ \bar{\ell}_{0}(\mathrm{G}_{n(-)}) \neq 1,$ 
then $\mathcal{N}(gx_i^{+1}) = \mathcal{N}(g),$ 
or $\mathcal{N}(gx_i^{+1}) = \mathcal{N}(g) - (n-1),$ 
Moreover, $\bar{\ell}_{i}(\mathrm{GX_i}_{n(-)}) =  \bar{\ell}_{i}(\mathrm{G}_{n(-)}) -1.$
%Moreover, $l_{i}(gx_{i}^{\pm1}) = l_{i}(g)  -1.$
%\item  If $ l_{n}(\mathrm{G}_{n(-)}) \neq 1,$ 
%and either $n$ or $ij$ is a strict prefix of some branch of the target tree $\mathrm{T}_{n(-)}$ of the tree pair representing 
%$g$ where $j \in \{0, \cdots, n-1 \}$ and $j \neq i ,$ %%
%then $\mathcal{N}(gx_i^{\pm1}) = \mathcal{N}(g) $ 
%and $i$ is a strict prefix of some branch of $\mathrm{T}_{n(-)}(gx_i^{\pm1}).$
\item If $\bar{\ell}_{i}(\mathrm{G}_{n(-)}) = 1$, 
then $\mathcal{N}(gx_i^{-1}) = \mathcal{N}(g) +(n-1),$
$\bar{\ell}_{j}(\mathrm{GX_i}_{n(-)}) = 1$ for $j \in \{1, \cdots, n-2 \}.$ 
\item If $ \bar{\ell}_{i}(\mathrm{G}_{n(-)}) \neq1$ 
then $\mathcal{N}(gx_i^{-1}) = \mathcal{N}(g) $ 
or $\mathcal{N}(gx_i^{\pm1}) = \mathcal{N}(g) - (n-1),$ 
$\bar{\ell}_{n}(\mathrm{GX_i}_{n(-)}) = \bar{\ell}_{n}(\mathrm{G}_{n(-)}).$
\end{enumerate}
  \label{lem25}
\end{lemma}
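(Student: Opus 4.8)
The plan is to compute a tree pair for $gx_i^{\pm1}$ directly from the tree–pair multiplication algorithm, reduce it, and then read off both the leaf count and the lengths of the relevant branches. The first step is to record the reduced tree pair $(\mathrm{X}_{i,n(+)},\mathrm{id},\mathrm{X}_{i,n(-)})$ of the generator $x_i$ for $i\in\{0,\dots,n-2\}$: each of its two trees is the root $n$-caret together with one further $n$-caret attached immediately below a single child of the root, with the attachment placed so that, when the pair is laid against the target tree $\mathrm{G}_{n(-)}$ of $g$, the branch of $\mathrm{G}_{n(-)}$ that meets this extra caret is the leftmost one (index $0$) in the case of $x_i^{+1}$ and the branch of index $i$ in the case of $x_i^{-1}$ (recall $x_i^{-1}$ is represented by the same pair with the two trees interchanged). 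Two features of this pair drive the whole argument: every $n$-caret contributes exactly $n-1$ to a leaf count, and every non-root caret of the pair sits directly below the root, so only the top two levels of carets are ever disturbed. Finally, since $x_i$ lies in the canonical copy of $F_n$ inside $\mathcal{G}$ and only rearranges the first level of carets, the permutation $\sigma_g$ of $g$ is simply transported along and is irrelevant to the leaf count; it therefore suffices to run the computation for $\mathcal{G}=F_n$, and the cases $\mathcal{G}=T_n,V_n$ follow verbatim.

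Next I would run the multiplication algorithm: to form $gx_i^{\pm1}$ one takes the common $n$-ary refinement $\mathrm{S}$ of $\mathrm{G}_{n(-)}$ and the matching tree of $x_i^{\pm1}$, pulls this refinement back through $g$ to a refinement of $\mathrm{G}_{n(+)}$, pushes it through $x_i^{\pm1}$ to the other $x_i$-tree, and then reduces. Because the $x_i$-trees carry only two carets, $\mathrm{S}$ is obtained from $\mathrm{G}_{n(-)}$ by attaching at most one $n$-caret, and the final reduction can cancel at most one $n$-caret; this already yields $\mathcal{N}(g)-(n-1)\le\mathcal{N}(gx_i^{\pm1})\le\mathcal{N}(g)+(n-1)$. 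The case analysis then pins down the exact value by inspecting the length of the branch of $\mathrm{G}_{n(-)}$ meeting the active caret of the generator, i.e.\ $\bar{\ell}_{0}(\mathrm{G}_{n(-)})$ for $x_i^{+1}$ and $\bar{\ell}_{i}(\mathrm{G}_{n(-)})$ for $x_i^{-1}$. If that branch has length $1$, then $\mathrm{G}_{n(-)}$ does not contain the two-caret configuration imposed by the generator, a caret must genuinely be added, and—using that $(\mathrm{G}_{n(+)},\sigma_g,\mathrm{G}_{n(-)})$ is reduced with at least three carets—no compensating cancellation can occur, so $\mathcal{N}$ increases by exactly $n-1$ and one checks directly that the prescribed branch of the new target again has length $1$ (cases (1) and (3)). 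If that branch has length at least $2$, then $\mathrm{G}_{n(-)}$ already contains the relevant configuration, no refinement is needed, and attaching the other $x_i$-tree to the target produces precisely one caret that may cancel against the caret of $\mathrm{G}_{n(-)}$ lying on that branch; tracking this single cancellation shows $\mathcal{N}$ either stays equal or drops by $n-1$, and that the length of the branch in question decreases by $1$ while the other branches are untouched — the ``moreover'' clauses of cases (2) and (4).

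The step I expect to be the main obstacle is the bookkeeping in the length-$\ge 2$ cases: one must verify that the single caret contributed by $x_i^{\pm1}$ either cancels cleanly against one caret of $\mathrm{G}_{n(-)}$ or not at all, with no cascade of further cancellations propagating into the interior of $\mathrm{G}_{n(-)}$, and that the reduced target has exactly the claimed branch lengths and is indeed reduced. This is precisely where the hypothesis of at least three carets (together with reducedness of the original pair) is used: it guarantees that the root region and the branches not met by $x_i^{\pm1}$ are large enough not to be destabilised by a single one-level caret move, so the reduction is purely local and the formulas hold on the nose. Everything else is a finite, explicit check of an at-most-one-caret modification, carried out separately for the two signs and the two branch-length alternatives.
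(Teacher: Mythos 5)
Your proposal is correct and follows essentially the same route as the paper: write down the two-caret tree pair of the generator $x_i^{\pm1}$, perform the tree-pair multiplication against $\mathrm{G}_{n(-)}$, and split on whether the branch of $\mathrm{G}_{n(-)}$ meeting the generator's extra caret has length $1$ to determine the change in $\mathcal{N}$ and the new branch lengths. If anything you are more explicit than the paper about the reduction bookkeeping (the paper only carries out case (1) and declares the remaining cases similar), so no further comparison is needed.
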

\begin{proof}
The proof is generalised from the arguments in \cite{MR3978542}. 
The elements $x_i \in \mathcal{G}$ where $i \in \{0, \cdots, n-2\}$ 
are the generators in $F_n$ in the finitely generating set 
with only two carets 
in each tree in the tree pair representation in Figure \ref{fig:generators}, 
i.e. $\mathcal{N}(x_i) = 2 (n-1) +1$ with the same target tree. 
The source tree of the unique tree pair representing $x_i$s has the second caret 
attached to leaves labelled $\{0, \cdots, n-1\}$, 
and has the branch $ik$ where $k \in \{0, \cdots, n-1\}$ in the source tree, respectively. 
The inverses $x_i^{-1}$s are represented by the tree pairs 
that reverse the source tree and the target tree in the pair.

% case (1)
The composition $gx_i$ can be represented 
by the multiplication of the unique tree pair representation of $g$ and $x_i.$
If $\bar{\ell}_{0}(\mathrm{G}_{n(-)}) = 1,$ 
the target tree of $g$ has a path with only one edge attached to the first leaf.
%and the source tree of $x_i$ has a caret attached to the $i$th leaf. 
Hence the product add one more $n$-caret to 
both the source and the target tree of the tree pair representing $g.$ 
Alternatively, this is indicated by the branch $\ell_{00}(G_{n(+)}(x_i)) \mapsto \ell_{0i}(G_{n(-)}(x_i)).$ 
Since the number of carets in $gx_i$ is larger than the ones in $g,$ 
the equation in $(1)$ follows.
The branch $\bar{\ell}_{0}(GX_{i{n(-)}}) = 1$, 
since there is no extra $n$-carets added to the $i$th leaf on target tree of the product. 
% case (2)
%If $\bar{\ell}_{i}(\mathrm{G}_{n(-)}) \neq 1 ,$ 
%then there is an $n$-caret attached to the $i$th leaf in the target tree of 
%the unique tree pair representation of $g$ already, 
%multiplying $x_i$ does effects the total number of the $n$-carets in the reduced resulted tree pair. 
%When there is an $n$-caret attached to the $n$th leaf of the top caret of the target tree
%of the tree pair representing $g$, 
%then the number of carets reduce one by multiplying $x_i.$
% case (3)
%When $n$ is a strict prefix of some branch of the target tree $\mathrm{T}_{n(-)}$ 
%of the tree pair representing $g,$ 
%the number of the carets of the tree pair representing $gx_i$ does not increase.  
%When $ik$ is a strict prefix of some branch of the target tree $\mathrm{T}_{n(-)}$ 
%of the tree pair representing $g$, 
%the branch $\ell_{ik}{(\mathrm{T}_{n(+)} (x_i))} \mapsto \ell_{i}{(\mathrm{T}_{n(-)}(x_i))}$ 
%takes the path on the tree $ik$ to $i.$ 
%
This proves $(1),$ 
the arguments for the rest three cases are similar.
\end{proof}

\begin{corollary}
Let $g \in \mathcal{G}$ be any element that can be represented by 
a unique tree pair representation 
with at least three carets in each tree.
Then 
$ \mathcal{N}(g) < \mathcal{N}(gx_i^{+ m}) \leq \mathcal{N}(g) +m(n-1)-1$
when $\bar{\ell}_{0}(G_{n(-)}^{+1}) = 1$ 
and $\bar{\ell}_{0^m}(GX_{in(-)}^{+m}) = m+1;$ 
$ \mathcal{N}(g) < \mathcal{N}(gx_i^{-m}) \leq \mathcal{N}(g) +m(n-1)-1 $
when $\bar{\ell}_{i}(G_{n(-)}^{+1}) = 1.$
and $\bar{\ell}_{i0^{m-1}}(GX_{in(-)}^{-m})$ becomes $m+1.$
%$\mathcal{N}(gx_i^{\pm m}) \geq \mathcal{N}(g) +m(n-1)-2(\bar{\ell}_{n}(\mathrm{G}_{n(-)})-1)(n-1) 
%= (m-2(\bar{\ell}_{n}(G_{n(-)})-1))(n-1)$ when $\bar{\ell}_{i}(\mathrm{G}_{n(-)}) = 1$, 
%then $\mathcal{N}(gx_i^{+1}) = \mathcal{N}(g) +(n-1),$   
%Moreover, 
%if either $0$ or $0k$ is a prefix of some branch of the target tree $\mathrm{G}_{n(-)}(g),$ 
%then $\mathcal{N}(gx_i^{m}) \geq \max\{ \mathcal{N}(g), \mathcal{N}(g) + (m-1)(n-1)-1\}$ 
%when $i,k \in \{0, \cdots, n-2\}.$
\label{cor37}
\end{corollary}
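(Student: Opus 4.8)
The plan is to prove each of the two estimates by induction on $m$, using Lemma~\ref{lem25} both for the base case and for the single-step transition, and carrying the stated branch-length equalities along as part of the induction hypothesis so that Lemma~\ref{lem25} can be re-applied at every step.

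For the base case $m=1$ the positive statement follows from Lemma~\ref{lem25}(1): the hypothesis that $\bar{\ell}_{0}$ of the target tree is $1$ forces the single multiplication by $x_i$ to attach one fresh $n$-caret, which strictly increases $\mathcal{N}$, and Lemma~\ref{lem25}(1) also records the value of $\bar{\ell}$ on the distinguished leftmost branch of the new target tree; the negative statement is obtained in the same way from Lemma~\ref{lem25}(3), with the branch through the leaf $i$ in place of the leftmost branch. For the inductive step I would write $gx_i^{+m}=\big(gx_i^{+(m-1)}\big)x_i$ and set $h:=gx_i^{+(m-1)}$. By the induction hypothesis $\mathcal{N}(h)>\mathcal{N}(g)\geq 3n-2$, so $h$ is represented by a reduced tree pair with at least three carets in each tree and Lemma~\ref{lem25} applies to the product $h\,x_i$; the induction hypothesis also pins down the length of the branch of the target tree of $h$ along which the next caret is attached, so one knows exactly which clause of Lemma~\ref{lem25} governs the passage from $h$ to $h\,x_i$, hence how $\mathcal{N}$ changes and how the relevant branch word lengthens (by one letter, from $0^{m-1}$ to $0^{m}$ in the positive case, and through the leaf $i$ in the negative case). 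Summing the single-step changes over the $m$ multiplications and using that the first multiplication is strictly increasing gives $\mathcal{N}(g)<\mathcal{N}(gx_i^{+m})$; bounding each step by $n-1$ and accounting for the caret of the accumulated $x_i$-vine that is absorbed into the pre-existing part of the target tree when the product is put into reduced form gives the upper bound $\mathcal{N}(g)+m(n-1)-1$. The negative inequality for $gx_i^{-m}$ is deduced identically from Lemma~\ref{lem25}(3)--(4).

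The step I expect to be the main obstacle is precisely this bookkeeping inside the induction. One must check that passing to the \emph{reduced} tree pair of $h\,x_i$ does not trigger a cancellation that would destroy the branch $\ell_{0^{m-1}}(\cdot)\mapsto\ell_{0^{m-1}}(\cdot)$ on which re-applying Lemma~\ref{lem25} depends, and one must locate exactly where the saving of $1$ over the naive bound $m(n-1)$ originates. The most robust way to handle the latter is probably to compare $\mathcal{N}(gx_i^{+m})$ directly with $\mathcal{N}(g)+\mathcal{N}(x_i^{+m})$ and to count the leaves that get identified in the common refinement used to multiply the two tree pairs, rather than trying to track the constant one multiplication at a time.
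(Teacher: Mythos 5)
Your overall strategy---iterating Lemma \ref{lem25} one generator at a time and carrying the branch-length data as part of the induction hypothesis---is exactly what the paper intends: its entire proof of Corollary \ref{cor37} is the single sentence that the statement is a direct consequence of the preceding lemma. So there is no divergence of approach to report.

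The problem is that the obstacle you flag at the end (``where the saving of $1$ originates'') is not a bookkeeping detail that can be deferred; it is a genuine gap that this line of argument cannot close as the statements are currently written. Clause (1) of Lemma \ref{lem25} asserts that each multiplication under the hypothesis $\bar{\ell}_{0}(\mathrm{G}_{n(-)})=1$ gives $\mathcal{N}(hx_i)=\mathcal{N}(h)+(n-1)$ \emph{and} reproduces the hypothesis, i.e.\ the new target tree again has $\bar{\ell}_{0}=1$. Iterating therefore yields exactly $\mathcal{N}(gx_i^{m})=\mathcal{N}(g)+m(n-1)$ with $\bar{\ell}_{0}=1$ at every stage. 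This is incompatible with both conclusions of the corollary: the upper bound $\mathcal{N}(g)+m(n-1)-1$ is exceeded by $1$, and the leaf $0^{m}$ is not even a node of the target tree if its leftmost branch has length $1$, so $\bar{\ell}_{0^{m}}=m+1$ cannot hold. For $n=2$, $m=1$ the corollary's two inequalities even read $\mathcal{N}(g)<\mathcal{N}(gx_i)\leq\mathcal{N}(g)$. Your proposed fallback---comparing with $\mathcal{N}(g)+\mathcal{N}(x_i^{m})$ by counting leaves identified in the common refinement---does not recover the constant either: when $\bar{\ell}_{0}(\mathrm{G}_{n(-)})=1$, the source tree of $x_i^{m}$ forces exactly $m$ new carets into the refinement, again giving $+m(n-1)$ in the absence of reductions, and Lemma \ref{lem27} is not applicable because $x_i^{m}$ is rooted at the root rather than attached below a branch of $g$. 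So before the induction can be completed, either the ``$-1$'' and the branch-length assertion in the corollary, or the conclusion $\bar{\ell}_{0}(GX_i^{+1})=1$ in Lemma \ref{lem25}(1), has to be corrected; as matters stand you should state explicitly that the single-step lemma yields $\mathcal{N}(g)+m(n-1)$ and that the stated bound does not follow from it.
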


\begin{proof}
This is a direct consequence of the preceeding lemma.
\end{proof}

\begin{lemma}
Let $g \in \mathcal{G}$ be an element with the tree pair representation $(\mathrm{G}_{n(+)},\sigma, \mathrm{G}_{n(-)}).$ 
Let $\ell_{u}(\mathrm{G}_{n(+)})\mapsto \ell_v(\mathrm{G}_{n(-)})$ 
be a branch of $g$ and let $h$ be an element of $F_n$. 
Let $h' = (h)_{[v]}. $ 
Then $$\mathcal{N}(gh') = \mathcal{N}(g) + \mathcal{N}(h) - 1 .$$
\label{lem27}
%Moreover
\end{lemma}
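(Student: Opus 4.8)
The plan is to understand the tree-pair multiplication $g \cdot h'$ by localising the effect of $h'$ to the single branch $\ell_u(\mathrm{G}_{n(+)}) \mapsto \ell_v(\mathrm{G}_{n(-)})$ of $g$. First I would recall the construction of $h' = (h)_{[v]}$: if $h$ is represented by the reduced tree pair $(\mathrm{H}_{+}, \mathrm{id}, \mathrm{H}_{-})$ (it is trivial as a permutation since $h \in F_n$), then $h'$ is the element of $F_n$ represented by the tree pair obtained from $h$ by attaching, above every leaf, a copy of the path $\ell_v$, so that $h'$ acts as $h$ on the subinterval coded by $v$ and as the identity elsewhere. In particular $h'$ contains the branch $\ell_v(\cdot) \mapsto \ell_v(\cdot)$ fixing the complement of that subinterval. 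The key observation is then that to form the product $g \cdot h'$ one only needs to find a common refinement of the target tree of $g$ and the source tree of $h'$; outside the subtree hanging below $v$ these two trees already agree (both equal $\mathrm{G}_{n(-)}$, resp.\ the attachment tree, on that part), so no new carets are introduced there.

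Next I would make the caret count precise. Write $i = \mathcal{N}(g)$ for the number of carets in each tree of the reduced representative of $g$, and $j = \mathcal{N}(h)$ for $h$. Since $h \in F_n$, both of its trees have $j$ carets and $j(n-1)+1$ leaves. Forming $(h)_{[v]}$ and pairing it against $g$: we take the reduced tree pair $(\mathrm{G}_{n(+)}, \sigma, \mathrm{G}_{n(-)})$ of $g$ and replace the leaf $v$ of $\mathrm{G}_{n(-)}$ by a copy of $\mathrm{H}_{-}$ (which has $j$ carets hanging from that leaf), and correspondingly replace the leaf $u$ of $\mathrm{G}_{n(+)}$ by a copy of $\mathrm{H}_{+}$. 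Because the branch $\ell_v(\mathrm{G}_{n(-)}) \mapsto \ell_v(\mathrm{G}_{n(-)})$ of $h'$ already matches the branch $\ell_u(\mathrm{G}_{n(+)}) \mapsto \ell_v(\mathrm{G}_{n(-)})$ of $g$ after this attachment, the two middle trees coincide and multiplication is just concatenation along that branch with no further subdivision. Hence the source tree of the product is $\mathrm{G}_{n(+)}$ with $\mathrm{H}_{+}$ grafted at $u$, and the target tree is $\mathrm{G}_{n(-)}$ with $\mathrm{H}_{-}$ grafted at $v$; each of these has $i + j$ carets.

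Finally I would account for the $-1$: grafting the reduced tree $\mathrm{H}_{+}$ (which itself has a root caret) onto the leaf $u$ of $\mathrm{G}_{n(+)}$ produces a tree with $i + j$ carets, but this tree need not be reduced. The possible cancellation occurs exactly at the interface: the last caret used to create leaf $v$ in $\mathrm{G}_{n(-)}$ together with the root caret of $\mathrm{H}_{-}$ — when all $n$ children of $v$ get the grafted structure that matches the all-descended structure in the source, one reducing caret pair can be removed, and the bookkeeping in the $\mathbf{pcq}$-type normal form shows this drops the count by precisely one, giving $\mathcal{N}(gh') = i + j - 1$. I would verify this by comparing with the base case already recorded in the text, namely that $\mathcal{N}(gx_i^{+1}) = \mathcal{N}(g) + (n-1)$ when $\bar{\ell}_0(\mathrm{G}_{n(-)}) = 1$: there $h = x_i$ has $\mathcal{N}(x_i) = 2(n-1)+1 = 2n-1$ carets as an element of $F_n$\,? — rather, one checks consistency with Lemma \ref{lem25} and Corollary \ref{cor37} on the appropriate branches, which pins down the constant. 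The main obstacle is precisely this last point: showing that exactly one caret cancels (never zero, never two) when the grafted tree meets the branch of $g$, which requires checking that the branch $\ell_v$ appears in $\mathrm{G}_{n(-)}$ in reduced form and that $\mathrm{H}_{-}$ is reduced, so the only place a reduction can happen is the single caret immediately above the grafting leaf.
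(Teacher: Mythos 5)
Your structural picture of the multiplication agrees with the paper's: since $h'=(h)_{[v]}$ acts only below the leaf $v$ and $g$ carries the branch $\ell_u(\mathrm{G}_{n(+)})\mapsto\ell_v(\mathrm{G}_{n(-)})$, the product $gh'$ is represented by grafting $\mathrm{H}_{+}$ onto the leaf $u$ of $\mathrm{G}_{n(+)}$ and $\mathrm{H}_{-}$ onto the leaf $v$ of $\mathrm{G}_{n(-)}$, with no further common refinement needed. The paper's proof says exactly this in the language of branches: the branches $\ell_{v\omega_s}\mapsto\ell_{v\omega_t}$ of $h'$ become branches $\ell_{u\omega_s}\mapsto\ell_{v\omega_t}$ of $gh'$.

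However, your accounting of the $-1$ is wrong, and the ``main obstacle'' you identify is a phantom. In this paper $\mathcal{N}(\cdot)$ denotes the number of \emph{leaves} of each tree in the reduced pair, not the number of carets (see the definition preceding Theorem \ref{thm1}, and the consistency check $\mathcal{N}(x_i)=2(n-1)+1$ for a two-caret tree in the proof of Lemma \ref{lem25}). With that reading the formula is immediate from your own grafting description: replacing the single leaf $v$ (resp.\ $u$) by a tree with $\mathcal{N}(h)$ leaves turns $\mathcal{N}(g)$ leaves into $\mathcal{N}(g)-1+\mathcal{N}(h)$ leaves. No caret cancels at the interface, and indeed none can: a caret of the grafted $\mathrm{H}_{+}$ reducing against one of $\mathrm{H}_{-}$ would contradict the reducedness of $h$'s pair, and a caret of $\mathrm{G}_{n(\pm)}$ cannot newly reduce because its leaf $u$ (resp.\ $v$) has become an internal vertex. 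If $\mathcal{N}$ really did count carets, the correct statement would be $\mathcal{N}(gh')=\mathcal{N}(g)+\mathcal{N}(h)$ with no correction term at all, so your attempt to show that ``exactly one caret cancels (never zero, never two)'' cannot be completed --- there is nothing to cancel. Once you read $\mathcal{N}$ as the leaf count, your second paragraph already contains a complete proof and the third paragraph should be deleted.
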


\begin{proof}
The argument follows from the ones in the interpretation of the branches. 
According to Subsection \ref{branch} 
the unique tree pair representation of 
$h' = (h)_{[v]}$ has branches 
$\ell_{v\omega_s}{(\mathrm{T}_{n(+)}(h'))} \mapsto \ell_{v\omega_t}{(\mathrm{T}_{n(-)}(h'))},$
where $\omega_s$ and $\omega_t$ are the words 
corresponding to the paths in the source tree 
and the target tree of the tree pair representing $h$ respectively.
The branch $\ell_{u}{(g)} \mapsto \ell_{v}{(g)}$ in the product $gh'$ 
takes the source tree attached to vertex labelled $v$ in the tree pair representation of $h'$ back to the vertex labelled $u$ in the source tree pf the tree pair represents $g.$
Then we have branches $\ell_{u\omega_s}{(\mathrm{T}_{n(+)}(gh'))} \mapsto \ell_{v\omega_t}{(\mathrm{T}_{n(-)}(gh'))},$ in the tree pair representation of $gh'.$ 
We here give a more geometric interpretation of this process 
in Figure \ref{fig:fig3}.
%%%graph
\begin{figure}[htp]
\centerline{\includegraphics[width=5in]{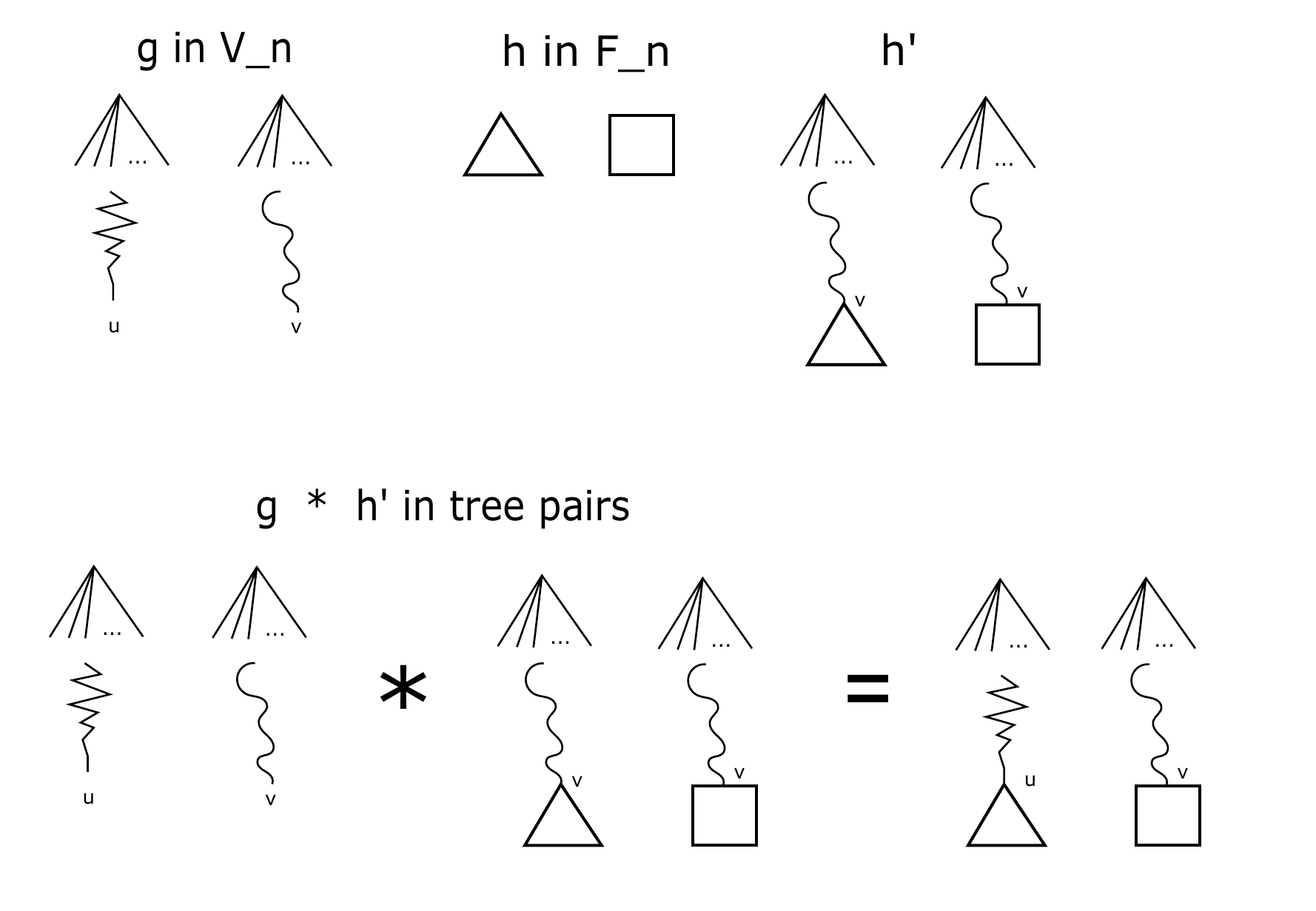}}
%\vspace*{8pt}
 \caption{$h' = (h)_{[v]}$ in tree pair representation \label{fig:fig3}}
\end{figure}
\end{proof}

\begin{proposition}[] %% finding path 
There exist constants $\delta, D > 0$ and a positive integer $Q$ 
such that the following holds. 
Let $g \in \mathcal{G}$ be an element with $\mathcal{N}(g) \geq 2(n-1)+n = 3n-2 $. 
Then there is a path represented by $\omega$ 
of length at most $ D | g | $ in the Cayley graph $\Gamma = Cay(\mathcal{G}, X)$ 
which avoids a $\delta |g|$-neighbourhood of the identity 
and which has initial vertex $g$ and terminal vertex $g\omega$.
\label{prop3.9}
\end{proposition}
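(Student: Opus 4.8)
The plan is to start from $g$ and push it far away from the identity by multiplying on the right by a long positive (or negative) power of a fixed generator, say $x_0$, which will only grow the number of leaves, and then to ``come back'' to an element of the same distance from $g$ without ever re-approaching the identity. Concretely, using Corollary \ref{cor37}, I would first choose $m$ of order $|g|$ and form $g_1 = g x_0^{m}$ (adjusting by an initial bounded word if $\bar{\ell}_0(\mathrm{G}_{n(-)}) \neq 1$, so that the relevant branch has length $1$); by Corollary \ref{cor37} this satisfies $\mathcal{N}(g_1) > \mathcal{N}(g)$ and $\mathcal{N}(g_1) \le \mathcal{N}(g) + m(n-1)$, and more importantly the target tree of $g_1$ contains a long branch, of length $m+1$, along the all-$0$ path. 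The point of creating this long branch is that it is a place where we can graft a new element of $F_n$ deep inside the tree, far from the root.

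Next I would use Lemma \ref{lem27} to realise the actual ``detour''. Pick a branch $\ell_u(\mathrm{G}_{n(+)}) \mapsto \ell_v(\mathrm{G}_{n(-)})$ of $g_1$ living along that long $0^m$ path (so $v$ has length roughly $m$), take an arbitrary nontrivial $h \in F_n$ with $\mathcal{N}(h)$ bounded, set $h' = (h)_{[v]}$, and form $g_2 = g_1 h'$. Lemma \ref{lem27} gives $\mathcal{N}(g_2) = \mathcal{N}(g_1) + \mathcal{N}(h) - 1$, so $g_2$ is still large, and since $h'$ is obtained by conjugating a bounded element of $F_n$ into a deep node, the support of $h'$ is a tiny $n$-adic subinterval near the point coded by $v$; multiplying by it therefore changes $g_1$ only ``locally and deep'', so every element on the straight-line segment from $g_1$ to $g_2$ (using a bounded-length word $w_h$ for $h$, pushed down to depth $m$) still has $\Omega(|g|)$ leaves and hence, by Proposition \ref{prop1}, word length $\Omega(|g|)$. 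The full path is then: go from $g$ to $g_1$ along $x_0^{m}$ (length $O(|g|)$), make the bounded detour producing $g_2$, and return from $g_2$ to $g_1'$ along the mirror-image generator word — but the key realisation is that we want a nontrivial terminal vertex $g\omega$, so in fact $\omega = x_0^{m}\, w_h$ (with $w_h$ the grafted word for $h'$) and we simply take the path $g \to g x_0^m \to g x_0^m h' = g\omega$, whose total length is $O(|g|)$.

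The remaining and main point is the \emph{lower bound}: every vertex visited by this path must lie outside a ball of radius $\delta|g|$ about the identity. For the initial segment $g x_0^{t}$, $0 \le t \le m$, Corollary \ref{cor37} forces $\mathcal{N}(g x_0^t) > \mathcal{N}(g) \ge 3n-2$ once $t \ge 1$, and combining the caret-count estimates of Lemma \ref{lem25} with the fact that $\mathcal{N}(g)$ itself is at least a fixed fraction of $|g|$ (Proposition \ref{prop1}) one checks $\mathcal{N}(g x_0^t)$ never drops far below $\mathcal{N}(g)$; hence $|g x_0^t| \ge c\,\mathcal{N}(g x_0^t) \ge c'\,|g|$. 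For the final bounded detour segment, the intermediate words differ from $g x_0^m$ by a word of bounded length $L = |w_h|$, so their word lengths are within $L$ of $|g x_0^m| \ge c'|g|$; taking $\delta$ smaller than $c'$ minus this bounded slack (which is negligible once $|g|$ is large, and the finitely many small $g$ are handled by enlarging constants) guarantees the whole path avoids the $\delta|g|$-neighbourhood of $1$. The hard part, and the place where the argument has to be done carefully rather than quoted, is precisely this: showing that \emph{no} vertex on the $x_0^m$-segment sneaks back toward the identity — i.e. that the caret count is essentially monotone along that segment — which is exactly what Corollary \ref{cor37} is engineered to give, so the real work is assembling the constants $c, c', L$ into a single valid $\delta$ and $D$, and choosing $Q$ as the bound on $|w_h| = O(\mathcal{N}(h))$.
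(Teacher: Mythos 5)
Your construction has the right general flavour (push $g$ out along powers of $x_0$, control the caret count via Corollary \ref{cor37} and Lemma \ref{lem27}, convert caret counts into word lengths via Proposition \ref{prop1}), but it misses the essential content of the proposition. Read literally, ``a path from $g$ to some $g\omega$ avoiding the $\delta|g|$-ball'' is satisfied by the empty word as soon as $\delta<1$, so the statement only has force because of how it is consumed in Theorem \ref{thm1}: the terminal vertex must be a \emph{canonical} element depending on $g$ only through $|g|$. In the paper's proof that endpoint is $\omega_4=x_0^{Q|g|}x_{n-1}^{-1}x_0^{-Q|g|+1}$, and the ``positive integer $Q$'' in the statement is precisely the exponent of this element --- not, as you propose, a bound on the length of the grafted word $w_h$. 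Your terminal vertex $g\,x_0^{m}h'$ still carries all of the complexity of $g$, so for two elements $g_1,g_2$ the endpoints $g_1x_0^{m_1}h_1'$ and $g_2x_0^{m_2}h_2'$ are no easier to join by a short ball-avoiding path than $g_1$ and $g_2$ were; the reduction used to prove Theorem \ref{thm1} collapses.

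The missing idea is the conjugation trick. The paper first appends $\omega_1\omega_2$ and a torsion word $\omega_3$ (in $T_n$ or $V_n$, cancelling the permutation and tree-pair asymmetry of $h$) so that $h_3=h\omega_1\omega_2\omega_3$ is essentially an element of $F_n$ supported on an initial interval $[0,a)$; it then takes $\omega_4=x_0^{Q|h|}x_{n-1}^{-1}x_0^{-Q|h|+1}$, which is supported off $[0,a)$ and hence commutes with $h_3$, and finally $\omega_5=h_3^{-1}$, so that the endpoint is $h_3\omega_4h_3^{-1}=\omega_4$, a point on the common ray of $x_0$-powers shared by all elements. The caret-count lemmas are then spent on showing every prefix of $\omega_1\omega_2\omega_3\omega_4\omega_5$ keeps $\mathcal{N}(\cdot)$ bounded below by a fixed fraction of $\mathcal{N}(h)$ --- including the return leg $\omega_5$, which your path avoids only by not returning at all. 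Your lower-bound analysis of the $x_0^{m}$-segment is fine as far as it goes, but without landing on a canonical element the proposition you have proved is the vacuous one.
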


Similar to the strategy in the proof of \cite{MR3978542}, 
we want to find a path connecting two points on two geodesics staring from the same point, 
for instance, the identity $id,$
with some bounded distance and avoid a neighbouhood of $id.$
The construction works for $F_n$, $T_n$ and $V_n$. 

In the general case, 
we associate an element $g_k$ to some $k \in \mathbb{N}$ 
which is roughly the length of some group element $h$ in $F_n$,
and then find a path from $h$ to $g_k$ of which 
the length is linear to the word length of the element $h$. 
Finally, take two such elements so that the path between them 
which is linear to some of the positive numbers associate them 
as well as avoiding a ball at center $id.$ 

\begin{proof}[Proof of Proposition \ref{prop3.9}] % proof of the proposition
Let $C$ and $c$ be the constants from Proposition \ref{prop1}. 
Pick an element $h$ randomly,
and associate to the word length of $h$ a group element $g_{| h | }$ 
so that the path $\omega$ between these two group elements in the Cayley graph is linear 
to the word length $| h|$ 
with respect to the standard finite generating set.

For some group element $h$ whose number of carets is at least three 
and hence the number of leaves is at least $2(n-1)+n,$
denote by $(\mathrm{H}_{n(+)}, \sigma_h, \mathrm{H}_{n(-)})$ the tree pair representation of $h.$ 
We want the element to be supported on some interval $ [0, a_{|h|}).$ %%%
Hence we concatenate the subpath 
$\omega_1 = x_{0}^{2}x_{n-1}^{-1}x_0^{-1}$ to obtain $h_1 = h\omega_1$ if $l_{0}(h) =1$, 
otherwise we take $\omega_1 $ to be an empty word.

% rotation process
%graphic interpretation.
\begin{figure}[h]
\centerline{\includegraphics[width=5in]{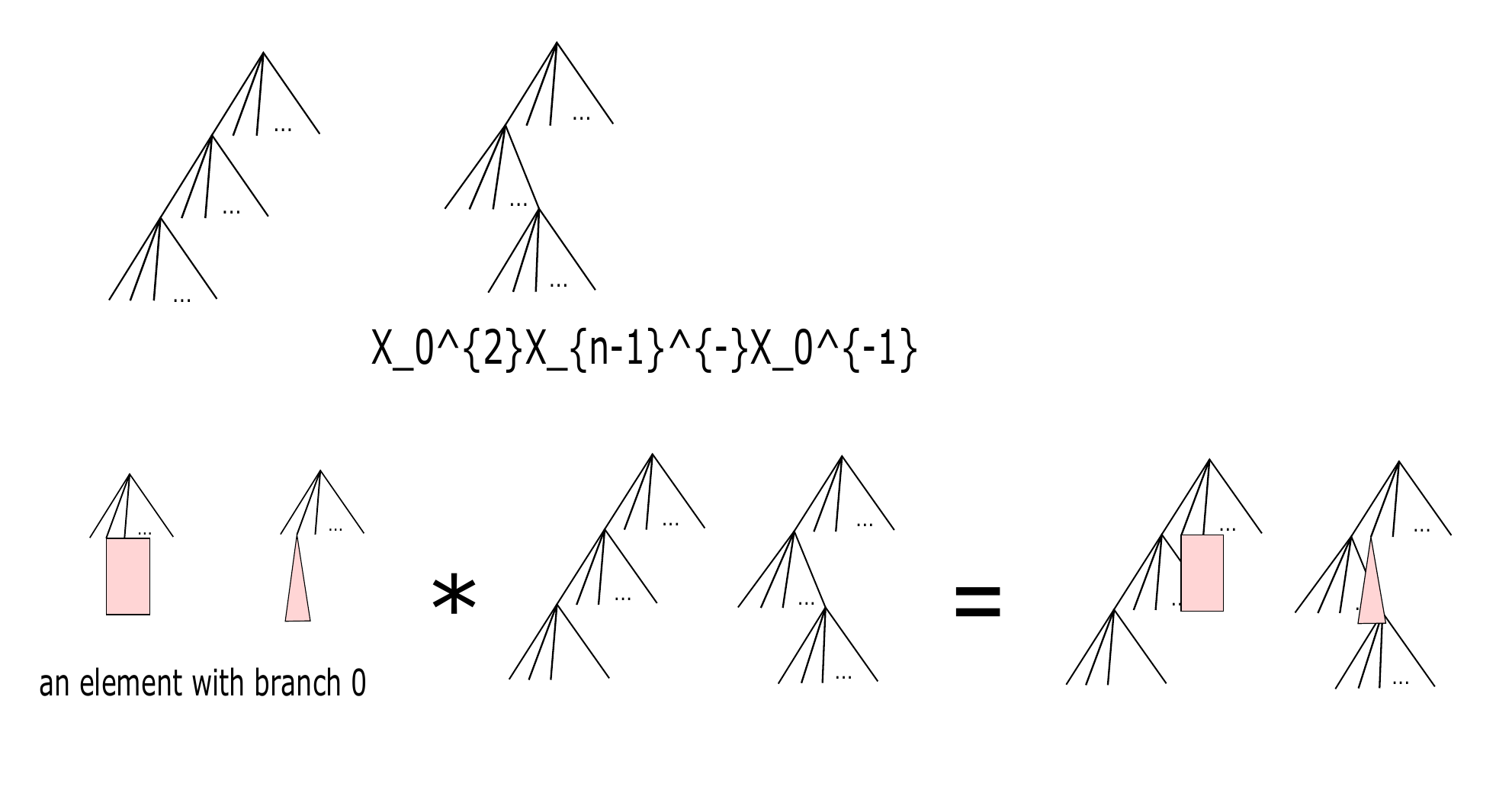}}
%\vspace*{8pt}
\caption{Adding the first subpath \label{fig4}}
\end{figure}

%subpath 1
Next, we want to add on two subpaths to ensure that the newly created word represent an element in $F_n.$
We concatenate the subpath $\omega_2$ and $\omega_3$ to $h_1$ 
and the construction of the two subpaths are as follows:
The subpath $\omega_2 = x_0^{-M\mathcal{N}(h_1)} x_{n-1}x_0^{M\mathcal{N}(h_1)}$ 
which is an element in $F_n$ 
and we take $M \geq 10\frac{C}{c(n-1)}.$  

The subpath $\omega_3$ is a word representing 
an cyclic element $h_{\omega_3}$ in $T_n < V_n$ 
such that the tree pair representation of the element $h_{\omega_3}$ 
has the tree pair 
with the source tree being the target tree of the tree pair of $h$ 
and the target tree being the source tree of the tree pair of $h,$
i.e. $(\mathrm{H}_{n(-)},\sigma_{\omega_3},\mathrm{H}_{n(+)})$
$\sigma_{\omega_3}$ 
permutes the leaves from $\mathrm{H}_{n(-)}$ to $\mathrm{H}_{n(+)}$ in the a cyclic order 
and with the permutation $\sigma_{\omega_3}(\sigma_(h_1))(\ell_0(h_{\omega_3})) =\ell_0(h_{\omega_3}),$ 
in other words, 
$h_{\omega_3}$ has the branch $\ell_{(\sigma_{\omega_3}(0))} \mapsto \ell_{0}.$ 
Since $h_{\omega_3} \in T_n,$ its word length is bounded linearly by its number of carets and hence the number of leaves,
i.e. $| h_{\omega_3} | \leq C \mathcal{N}(h_{\omega_3}).$  

By concatenating the paths, 
we have $h_{3} = h\omega_1\omega_2\omega_3.$ %subpath 2&3
From the construction, 
we have that 
$\mathcal{N}(h_1) \leq \mathcal{N}(h_1) +(m-1)(n-1) -1 \leq \mathcal{N}(h_2) \leq \mathcal{N}(h_3) \leq 2\mathcal{N}(h_1) +(m-1)(n-1) -2.$
First two in equalities are from Lemma \ref{lem27}, 
the last ones is from considering the branches with prefix having $0$s and Corollary \ref{cor37}.

The subpath $\omega_4$ represents an element in $F_n$ 
with the construction $x_0^{Q| h |} x_{n-1}^{-1} x_0^{- Q| h | +1}$
where we take $Q \geq 10\frac{M}{c^2(n-1)}.$
This subpath ensures 
$\mathcal{N}(h_3) \leq \mathcal{N}(h_3\omega_4)$ 
and commutes with $ h_3.$
This is because $h_3$ is supported on $[0, \ell_{0}(h_3) )$ 
while $\omega_4$ from the definition fixes the same interval. %subpath 4
We take the last subpath $\omega_5$ to be the word representing $h_{3}^{-1}$, 
where $h_{3} = h\omega_1\omega_2\omega_3.$ %subpath 5

Now, we can compute the upper bound of the length of the path $\omega$ as follows:
\begin{align*}
& \lVert \omega \rVert  \leq  \lVert h_3\rVert + \lVert \omega_4\rVert + \lVert \omega_5\rVert   
= 2\lVert h_3\rVert + \lVert \omega_4\rVert   \\
 &\leq 2(2M\mathcal{N}(h_1)  +2\mathcal{N}(h_1)) + (2Q\mathcal{N}(h_1) +2) \\
&\leq 4M\mathcal{N}(h_1) +(2Q+2)\mathcal{N}(h_1) + 2  \\
&\leq (4M+2Q+2) \mathcal{N}(h_1)  +2\\
 & = \Big(\frac{4M+2Q+2}{c} \Big) | h | +2
\end{align*}
%% analogue of lemma 2.12

%
We could take $\delta =\frac{c}{8M}$ since $\lVert \omega \rVert  \leq 4M\mathcal{N}(h) $ 
and $D = \Big(\frac{4M+2Q+4}{c} \Big) | h |,$ 
so that $\delta | g_{| h|}| \leq\lVert g_{| h|}\omega' \rVert \leq D| g_{| h|}|$ 
which proves the linearity of the length of the path from $h$ to $ g_{| h |}.$
\end{proof}

\begin{lemma}
For a prefix $\omega'$ of $\omega$, $| h \omega' | \geq \delta | h |.$ 
\end{lemma}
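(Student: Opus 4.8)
The goal is to show that along the path $\omega = \omega_1\omega_2\omega_3\omega_4\omega_5$ constructed in Proposition~\ref{prop3.9}, every prefix $\omega'$ satisfies $|h\omega'| \geq \delta|h|$, i.e. the path stays outside the $\delta|h|$-ball around the identity. The plan is to go block by block through the concatenation and track a lower bound on $\mathcal{N}(h\omega')$ (the number of leaves of the partial product), then convert it to a word-length bound via Proposition~\ref{prop1}. Since $\delta$ was chosen as $\tfrac{c}{8M}$, it suffices to show $\mathcal{N}(h\omega') \geq \tfrac{1}{8M}\mathcal{N}(h_1)$ (up to the harmless distinction between $\mathcal{N}(h)$ and $\mathcal{N}(h_1)$, which differ by at most $n-1$), because then $|h\omega'| \geq c\,\mathcal{N}(h\omega') \geq \tfrac{c}{8M}\mathcal{N}(h_1) \geq \delta|h|$.

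First I would handle the prefixes lying inside $\omega_1\omega_2\omega_3$: here the word being built is $h_3$, and by the chain of inequalities established in the proof of Proposition~\ref{prop3.9}, namely $\mathcal{N}(h_1) \leq \mathcal{N}(h_2) \leq \mathcal{N}(h_3)$, together with the monotonicity statements of Lemma~\ref{lem25} and Corollary~\ref{cor37} (each generator changes $\mathcal{N}$ by at most $n-1$, and in the relevant "$\bar\ell_0 = 1$" regime it only increases), the partial products never drop below roughly $\mathcal{N}(h_1) - (n-1)$, which is comfortably above $\tfrac{1}{8M}\mathcal{N}(h_1)$ once $\mathcal{N}(h)\ge 3n-2$ and $M$ is large. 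For prefixes inside $\omega_4$, recall $\omega_4 = x_0^{Q|h|}x_{n-1}^{-1}x_0^{-Q|h|+1}$ commutes with $h_3$ and is supported on the complement of $[0,\bar\ell_0(h_3))$; by Corollary~\ref{cor37} the partial products $h_3 x_0^{j}$ (and the subsequent $x_{n-1}^{-1}$, $x_0^{-1}$ steps) have $\mathcal{N} \geq \mathcal{N}(h_3) \geq \mathcal{N}(h_1)$, so these prefixes are even further from the identity.

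The main obstacle is the tail $\omega_5$, the word representing $h_3^{-1}$: as we multiply through it the product travels from $h_3\omega_4$ back toward $h_3 h_3^{-1}\cdot(\text{residue of }\omega_4)$, so $\mathcal{N}$ genuinely decreases and could in principle approach $1$. The key point to exploit is that $\omega_5$ is read in the \emph{reduced} form of $h_3^{-1}$, so multiplying $h_3\omega_4$ by an initial segment of $\omega_5$ of length $k$ can cancel at most $O(k)$ carets; equivalently, $\mathcal{N}(h_3\omega_4\omega_5') \geq \mathcal{N}(h_3\omega_4) - (n-1)\,|\omega_5'|$ by iterating Lemma~\ref{lem25}. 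But $\omega_4$ has deposited a nest of $\sim Q|h| \geq \tfrac{10M}{c^2(n-1)}|h|$ carets of $x_0$'s at a branch that $h_3^{-1}$ does not touch until its own reduced word has been nearly exhausted; so while $|\omega_5'|$ ranges over its first $|h_3^{-1}| \leq D'|h|$ letters, the subtracted quantity $(n-1)|\omega_5'|$ is dominated by the $Q|h|$-sized reservoir provided $Q$ is taken large enough relative to the word length of $h_3^{-1}$. Making this bookkeeping precise — identifying which carets of $h_3\omega_4$ can be destroyed by a given prefix of the reduced $h_3^{-1}$, and checking that the surviving count stays $\geq \tfrac{1}{8M}\mathcal{N}(h_1)$ throughout — is the crux; everything else is a routine application of the two displayed estimates.
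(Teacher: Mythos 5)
Your proposal is correct and follows essentially the same route as the paper: track $\mathcal{N}(h\omega')$ subpath by subpath using Lemma \ref{lem25}, Corollary \ref{cor37} and Lemma \ref{lem27}, then convert to a word-length bound via Proposition \ref{prop1}. You actually treat the tail $\omega_5$ more carefully than the paper (which only remarks that $\omega_5$ is the inverse of $h\omega_1\omega_2\omega_3$); your point that the $Q|h|$-sized reservoir of carets deposited by $\omega_4$ dominates the at most $(n-1)|\omega_5'|$ carets a prefix of $\omega_5$ can cancel is precisely the step the paper leaves implicit.
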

\begin{proof}
By considering the changes in the number of carets, 
there is no reduction on the number of carets while adding on the first subpath, %subpath 1
since when adding on the first subpath $\omega_1,$ 
if $\bar{\ell}_0(H_{-}) =1$, $\mathcal{N}(h\omega_1) > \mathcal{N}(h) $ by Corollary \ref{cor37}, 
otherwise we add an empty word and the length is unchanged. %subpath 2
For $\omega_2,$ Lemma \ref{lem25} ensures the increase in the number of carets 
and hence the number of word length. %subpath 3
The tree pair representation of $h_{\omega_3}$ have the same number of carets as in $h_1$ in the $\textbf{pcq}$ form. 
Besides $\omega_3$ has a cyclic permutation, 
hence by \cite{Sheng:2018aa} whose word length is linear to the number of its carets 
and together with Lemma \ref{lem27}, 
we have $\mathcal{N}(h_1\omega_2) \leq \mathcal{N}(h_1\omega_2\omega_3) =\mathcal{N}(h_3).$
Since the tree pair representation of $h_{\omega_3}$ 
has the branch $\ell_{(\sigma_{\omega_3}(0))} \mapsto \ell_{0},$
$h_3$ is supported on some $ [0, a_n).$
%prefiexes_on_omega_3
The path $h_2$ after adding on $\omega_2$ ends with powers of $x_0^{-1},$ 
we choose $\omega_3$ to be the shortest word representing the element 
$h_{\omega_3}$ with the tree pair symmetric to the tree pair representing $h.$
Since no reduction occurs when concatenating powers of $x_0$s to $h,$
there are also no reductions occurring when concatenating $\omega_3$ to $x_0^{-1}$s 
which is just reflecting the picture of the operating of the product of the tree pairs.

%subpath 4 
For the subpath represented by word $\omega_4,$ 
the argument is the same as the ones for the case when adding the subpath representing $\omega_2.$ %subpath 5 
As for adding on the subpath $\omega_5,$ 
it is just the inverse of path $h\omega_1\omega_2\omega_3.$
\end{proof}

With the preceding Proposition, we could prove Theorem \ref{thm1}.
\begin{proof}[Proof of Theorem \ref{thm1}] % proof of the theorem
From Proposition \ref{prop3.9}, 
we could pick two elements $g_1$ and $g_2$ randomly,
for each $g_i$ where $i \in\{1, 2\}$,
there is a path $\omega_i$ avoiding some ball centered at identity 
with radius $D(| g_i|)$ 
from element $g_i$ to $x_0^{Q | g_i |} x_{n-1}^{-1} x_0^{- Q | g_i | +1}.$ 
Assume without loss of generality, we have $| g_1 | \leq | g_2 |$ 
and we add a path $$p = x_0^{Q |g_1 |-1} x_{n-1} x_{0}^{- Q | g_1-g_2 | +1} x_{n-1}^{-1} x_0^{- Q | g_2 | +1}.$$
Then by the previous lemma and the estimate from of the word length,
we have 
$$  \delta \min\{| g_1|, | g_2 | \}  < cQ \rVert g_1 \lVert \leq | x_0^{Q | g_1 |} x_{n-1}^{-1} x_0^{- Q | g_1\omega'' |} |.$$
%picture
\end{proof}
For some undistorted subgroups of Thompson's groups, 
the method of finding out the comparison paths for the divergence functions is similar, 
since the estimate of the word length replies only on the subgroups $F_n$. 
%
%
%
%%%%%%%%%%%%%%%%%%%%%%%%%

Alternatively, we could rely on the quasi-isometric embeddings.
Since, the Brown-Thompson groups $F_n$, $T_n$ can be embedded quasi-isometrically 
into Thompson's groups $F$, $T,$ respectively, 
we could consider the quasi-isometric embedding $\phi^{\ast}$ from $T_n$ to $T$ defined in 
\cite{Sheng:2018aa}, 
and pick the shortest path 
between the image of two points outside the ball centred at the identity 
with radius of the element with larger word length in $T,$ 
then the paths that are quasi-isometric to the preimage of this linear path \cite{Sheng:2018aa} 
taken back by $\phi^{\ast}$ may also be an estimate.

%remarks 

%%%%%%%%%%%%%%%%%%%%%%%%%%%%%%%%%%%%%%%%%%%%%%%%%%%%%%%

%%%%%%%%%%%%%%%%%%%%%%%%%%%%%%%%%%%%%%%%%%%%%%%%%%%%%%%
\section{Divergence functions for the braided Thompson's group}

In \cite{MR3978542}, Golan and Sapir posed the question: 
if the divergence property of the braided version of Thompson's groups 
can also be found using the same method. 
In this section, 
we would like to investigate this generalised version of Thompson's group, 
the braided Thompson groups.
The origin of these groups can be traced back to Brin \cite{MR2364825} 
and Dehornoy \cite{MR2105432} of which the metric properties are being investigated 
in \cite{MR2384840, MR2514382} 
where the estimate of the word length is slightly different.

\subsection{The braided Thompson groups}

The braided version of the Thompson groups can be regarded 
as some ``Artinification" construction of the original Thompson's groups.
Intuitively, 
when considering the tree pair representation of the elements of Thompson's groups, 
instead of just permuting the labelings of the leaves, 
we add ``braids" to the leaves between the pair.
One replaces the permutations by the braid relations. 
The group $BV,$ the larger group $\widehat{BV}$ and the braided analogues of $F,$ 
namely, $BF$ and $\widehat{BF}$ will be considered in the rest of this paper.

%
%We start again from the notion of dyadically divided interval as in 
%\cite{Sheng:2018aa} 
%for braided Thompson groups.
%Consider the interval $[0, 1]$ 
%with finitely many dyadic breakpoints such that the subintervals are of the form 
%$\big[ \frac{k}{2^n}, \frac{k+1}{2^n}\big]$ where $k+1\leq 2^n$ 
%and $n,k \in \mathbb{N}.$ 
%When the unit interval $[0,1]$ is divided in such a way 
%and the number of subintervals are finite, 
%then the divisions are called a dyadic subdivision 
%and the interval $[0,1]$ with the dyadic subdivisions 
%are defined as a dyadically subdivided interval.
%
We recall dyadically subdivided intervals defined in the previous section. 
For two dyadically subdivided intervals 
with the same number of subintervals $I$ and $J,$
we associate them to an element in the braid group $B_m$ 
where $m$ is the number of the subintervals. 
In order to navigate the map between subintervals in $I$ and $J,$ 
we identify each subinterval with the thickened strands or a elastic band in the braids.
\begin{figure}[h]
\centerline{\includegraphics[width=2.25in]{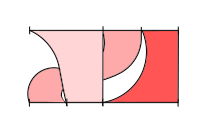}}
%\vspace*{8pt}
 \caption{ \label{fig:fig10}}
\end{figure}
Figure \ref{fig:fig10} illustrates one of the group elements 
of the braided Thompson group $BV.$ 
Adding and subtracting the dyadic breakpoints 
to the subintervals to the corresponding strands do not change the map.
When there are two such maps, 
i.e. a map from $I_1$ to $J_1$ and a map from $I_2$ to $J_2$, 
where $I_1$, $J_1$, $I_2$, $J_2$ are dyadically subdivided unit intervals, 
and $I_1$, $J_1$ are the intervals with the same number of subdivisions, 
$I_2$, $J_2$ are the intervals with the same number of subdivisions, respectively. 
We compose $J_1$ and $I_2$ as composing two piecewise-linear functions 
and then extend the composition to $I_1$ and $J_2$ as in Figure \ref{fig:fig11},
the resulted map consists again a pair of dyadically subdivided unit intervals 
associated to an element in the braid group. 
\begin{figure}[h]
\centerline{\includegraphics[width=3.75in]{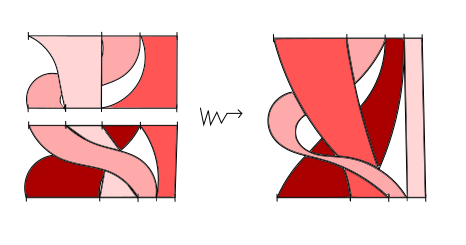}}
%\vspace*{8pt}
 \caption{ \label{fig:fig11}}
\end{figure}
\begin{definition}[$BV$] 
The braided Thompson group $BV$ is 
the group of the piecewise-linear maps 
from the set of the dyadically subdivided intervals to itself 
associating pairs of intervals with intervals having the same number of breakpoints, 
such that the subintervals in the source unit interval are associated 
to those in the target as follows,
\begin{itemize}
\item all subintervals in the source and the corresponding ones in the target 
are bijectively connected to each other 
by gluing a rectangular flat elastic band (or a ribbon) to each pair of them 
such that the top of the band is glued to the source subintervals 
and the bottom is glued to the the target subintervals;
\item the band is not allowed to flip or twist, but can be stretched;
\item the set of the bands can be identified 
with the thickened strands of a braid with $m$ strands, $m \in \mathbb{N}$ 
and can be represented by an element $b$ in the braid group $B_{m}$ 
where $m$ is the number of the subintervals in both 
the source and the target unit intervals.
\end{itemize}
The group operation is defined by:
First, compositing the target interval in the 
band-connected interval pair 
representing the former element 
and the source interval 
in the pair 
representing the latter element. 
Then by extending the composition through the bands 
back to the source interval in the pair representing the former element 
and also to the target interval in the pair of the latter one.  
\label{def41}
\end{definition}
This group can also be regarded as 
the extension of Thompson's group $V$ by the ``stable braid groups " with trivial center 
which includes all braid groups \cite{MR2952772} by $V$. %asymptotic rigid mapping class groups
Similar to the case in $F$, $T$, $V$ and the Brown-Thompson groups, 
there is a more combinatorial interpretation using the tree pair representation 
and the braid diagrams for the braided Thompson groups.

These are called tree-braid-tree diagram 
or the braided version of the $\textbf{pcq}$ factorisation 
where the middle component $\textbf{c}$ comes from 
the elements of the braid groups instead of the symmetric groups.
Since for $F$, $T$, $V$ as well as for $F_n$, $T_n$ and $V_n,$ 
we have a symbol with three tuples 
indicating the tree pair representation for a group element,
analogously, 
we have a similar symbol $(\mathrm{T}_{+} , \beta, \mathrm{T}_{-} )$ for elements in $BV$ 
where $\beta$ is a braid induced by an element in the braid group $B_{m+1}$ with $m$ strand.

% graph  tree braid tree BV % finite generating set 
\begin{figure}[h]
\centerline{\includegraphics[width=5in]{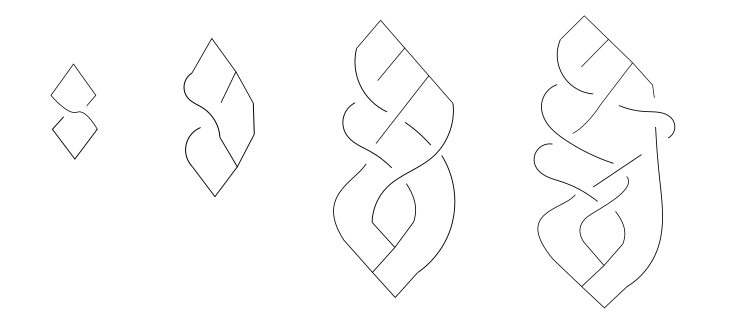}}
%\vspace*{8pt}
 \caption{ $\tau_1$, $\sigma_1$, $\phi \in BV$, $\psi \in BF.$ \label{fig:fig5}}
\end{figure}

% equivalence relation
%
Following \cite{MR3545879}, 
we allow the tree-braid-tree diagrams to change to each other 
through the following equivalent movements in Figure \ref{fig:fig13} 
which form an equivalence class 
and hence define the composition of the diagrams.
\begin{figure}[h]
\centerline{\includegraphics[width=4.5in]{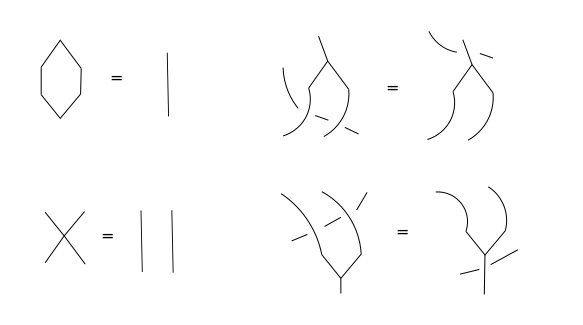}}
\vspace*{8pt}
 \caption{ \label{fig:fig13}}
\end{figure}
%the composition
When we want to compose two tree-braid-tree diagrams, 
we lay the two diagrams vertically,
stacking the first diagram pair on top of the second diagram pair
and merge the target tree of the top diagram 
and the source tree of the bottom tree 
using the relations on the left side on Figure \ref{fig:fig13}, 
then using the relations on the right side to drag the carets up or down 
to obtain a new tree-braid-tree diagram.  
This operation coincides with the multiplication of the group elements.

Similarly, we defined the braided version for the group $F$.
\begin{definition}[$BF$] 
The braided Thompson group $BF$ is 
the group of the orientation-preserving piecewise-linear isotopies 
from the set of the dyadically subdivided intervals to itself 
associating pairs of intervals with intervals having the same number of breakpoints,
such that the subintervals in the source unit interval are associated 
to the ones in the target as follows,
\begin{itemize}
\item all subintervals in the source and the corresponding ones in the target 
are connected together 
by gluing a rectangular flat elastic band to each pair of them
such that the top of the band is glued to the source subintervals 
and the bottom is glued to the corresponding target subintervals;
\item the band is not allowed to flip or twist, but can be stretched;
\item the set of the bands can be identified 
with a thickened pure braid with $m$ strands, $m \in \mathbb{N}$ 
and can be represented by an element $p$ in the pure braid group $PB_{m}.$
In addition, $m$ is the number of the subintervals in both the source and the target unit intervals.
\end{itemize}
The group operation is defined in the same manner 
as the one in the Definition \ref{def41}.
\label{def42}
\end{definition}
%graph tree-braid-tree BF

%infinite generating set 
Both $BF$ and $BV$ are finitely presented \cite{MR2384840} 
and $BV$ has a finite generating set $x_0$, $x_1$, $\sigma_1$ and $\tau_1$ 
where $x_0$ and $x_1$ come from the standard generating set of $F$ 
and the other two are induced by the braid relations 
and are illustrated on the left half in Figure \ref{fig:fig5}.
These two groups also have series of infinitely generating sets $x_i^{\pm1}$, $\sigma_i^{\pm1}$,$\tau_i^{\pm1}$ 
where $x_i^{\pm1}$ are the infinite order generators from the original Thompson's groups 
and $\sigma_i^{\pm1}$,$\tau_i^{\pm1}$ are the infinite sequence of generators induced from the braid groups 
\cite{MR2514382, MR3781416} 
which can be represented by tree-braid-tree diagrams with a pair of all right trees 
and a positive crossing at the strands on $(i-1)$th and $i$th leaves for the diagram pair representing $\tau_i^{\pm},$ 
a positive crossing at the strands on $(i-2)$th and $(i-1)$th leaves for diagram pair $\sigma_i^{\pm1}.$
%%%
%%%
%%% BT
%Although this has not been formally defined,
%there is a cyclic version of the braided Thompson's groups. %has appeared in a few papers\cite{}.
%%
%%
%%
%\begin{definition}[$BT$]
%The braided Thompson's group $BT$ is defined as 
%the orientation-preserving piecewise linear homomorphism of the unit intervals $[0,1]/\sim = S^1$ to itself, 
%where the elements maps the dyadically subdivided unit interval $I$ to a dyadically subdivided unit interval $J$ 
%with the same number of the subintervals satisfy the following:
%Let the subintervals of the two dyadically subdivided intervals  $I$, $J$ 
%be labelled as $\{i_0, i_1\cdots i_k\}$, $\{j_0, j_1\cdots j_k\}$ from left to the right, 
%respectively, where $i_s$s and $j_t$s are natural numbers from $1$ to $k$, 
%\begin{enumerate}
%\item Subintervals labeled by $\{i_0, i_1\cdots i_k\}$ are mapped to $\{j_0, j_1\cdots j_k\}$ 
%in one-to-one correspondence 
%and the order of the subindex of the subintervals are preserved.
%\item The map $\gamma$ induced by a group element of $BT$ should be 
%an element which can be represented by of the braid group with $k$ strands with cyclic permutation. 
%\end{enumerate}
%\end{definition}

%The cyclic permutation ensures that the composition of two such elements provide an element in $BT$.
%This group can be regarded as the ``leaves shifted" version of $BT$.
% Hence should be finitely generated ? using Burillo's presentation and shift the index.
% This groups contains the subgroup generated by $\langle x_0, x_1, \tau  \rangle$
% figure 
Similar as in the non-braided version of Thompson's groups, 
a set of the tree-braid-tree diagrams 
under some equivalence relation 
have a unique reduced diagram pair 
which corresponds to a standard form.
A more precise description of the equivalent relation of the diagrams 
and the minimal size representatives can be referred to \cite{MR1806724}.
Since the estimate on the word length later on will depend largely on 
this standard form, 
we give a brief introduction on this combinatorial description below.

%pcq form 
\begin{definition}[The braided version of $\textbf{pcq}$ form]
Let the reduced labelled tree pair $(\mathrm{T}_{(+)},\beta, \mathrm{T}_{(-)} )$ 
represent an element $g \in BV$ 
and $\mathrm{T}_{(+)}$ and $\mathrm{T}_{(-)}$ each have $i$ carets. 
Let $\mathrm{R}$ be the all right tree with $i$ carets. 
We write $g$ as a product $\textbf{p}\beta\textbf{q}$, where:
\begin{enumerate}
\item $\textbf{p}$,  a positive word in the infinite generating set of $F$ 
and with the form 
$\textbf{p} = x_{i_1}^{r_1}x_{i_2}^{r_2} \cdots x_{i_y}^{r_y} $ 
where $i_1 < i_2 \cdots < i_y $ 
with tree pair $(\mathrm{T}_{(+)}, \rm{id}, \mathrm{R}).$
\item $\beta$ part is induced by an element in the braided group of $m$-strands, 
which is represented by a $(\mathrm{R},\beta,\mathrm{R}),$ 
and is a word in the infinite generating sets $\{\sigma_i\}$ and $\{\tau_i\}$ 
which are induced by the elements in $B_{m}$ where 
$m = \max\{ \mathcal{N}(\mathrm{T}_{(+)}), \mathcal{N}(\mathrm{T}_{(-)}) \} .$ %\cite[]{}
\item $\textbf{q}$, a negative word, 
is the in the form $x_{j_z}^{-s_z} \cdots x_{j_2}^{-s_2}x_{j_1}^{-s_1}$ 
where $j_1< j_2 <\cdots < j_z$ for an element in $F$ 
represented by $(\mathrm{R},\rm{id}, \mathrm{T}_{(-)}).$ 
\end{enumerate} 
We call such a product $\textbf{p}\beta\textbf{q}$, a $\textbf{pcq}$ factorization for the braided Thompson groups. 
\end{definition}
Note that a word with the form 
$x_{i_1}^{r_1}x_{i_2}^{r_2} \cdots x_{i_y}^{r_y} \mathrm{id} x_{j_z}^{-s_z} \cdots x_{j_2}^{-s_2}x_{j_1}^{-s_1} $ in $BV,$
where $i_1 < i_2 \cdots < i_y  \neq j_z >\cdots > j_2 > j_1 $, $r_k$'s and $s_l$'s are all positive 
and $1 \leq j \leq i(n-1)+1$, 
is a word 
with respect to the infinite generating set of $F.$

%
%
%
%\begin{example}
%pcq forms
%\end{example}
In \cite{MR2384840}, there is a silmilar notion, \textit{block}. 
By applying reductions to the words in the $\textbf{pcq}$ form 
we may naturally obtain a block.

This is another standard form of the group elements 
when taking the word $\beta$ to be the right-greedy form \cite{KD08}.
%block
\begin{definition}[Block\cite{MR2384840}]
A word in the infinite generators $x_i^{\pm1}$, $\sigma_i^{\pm1}$, $\tau_i^{\pm1}$ is called a block,
if it is of the form $\omega_1\omega_2\omega_3^{-1}$ where 
\begin{enumerate}
\item $\omega_1$ is a positive word as the $\textbf{p}$ part in the $\textbf{pcq}$ representation.
\item Let $N = \max{\{ N(\omega_1), N(\omega_3)\}},$ 
then there exists an integer $m \geq N +1 $ such that $\omega_2$ is a word 
in the generators $\{ \sigma_1, \sigma_2, \cdots, \sigma_{m-2}, \tau_{m-1} \}.$
\item  $\omega_3^{-1}$ is a negative word as the $\textbf{q}$ part in the $\textbf{pcq}$ representation.
\end{enumerate}
\end{definition}

Notice that when the $\textbf{c}$ part of the $\textbf{pcq}$ form of some group element 
is a word in the generators induced from some braid group 
then the word can easily be transformed to a block.
\begin{lemma}
Every group element in the \textbf{pcq} form 
can be transformed to a block and transform back 
by finitely many steps of moving around letters.
\end{lemma}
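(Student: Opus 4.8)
The plan is to set up an explicit translation dictionary between the \textbf{pcq} form $\textbf{p}\beta\textbf{q}$ and the block form $\omega_1\omega_2\omega_3^{-1}$, and then verify that applying this dictionary uses only finitely many ``local'' letter moves. First I would observe that the $\textbf{p}$ and $\textbf{q}$ parts are literally the same in the two presentations — both are the positive word coming from the source tree $\mathrm{T}_{(+)}$ and the negative word coming from the target tree $\mathrm{T}_{(-)}$ — so the entire content of the lemma is the passage between $\beta$, an arbitrary word in the infinite generating set $\{\sigma_i\}\cup\{\tau_i\}$ induced from $B_m$ (with $\beta$ represented by $(\mathrm{R},\beta,\mathrm{R})$), and $\omega_2$, a word in the \emph{finite} alphabet $\{\sigma_1,\dots,\sigma_{m-2},\tau_{m-1}\}$ for a suitable $m\ge N+1$, where $N=\max\{\mathcal{N}(\mathrm{T}_{(+)}),\mathcal{N}(\mathrm{T}_{(-)})\}$.

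Next I would make precise what the ``steps of moving around letters'' are. On the tree-braid-tree side these are exactly the equivalence moves of Figure \ref{fig:fig13}: (a) the reductions/expansions that merge or split a caret across the braid, which let us pass between a generator $\sigma_i$ or $\tau_i$ acting at a pair of leaves and the ``same'' crossing after attaching or deleting a common caret at those leaves; and (b) the moves that drag carets up or down past the braiding. Both directions of the lemma then amount to the following: given $\beta$ written in the $\{\sigma_i\},\{\tau_i\}$ generators on $m$ strands, I would show that by inserting at most finitely many carets (enough to bring the braid up to $m$ strands where $m\ge N+1$) and then commuting/sliding those carets through the word, one rewrites $\beta$ letter-by-letter into a word $\omega_2$ over the restricted alphabet $\{\sigma_1,\dots,\sigma_{m-2},\tau_{m-1}\}$, and conversely; each letter of $\beta$ is replaced by a bounded-length word in the block alphabet, so the total number of elementary moves is finite. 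The key mechanism is that a generator $\tau_i$ or $\sigma_i$ with large index, once enough carets are present, becomes conjugate by a \emph{finite} product of the low-index generators (carets sliding past it) to one of the allowed generators; this is precisely the combinatorial content already used to put $\beta$ in right-greedy form \cite{KD08} and to define a block \cite{MR2384840}, together with the remark above that when the $\textbf{c}$ part is already a braid-group word the transformation to a block is routine.

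I would structure the write-up in three short steps: (1) fix the common value $m=\max\{\mathcal{N}(\mathrm{T}_{(+)}),\mathcal{N}(\mathrm{T}_{(-)})\}$ and pad both $\mathrm{R}$'s in $(\mathrm{R},\beta,\mathrm{R})$ to that many leaves using move (a) — finitely many caret insertions; (2) use move (b) to slide each newly attached caret through $\beta$ one crossing at a time, recording that each crossing it passes is rewritten into a bounded word of the block generators, so after finitely many moves $\beta$ has become a word $\omega_2$ over $\{\sigma_1,\dots,\sigma_{m-2},\tau_{m-1}\}$; (3) note that $\textbf{p}$ and $\textbf{q}$ are untouched, so $\textbf{p}\beta\textbf{q}$ has become the block $\omega_1\omega_2\omega_3^{-1}$ with $\omega_1=\textbf{p}$, $\omega_3^{-1}=\textbf{q}$; the reverse transformation simply runs the same moves backwards, pulling the extra carets back out. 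The main obstacle will be step (2): making rigorous that sliding a caret past a single crossing replaces that crossing by a \emph{uniformly bounded} word in the block alphabet (so the whole process terminates), rather than just some finite-but-unbounded word — this is a bookkeeping argument on the braid relations realized by Figure \ref{fig:fig13}, and I would isolate it as the one computation worth doing carefully, citing the analogous normal-form arguments in \cite{MR2384840, KD08} for the combinatorial facts about braids on carets that it relies on.
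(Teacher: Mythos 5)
Your proposal is correct and follows essentially the same route as the paper: keep $\textbf{p}$ and $\textbf{q}$ as the outer blocks and rewrite only the middle braid word $\beta$ into the restricted alphabet $\{\sigma_1,\dots,\sigma_{m-2},\tau_{m-1}\}$, with termination coming from the finiteness of $\beta$. The paper phrases the rewriting algebraically via the relations $D_1$, $D_2$ of \cite{MR2384840} (moving the generators $x_i$ around) rather than via your diagrammatic caret-sliding, but these are the same elementary moves; note also that the \emph{uniform} bound per crossing you flag as the main obstacle is not actually needed, since each of the finitely many letters of $\beta$ is rewritten only once by some finite word.
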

\begin{proof}
The $\textbf{p}$ and the $\textbf{q}$ are elements in $F$,
they are equivalent to the first block and the third block in the block form 
and can possibly be reduced to shorter words.
The middle part of the $\textbf{pcq}$ form denoted by $\beta$, 
is a word in $\sigma_i^{\pm1}$,$\tau_i^{\pm1}$ 
induced by elements in $B_{m}$ for some positive integer $m$, 
hence, more precisely, 
a word in $\{\tau_1, \cdots, \tau_{m-1}\}$ and $\{\sigma_1, \cdots, \sigma_m\}.$
Leave the letters in $\{\sigma_1, \cdots, \sigma_m\}$ where they are 
and rewrite the letters from $\{ \tau_2, \cdots, \tau_{m-1} \}$
by reducing the subsuffix of the generators using the relation $D_2$ and $D_1$ 
from \cite{MR2384840} to move around the generators $x_{i}.$ % $(D1)$ $\tau_ix_j =x_j\tau_{i+1} $ for $\mid i-j \mid \geq 2$%  (D2) $\sigma_i\tau_{i+1}$ 
Since $\beta$ has only finite letters, 
the process takes only finitely many steps.
\end{proof}

%%% the BV with hat
In the original definition \cite{MR2364825}, 
there is a ``larger" version of the braided Thompson group 
which contains the group $BV$ 
and also sits inside $BV$ 
which has the last strand unbraided and we define this group formally below:
\begin{definition}[$\widehat{BV}$]
The braided Thompson group $\widehat{BV}$ is defined as
the group of the piecewise-linear maps 
from the set of the infinite dyadically subdivided intervals to itself 
such that the first $k$ subintervals in the source interval 
are associated to the first $k$ subintervals in the target 
as an element in $BV$ in the description of Definition \ref{def41}, 
where $k \in \mathbb{N}.$ 
The rest of the subintervals in the source and the target
are associated to each other one by one from the left in order. 
\end{definition}
%%%% the BF with hat
Similarly, we could the define the larger version of the braided Thompson groups containing $F.$
\begin{definition}[$\widehat{BF}$]
The braided Thompson group $\widehat{BF}$ is 
the group of the orientation-preserving piecewise-linear isotopies 
from the set of the infinite dyadically subdivided intervals to itself 
such that the first $k$ subintervals in the source interval 
are associated to the first $k$ subintervals in the target 
as an element in $BF$ in the description of Definition \ref{def42},
where $k \in \mathbb{N}.$  
The rest of the subintervals in the source and the target
are associated to each other one by one from the left in order.
\end{definition}

% the finite generating set in graphs !!!!!!!!!!!!
%%%%%%%%%%%%%%%%%%%%%%
% word length
Since the braided version of Thompson's groups involve both Thompson's groups 
and braid groups, 
we expect the estimate of the word length 
to depend on the estimate of the word length of both of Thompson's groups 
and braid groups. 

% the estimate depends on the 
%braided %pure braided.
\subsection{Divergence property of the braided Thompson groups $BF$, $BV$} %

By considering the group elements of the braided Thompson groups as tree-braid-tree diagrams,
we estimate the word length from a purely combinatorial aspect.

For Thompson's group $F$ and the generalisations $F_n$ 
without the torsion elements,
the word length is proportional to the number of carets in the tree pair representation, 
whereas for the braid groups, 
the word length with respect to the standard generators is 
closely related to the number of the crossings. 
Here we denote by $\mathcal{K}(\cdot)$, the number of crossing of the tree-braid-tree diagram representing some group element.
%Since the divergence functions rely on the estimate of the word length,
%they also vary due to the change in the number of carets and the number of crossings in the standard tree-braid-tree form. 
In \cite{MR2514382}, Burillo et. al. have considered the Garside elements of the braid groups 
inside the $BV$ in order to give the following estimate. 
%
%there exists a constant $C$ such that the length $| \omega | $ is at most $C(n+nk).$

\begin{lemma}[\cite{MR2514382}]
For an element $\omega$ in $BV$ 
which has a reduced tree-pair-tree diagram representative with $n$ leaves and $k$ total crossings, 
and with the maximum number of crossings of a pair of strands is $s$, 
then there exist constants $C_1$, $C_2$, $C_3$, $C_4$ such that the length $| \omega | $ satisfy the following:
$$C_1\max\{n, \sqrt[3]{k}\} \leq C_2 \max\{n, s\} \leq | \omega | \leq C_3 (n+nk) \leq C_4 (n+n^3s).$$
\label{lem2}
\end{lemma}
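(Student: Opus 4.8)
The plan is to read off both estimates from a reduced tree-braid-tree representative $(\mathrm{T}_+,\beta,\mathrm{T}_-)$ of $\omega$ --- equivalently from its braided $\mathbf{pcq}$ form, or from a block --- in which each tree has $n$ leaves and $\beta\in B_m$ with $m$ comparable to $n$ realises the $k$ crossings. The two outer inequalities are elementary bookkeeping: in any diagram each of the $\binom n2$ pairs of strands crosses at most $s$ times, so $k\le\binom n2 s\le n^2 s$; hence $\sqrt[3]{k}\le n^{2/3}s^{1/3}\le\max\{n,s\}$ and $nk\le n^3 s$, which gives $C_1\max\{n,\sqrt[3]{k}\}\le C_2\max\{n,s\}$ and $C_3(n+nk)\le C_4(n+n^3 s)$. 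So the real content is the middle chain $C_2\max\{n,s\}\le|\omega|\le C_3(n+nk)$.

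For the upper bound I would use the factorisation $\omega=\mathbf{p}\,\beta\,\mathbf{q}$. The factors $\mathbf{p}$ and $\mathbf{q}$ lie in $F$ and are represented by tree pairs with at most $n-1$ carets, so the word-length estimate for $F$ (part (1) of Proposition \ref{prop1} with $n=2$) gives a word of length $O(n)$ in $x_0,x_1$ for each, and the same word represents it inside $BV$; thus $|\mathbf{p}|_{BV},|\mathbf{q}|_{BV}=O(n)$. For $\beta$, write its $k$-crossing diagram from top to bottom as a product of $k$ band generators $\sigma_i^{\pm1},\tau_i^{\pm1}$, all with index at most $m=O(n)$; each such infinite generator is a conjugate of $\sigma_1^{\pm1}$ or $\tau_1^{\pm1}$ by a word of length $O(i)=O(n)$ in $x_0,x_1$, so $|\beta|_{BV}=O(nk)$. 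Summing the three contributions yields $|\omega|\le C_3(n+nk)$.

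For the lower bound, the inequality $|\omega|\ge c\,n$ comes from counting leaves: composing the generators of a geodesic word $\omega=s_1\cdots s_\ell$ produces a (possibly unreduced) diagram with at most $\ell+1$ leaves, since $\sigma_1,\tau_1$ leave the number of leaves fixed while $x_0^{\pm1},x_1^{\pm1}$ change it by one, and passing to the reduced diagram only deletes carets, so $n\le\ell+1=|\omega|+1$. The inequality $|\omega|\ge c\,s$ is the crux, and is where the Garside elements studied in \cite{MR2514382} enter: put $\beta$ into left-greedy (Garside) normal form so that it is a product of canonical permutation factors; since a pair of strands crosses at most once inside a permutation braid and the tree part only re-indexes and cables strands, $s$ is bounded above by the number of canonical factors of $\beta$, and one must then show conversely that this number of factors is $\le O(|\omega|)$. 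I expect this to be the main obstacle, because the reduction and caret-sliding moves of Figure \ref{fig:fig13} interact with crossing data non-monotonically --- sliding a caret through the braid can replace one crossing by two, while collapsing a parallel cable can lower the crossing count --- so one has to argue that the reduced diagram is, up to a uniform multiplicative constant, the crossing-optimal representative of $\omega$ and that this count controls the Garside length of $\beta$; the upper bound and the leaf-counting lower bound, by contrast, are routine.
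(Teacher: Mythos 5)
Your outer inequalities are fine ($k\le\binom{n}{2}s$ gives both $\sqrt[3]{k}\le\max\{n,s\}$ and $nk\le n^3s$), and so is the upper bound: factor $\omega=\mathbf{p}\beta\mathbf{q}$, bound $|\mathbf{p}|,|\mathbf{q}|$ by $O(n)$ via the $F$-metric, and write $\beta$ as a product of $k$ band generators of index $O(n)$, each of word length $O(n)$. The leaf-counting lower bound $n=O(|\omega|)$ also works, though note one slip: $\sigma_1^{\pm1}$ does \emph{not} always preserve the leaf count --- the paper's own subsequent lemma records $\mathcal{N}(g)\le\mathcal{N}(g\sigma_1^{\pm1})\le\mathcal{N}(g)+1$ --- but since every generator changes $\mathcal{N}$ by at most a bounded amount, $n\le|\omega|+O(1)$ survives. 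Be aware that the paper offers no proof to compare against: the lemma is quoted verbatim from Burillo--Cleary \cite{MR2514382}.

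The genuine gap is the bound $s\le C|\omega|$, which you yourself flag as ``the main obstacle'' and leave open, so as written the middle lower bound $C_2\max\{n,s\}\le|\omega|$ is not established. The Garside route you sketch puts the difficulty in the wrong place: bounding $s$ by the number of canonical factors of $\beta$ is the easy half, but controlling the canonical length of the braid part of the \emph{reduced} diagram under multiplication by generators is exactly as hard as the original problem, precisely because (as you observe) cabling and caret-sliding scramble the braid word. The standard argument avoids normal forms and inducts on word length, tracking $s$ directly: to form $gx^{\pm1}$ one (i) cables strands to match the trees --- parallel sub-bands of a flat, untwisted band never cross each other and each crosses every other strand exactly as often as the original band did, so cabling preserves the maximal pairwise crossing number; (ii) stacks the cabled braid of the generator, in which every pair of strands crosses at most once, so pairwise crossing numbers add and the maximum grows by at most $1$; and (iii) reduces, which only merges parallel strands and hence cannot increase the maximum. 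This yields $s(gx^{\pm1})\le s(g)+1$ and therefore $s(\omega)\le|\omega|$. Replacing your Garside paragraph with this monotonicity argument closes the gap.
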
%%%%%%% geodesics from Garside elements 
Garside elements which provide the largest number of crossings 
for the tree-braid-tree representation of the braid elements in Thompson's group are the ones 
that we want to avoid in the estimate of the word length.
%%%%%%%%%%%%%%
%%%%%%%%%%%%%% number of carets
 \begin{lemma}{}
For $g \in  BV$ being represented 
by some reduced tree pair representation $(\mathrm{G}_{(+)},\sigma_g, \mathrm{G}_{(-)})$ 
with at least three carets in the each tree, 
and $gx_0$ be represented by $(\mathrm{GX_0}_{(+)},\sigma_{gx_0}, \mathrm{GX_0}_{(-)}),$ 
let $\bar{\ell}_{o}(\cdot)$ denote length of the branch of a tree, 
then we have 
$$\mathcal{N}(g) - 1 \leq  \mathcal{N}(gx_{0}^{\pm1}) \leq \mathcal{N}(g) + 1 $$
%the infinite generators
In addition,
\begin{enumerate}
\item If $\bar{\ell}_{0}(\mathrm{G}_{(-)}) = 1,$ 
then $\mathcal{N}(gx_0) = \mathcal{N}(g) +1$ 
and $\bar{\ell}_{0}( \mathrm{GX_0}_{(-)}) = 1.$
The number of the crossing is at most twice the original number.
\item If $ \bar{\ell}_{0}(\mathrm{G}_{(-)}) \neq 1,$ 
then $\mathcal{N}(gx_0) = \mathcal{N}(g) $ 
or $\mathcal{N}(gx_0) = \mathcal{N}(g)-1.$
Moreover, 
$\mathcal{N}(gx_0) = \mathcal{N}(g)$ 
only when $1$ and $01$ are strict prefixes 
%$\ell_{01}(\mathrm{GX_0}_{+}) \mapsto \ell_{10}(\mathrm{G}_{-})$ 
%and
%$\ell_{\omega_o}(\mathrm{T}_{+}) \mapsto \ell_{\omega_{\sigma(o)}}(\mathrm{T}_{-})$ 
are prefixes of the target tree $G_{(-)}$ in the pair representing element $g.$ 
%$\bar{\ell}_{0}(gx_{1}) = l_{1}(g)  -1.$ 
No changes in the number of crossings.
\item  If $ \bar{\ell}_{1}(\mathrm{G}_{(-)}) = 1,$ 
then $\mathcal{N}(gx_0^{-1}) = \mathcal{N}(g) +1$ 
and  $\bar{\ell}_{1}( \mathrm{GX_0^{-1}}_{(-)}) = 1.$
%$\mathcal{N}(gx_i) > \mathcal{N}(g) +1$ for $i \geq 1.$
The number of the crossing is at most twice the original number.
%and either $1$ or $01$ is a strict prefix of some branch of $\mathrm{T}_{(-)}$ %%
%then $\mathcal{N}(gx_0^{\pm1}) = \mathcal{N}(g) $ 
%and $01$ and $1$ are strict prefixes of some branch of $\mathrm{T}_{(-)}(gx_0^{\pm1}).$
%no changes in the number of crossings.
%%The change in the number of the crossings 
\item  If $ \bar{\ell}_{1}(\mathrm{G}_{(-)}) \neq 1,$ 
then $\mathcal{N}(gx_0^{-1}) = \mathcal{N}(g) $ 
or $\mathcal{N}(gx_0^{-1}) = \mathcal{N}(g)-1.$
Moreover, 
$\mathcal{N}(gx_0) = \mathcal{N}(g)$ 
only when $10$ and $11$ are strict prefixes 
are prefixes of the target tree $G_{(-)}$ in the pair representing element $g.$ 
%$\bar{\ell}_{0}(gx_{1}) = l_{1}(g)  -1.$ 
No changes in the number of crossings.
\end{enumerate}
\label{lem39}
\end{lemma}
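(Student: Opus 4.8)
The plan is to mirror the proof of Lemma \ref{lem25}, treating the generator $x_0 \in BF \subset BV$ as a tree-braid-tree diagram whose braid component is trivial and whose underlying tree pair is exactly the tree pair representing the Thompson generator $x_0 \in F$ (a caret attached to the first leaf of the root caret in the source, with the all-right tree of two carets as target). Multiplying $g$ by $x_0$ on the right means stacking the target tree $\mathrm{G}_{(-)}$ on top of the source tree of $x_0$, performing the cancellations prescribed by the equivalence moves in Figure \ref{fig:fig13}, and reading off the reduced diagram of $gx_0$. Because the braid part of $x_0$ is trivial, the only effect on the braid is a relabelling/reindexing of strands caused by adding or deleting carets; I would isolate this as the ``number of crossings'' bookkeeping and handle it separately from the tree bookkeeping.

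First I would establish the two-sided bound $\mathcal{N}(g)-1 \le \mathcal{N}(gx_0^{\pm1}) \le \mathcal{N}(g)+1$. Since $x_0$ and $x_0^{-1}$ each have a tree pair differing from the all-right tree by a single caret, composing with them can add at most one caret and, after reduction, remove at most one caret; this is the standard estimate from \cite{MR1806724} applied to the tree components, the braid component being inert under this operation. Next I would do the case analysis on the shape of the target tree $\mathrm{G}_{(-)}$ near its first leaves, exactly paralleling cases (1)--(4) of Lemma \ref{lem25} but specialised to $n=2$ and $i=0$. For (1): if $\bar{\ell}_0(\mathrm{G}_{(-)})=1$, the first leaf of $\mathrm{G}_{(-)}$ is a bare leaf, so the caret of $x_0$ attaches with no cancellation, forcing $\mathcal{N}(gx_0)=\mathcal{N}(g)+1$; the new first branch of the target is again the single edge $0$ because the added caret sits below, giving $\bar{\ell}_0(\mathrm{GX_0}_{(-)})=1$. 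For (2): if $\bar{\ell}_0(\mathrm{G}_{(-)})\neq 1$, there is already structure at the first leaf; the caret of $x_0$ either cancels against an existing caret (dropping the count by one) or not, and I would identify the non-cancelling case precisely by the prefix condition that both $1$ and $01$ occur as prefixes among the leaves of $\mathrm{G}_{(-)}$. Cases (3) and (4) are the mirror images obtained by replacing $x_0$ with $x_0^{-1}$, which swaps source and target in the diagram of $x_0$ and thus shifts the relevant leaf from $0$ to $1$.

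For the crossing count I would argue as follows. Adding a caret to a leaf in the tree-braid-tree picture splits one thickened strand into two parallel strands running alongside each other; any crossing that the original strand participated in becomes (at worst) two crossings, and crossings not involving that strand are unchanged, so $\mathcal{K}(gx_0^{\pm1}) \le 2\,\mathcal{K}(g)$ in the caret-adding cases (1) and (3). In the caret-cancelling/neutral cases (2) and (4), no caret is ultimately added to the diagram after reduction, the braid word is untouched, and so $\mathcal{K}$ is unchanged; this gives the ``no changes in the number of crossings'' assertions. The statement about which case of (2) (resp. (4)) yields equality rather than a drop is purely a statement about the reduced tree pair of $g$ and follows from the reducedness of $(\mathrm{G}_{(+)},\sigma_g,\mathrm{G}_{(-)})$: a caret can only be cancelled if the matching caret is present, and the prefix condition is exactly the combinatorial translation of ``the matching caret is present.''

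The main obstacle I anticipate is making the crossing bookkeeping rigorous: when a caret is added or removed the braid $\beta$ must be rewritten in the infinite generating set $\{\sigma_i^{\pm1},\tau_i^{\pm1}\}$ over the new strand index set, and one must be careful that the reduction moves of Figure \ref{fig:fig13} (the $D_1$, $D_2$ type relations of \cite{MR2384840} used to push carets past crossings) do not secretly increase the number of crossings beyond the claimed factor of two. I would control this by tracking a single distinguished strand (the one split or merged), noting that the equivalence moves act locally on it, and invoking the bound from Lemma \ref{lem2} only at the end if a coarse estimate suffices; the delicate point is the tightness of the ``at most twice'' claim, which I would justify by observing that each crossing of the split strand contributes exactly two crossings after the split and that no new crossings between the two copies are created because the band is not allowed to twist.
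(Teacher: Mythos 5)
Your proposal is correct and follows essentially the same route as the paper: the tree bookkeeping is the $n=2$, $i=0$ specialisation of Lemma \ref{lem25}, and the crossing bound comes from the same graphical observation that adding one caret splits a single thickened strand into two, so each of its crossings contributes at most two afterwards while the neutral/cancelling cases leave the braid word untouched. The paper's own proof is considerably terser than your write-up (it does not carry out the strand-tracking or the reduction-move bookkeeping you flag as the delicate point), so your version is if anything more complete.
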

\begin{proof} The proof is a generalised argument of the proof of Lemma \ref{lem25} 
and the estimate of the number of crossing is purely by graphical computation.
Take $g$ such that $\bar{\ell}_{0}(\mathrm{G}_{(-)}) = 1,$ 
the first statement follows from the changes in the carets.
The change is the number of crossings depending changes in the strands 
under the extra carets added to the product.
Since we have one more caret in each tree in the tree-braid tree diagram of $gx_0,$
we then have at most twice the number of the original number of crossings. 
The rest three cases are by similar arguments. 
\end{proof}
We hence know that the generators in the braided Thompson groups 
that are coming from the torsion free part of Thompson's groups 
have affects mainly on the number of the carets of the tree-braid-tree diagrams.

%%%%%%%%%%%%%%%%%%%%%
\begin{lemma}
For $g \in  BV$ being represented by some reduced tree pair representation 
$(\mathrm{G}_{(+)},\sigma_g, \mathrm{G}_{(-)})$ 
with at least three carets in the each tree, 
let $\ell_{\gamma}(\cdot)$ denote the branch of an element 
such that $\gamma$ is a branch of the source tree in the reduced tree pair representing $g$ as before, 
$\bar{\ell}_{\gamma}(\cdot)$ denote the length of the branch of some tree 
and 
%$ \tilde{\ell}_{\omega}(\cdot) $ denote the number of the descendants which are leaves of the 
%under the path $\omega$ of the target tree of the group element,
let $\mathcal{K}(\cdot)$ be the number of crossings in the reduced tree pair representation 
(or tree-braid-tree diagrams)
of an group element. 
$$\mathcal{N}(g)  \leq  \mathcal{N}(g\sigma_1^{\pm1}) \leq \mathcal{N}(g) + 1 $$
$$ \mathcal{N}(g\tau_1^{\pm1}) = \mathcal{N}(g)  $$
In addition,
\begin{enumerate}
\item If $\bar{\ell}_{0}(G_{(-)}) = 1,$ 
then $\bar{\ell}_{1}(G_{(-)}) \neq 1,$
$\mathcal{N}(g\sigma_1^{\pm1}) = \mathcal{N}(g)$ 
and 
\begin{enumerate}
\item $\bar{\ell}_{0}( \mathrm{GX_0}_{(-)}) = 1 $ when $\ell_{10}( \mathrm{GX_0}_{(-)})$ is a branch, 
$\mathcal{K}(g\sigma_1^{\pm1}) \leq \mathcal{K}(g)+1;$ 
\item $\bar{\ell}_{0}( \mathrm{GX_0}_{(-)}) \neq 1 $ when $\ell_{10}( \mathrm{GX_0}_{(-)})$ is not a branch,
$\mathcal{K}(g\sigma_1^{\pm1}) \leq 2 (\mathcal{K}(g)-2);$ 
\end{enumerate}
 \item If $\bar{\ell}_{1}(G_{(-)}) = 1,$ 
then $\bar{\ell}_{0}(G_{(-)}) \neq 1,$
$\mathcal{N}(g\sigma_1^{\pm1}) = \mathcal{N}(g)+1,$ 
and $\bar{\ell}_{0}( \mathrm{GX_0}_{(-)}) \neq 1.$
Moreover, $\mathcal{K}(g\sigma_1^{\pm1}) \leq \mathcal{K}(g)+2;$ 
\item The crossing number of $g\tau_1^{\pm},$ $\mathcal{K}(g\tau_1^{\pm}) \leq \frac{\mathcal{K}(g)^2}{4}.$
%If $\bar{\ell}_{0}(G_{(+)}) = 1,$ 
%then $\mathcal{N}(g\sigma_1^{\pm1}) = \mathcal{N}(g) $ 
%and $\mathcal{K}(g\sigma_1^{\pm1}) = | \ell_{0}(g)| \times | \ell_{01}(g) | $
%where $ | \ell_{\omega}(g) |$ is the number of leaves under the prefix $\omega$
%and $l_{0}(g\sigma_{1}^{\pm1}) \neq 1.$
%
%$\mathcal{N}(g\tau_{1}^{\pm1}) = \mathcal{N}(g) $ 
%and $\mathcal{K}(g\tau_1^{\pm1}) = | \ell_{0}(g)| \times | \ell_{1}(g) | $
%and $l_{0}(g\tau_{1}^{\pm1}) = l_{0}(g) +1.$
%
%\item If $ l_{0}(g) \neq 1$, 

   %        \begin{enumerate}

      %            \item 
      %         then when $\mathcal{N}(g\sigma_{1}^{\pm1}) = \mathcal{N}(g) $ 
      %      or $\mathcal{N}(g\sigma_{1}^{\pm1}) \leq \mathcal{N}(g) $.
      %    Moreover, $l_{0}(g\sigma_{1}^{\pm1}) \leq l_{0}(g) +1$
                  
      %            $\mathcal{N}(g\tau_{1}^{\pm1}) = \mathcal{N}(g) $ and 
       %           $l_{0}(g\tau_{1}^{\pm1}) = l_{1}(g) $ and  $l_{1}(g\tau_{1}^{\pm1}) = l_{0}(g) .$      

     %             \item
        %           if $0$ or $10$ is a strict prefix of some branch of $\mathrm{T}_{(-)}$ %%
         %          then $  \mathcal{N}(g) =  \mathcal{N}(g\sigma_1^{\pm1}) $ 
         %          and $10$ and $0$ is a strict prefix of some branch of $\mathrm{T}_{(-)}(g\sigma_{1}^{\pm1})$;
                   
          %         $\mathcal{N}(g\tau_1^{\pm1}) = \mathcal{N}(g) $ 
           %        and $01$ and $0$ is a strict prefix of some branch of $\mathrm{T}_{(-)}(g\tau_{1}^{\pm1}).$                   
             %      \end{enumerate}
%%
\end{enumerate}
\end{lemma}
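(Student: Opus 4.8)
The plan is to compute the reduced tree--braid--tree diagram of $g\sigma_1^{\pm1}$ and of $g\tau_1^{\pm1}$ directly from the diagram of $g$, in the same spirit as the proofs of Lemma~\ref{lem25} and Lemma~\ref{lem39}. Recall that each of $\sigma_1^{\pm1}$ and $\tau_1^{\pm1}$ is represented by a tree--braid--tree diagram whose two trees are all--right trees with only one or two carets and whose braid part is a single crossing of the two strands sitting at the top leaves. To form the product one stacks the diagram of $g$ on top of the diagram of the generator, merges the target tree $\mathrm{G}_{(-)}$ of $g$ with the source tree of the generator by passing to their common refinement, refines the generator's braid accordingly --- the single crossing becomes a \emph{band crossing}, in which the whole bundle of strands below one leaf is dragged across the bundle below the neighbouring leaf --- and finally reduces. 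Since $g$ is reduced and $\mathrm{G}_{(+)}$, $\mathrm{G}_{(-)}$ each have at least three carets, the common refinement of $\mathrm{G}_{(-)}$ with the (at most two--caret) source tree of the generator is essentially $\mathrm{G}_{(-)}$ itself, possibly with one extra caret grafted on.

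First I would read off the change in the number of leaves. Because $\mathrm{G}_{(-)}$ has at least three carets it already refines the small source tree of $\tau_1^{\pm1}$, so no caret is added; and since $g$ is reduced, inserting the band crossing between $\mathrm{G}_{(+)}$ and $\mathrm{G}_{(-)}$ produces no new reducible caret (a band crossing only makes the braid part more entangled, never trivialises a pair of strands), whence $\mathcal{N}(g\tau_1^{\pm1}) = \mathcal{N}(g)$. For $\sigma_1^{\pm1}$ the source tree of the generator carries one more caret, so exactly one caret must be grafted on precisely when $\mathrm{G}_{(-)}$ does not already refine it; this is governed by whether $\bar{\ell}_{0}(\mathrm{G}_{(-)})$ or $\bar{\ell}_{1}(\mathrm{G}_{(-)})$ equals $1$, giving $\mathcal{N}(g\sigma_1^{\pm1}) = \mathcal{N}(g)$ in case $(1)$ and $\mathcal{N}(g\sigma_1^{\pm1}) = \mathcal{N}(g)+1$ in case $(2)$, so that $\mathcal{N}(g) \le \mathcal{N}(g\sigma_1^{\pm1}) \le \mathcal{N}(g)+1$ always. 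The statements about the branch lengths $\bar{\ell}_{0}$, $\bar{\ell}_{1}$ of the resulting target trees then follow by tracking which two top subtrees of $\mathrm{G}_{(-)}$ are interchanged by the band crossing, exactly as in Lemma~\ref{lem25}.

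Next I would count crossings. After refinement the generator contributes a band crossing between two strand--bundles, and if these have sizes $p$ and $q$ the band has exactly $pq$ crossings; the sizes $p$, $q$ are read off from the subtrees of $\mathrm{G}_{(-)}$ hanging below the two leaves involved. In case $(1)$, when $\ell_{10}(\cdot)$ is a branch one of the bundles is a single strand, the band contributes only one crossing, and $\mathcal{K}(g\sigma_1^{\pm1}) \le \mathcal{K}(g)+1$; when $\ell_{10}(\cdot)$ is not a branch the bundle below it is genuinely fat, the band must thread past the crossings already present in $\sigma_g$, and a graphical count of which crossings get duplicated and which cancel during reduction yields $\mathcal{K}(g\sigma_1^{\pm1}) \le 2(\mathcal{K}(g)-2)$; case $(2)$ is handled the same way and gives $\mathcal{K}(g\sigma_1^{\pm1}) \le \mathcal{K}(g)+2$. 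For $\tau_1^{\pm1}$ the band crossing interchanges the two top subtrees of $\mathrm{G}_{(-)}$, so the two bundles can be as large as $\lfloor \mathcal{N}(g)/2 \rfloor$ and $\lceil \mathcal{N}(g)/2 \rceil$; combining that band with $\sigma_g$ and reducing, the crossing count of the result is dominated by the product of the two bundle sizes, which is at most $\mathcal{K}(g)^2/4$, giving $\mathcal{K}(g\tau_1^{\pm1}) \le \mathcal{K}(g)^2/4$.

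The routine part is the leaf bookkeeping, which is literally the argument of Lemma~\ref{lem25} and Lemma~\ref{lem39} with a single crossing inserted into the diagram. The delicate step --- the one I expect to take the most care --- is controlling the crossing number through the reduction of the composed diagram: one has to check that no cancellation among the duplicated band crossings is missed and that the fat--bundle estimates are sharp enough to land the stated constants, in particular the quadratic bound for $\tau_1^{\pm1}$, which is the crudest of the three and is where it is cleanest to fall back on Lemma~\ref{lem2} to relate $\mathcal{K}$ to $\mathcal{N}$ and the word length. This is exactly the ``purely graphical computation'' already invoked in the proof of Lemma~\ref{lem39}, and once the cases $\sigma_1^{+1}$ and $\tau_1^{+1}$ are settled the inverses follow from the symmetry of the diagrams.
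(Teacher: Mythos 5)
Your approach---compute the composed tree--braid--tree diagram by taking the common refinement of $\mathrm{G}_{(-)}$ with the generator's source tree, turn the generator's single crossing into a band crossing, and read off carets and crossings graphically---is exactly the approach the paper takes; the published proof is in fact terser than your plan, treating only case (1)(a) explicitly (the number of crossings ``just adds by one from $\sigma_1^{\pm}$'') and dismissing cases (1)(b), (2) and (3) as similar. Your leaf bookkeeping and your crossing counts in cases (1) and (2) line up with what the paper does.

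The one place where your argument does not land the stated bound is claim (3). Your band-crossing count for $g\tau_1^{\pm1}$ gives at most $pq \le \lfloor \mathcal{N}(g)/2\rfloor \cdot \lceil \mathcal{N}(g)/2\rceil$ new crossings, i.e.\ a bound of the shape $\mathcal{K}(g) + \mathcal{N}(g)^2/4$; passing from this to $\mathcal{K}(g)^2/4$ would require $\mathcal{N}(g) \le \mathcal{K}(g)$, which is false in general (any nontrivial $g \in F \le BV$ has $\mathcal{K}(g)=0$ while $\mathcal{N}(g)\ge 3$), and Lemma~\ref{lem2} only relates both quantities to the word length, not to each other, so your proposed fallback does not close this. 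To be fair, this is not really a defect you introduced: the paper's own proof says nothing at all about case (3), and the printed inequality $\mathcal{K}(g\tau_1^{\pm1}) \le \mathcal{K}(g)^2/4$ already fails for crossing-free $g$, so the bound as stated appears to want $\mathcal{N}(g)$ rather than $\mathcal{K}(g)$ on the right-hand side. Your computation is the honest one; just be aware that it proves a (corrected) quadratic bound in $\mathcal{N}(g)$, not the literal inequality in the statement.
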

\begin{proof}
When $\bar{\ell}_0(G_{(-)}) =1,$ 
$\bar{\ell}_{1}(G_{(-)}) \neq 1$ follows from the assumption, 
$\sigma_1^{\pm1}$ takes $0$ to $10,$
takes $10$ to $0$ and $11*$ to $11*.$
Then number of carets is hence unchanged through the computation.
% number of carets
Since there is one crossing between the first and second strand in $\sigma_1^{\pm1},$ 
i.e. strand on leaves labelled $0$ and $10,$ 
for the number of crossings of $g\sigma_1^{\pm1},$ 
we only need to consider how these two strands are interacting with other strands.
For $(a),$ the product does not add extra caret 
and hence the number of crossings just adds by one from $\sigma_1^{\pm}.$
%between the strands under these between the strands $0$ and $10.$
Similar arguments work for the case $(b).$
For $(2)$ and $(3),$ we do the similar analyse.
%$g\tau_1^{\pm1},$ 
%when $| \ell_0(g) | \neq 1,$ we could think of a few cases: 
%$\bar{\ell}_0(G_{(-)}) =1$, $| \ell_0(g) | \neq 1.$
%In the case of $l_0 (g) =1$, the number of carets stays the same, 
%while $l_0(g) \neq 1$, 
%the caret on the vertex labeled $0$ will add an extra vertex on the target tree of the product $g\tau_1.$
%This process reduces the branch of the tree pair.   %(a)
%The proof of $(b)$ is just from permutation of $\sigma^{\pm}$ and $\tau^{\pm}.$
\end{proof}

%%%%%%%%%%%%%%%%%%%%%

\begin{example}
Some elementary cases of the multiplication of the generators 
induced from braid relations $\tau_1$, $\sigma_1$ 
and the Thompson group generators $x_0$ 
are illustrated in Figure \ref{fig6} and Figure \ref{fig7}.

\begin{figure}[htp]
\centering
\subfloat[data The products of $\tau_1$ and $x_0$, $x_1$ ]{
\includegraphics[width=3in]{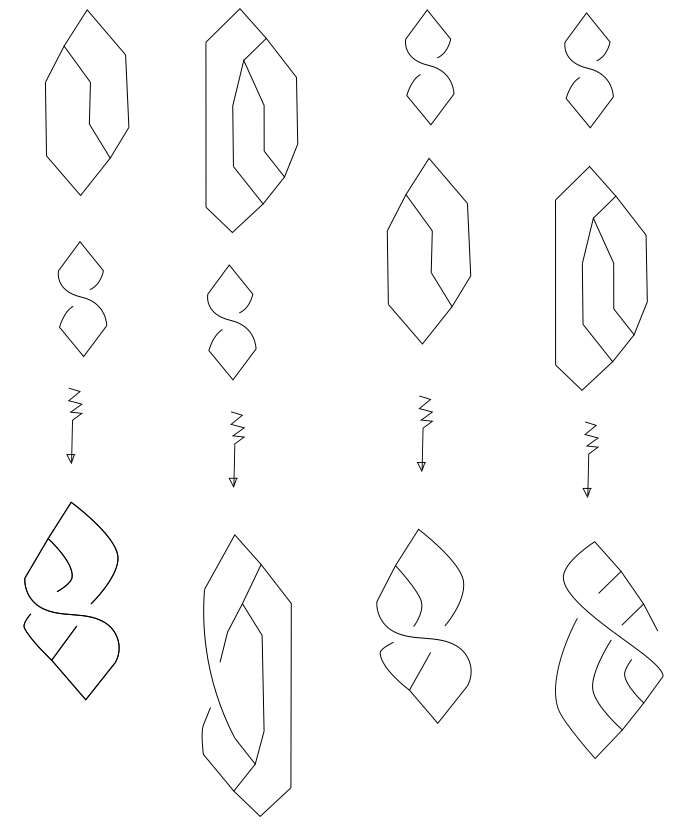}
\label{fig6}
}
\subfloat[data $\sigma_1$ and $x_0$, $x_1$]{
\includegraphics[width=2.5in]{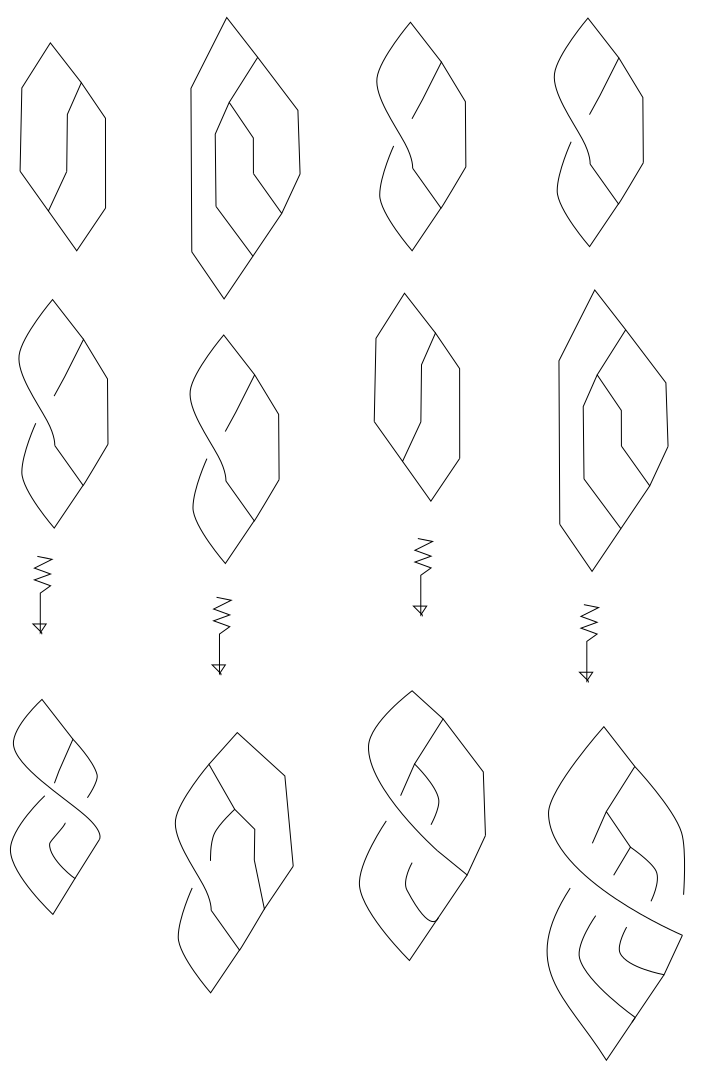}
\label{fig7}
}
\end{figure}
\end{example}

%%%%%%%%%%%%%%%%%%%%%%%%%%%%%%
%
%
% 
\begin{lemma}[]
Let $g \in BV$ be an element with the tree pair representation 
$(\mathrm{G}_{(+)},\sigma, \mathrm{G}_{(-)}).$ 
Let $\ell_{u}(\mathrm{G}_{(+)})\mapsto \ell_v(\mathrm{G}_{(-)})$ 
be a branch of $g$ and let $h$ be an element of $F$ 
and $\mathcal{K} (\cdot)$ be the number of crossings of some reduced tree pair. 
%and $ \mathcal{N}(\ell_{u}(\mathrm{G}_{(+)})\mapsto \ell_v(\mathrm{G}_{(-)}))$ 
%be the number of strands (leaves) below the branch 
%$\ell_{u}(\mathrm{G}_{(+)})\mapsto \ell_v(\mathrm{G}_{(-)})$ 
Let $h' = (h)_{[v]}.$ 
Then %$$ \max{ \{ \mathcal{N}(g),\mathcal{N}(h) \} } \leq \mathcal{N}(gh') \leq \mathcal{N}(g) +\mathcal{N}(h) -1$$
$$\mathcal{K}(g) \leq \mathcal{K}(gh')\leq \min{\{ (\mathcal{N}(g)-1)\mathcal{N}(h) +\mathcal{K}(g), \mathcal{N}(h)\mathcal{K}(g)\} }.$$ 
%and 
%$$\mathcal{K}(gh') \leq \mathcal{N}(\ell_{u}(\mathrm{G}_{(+)})\mapsto \ell_v(\mathrm{G}_{(-)})) \mathcal{N}(h) \leq \mathcal{K}(g)\mathcal{N}(h) -1 .$$
\label{lem312}
\end{lemma}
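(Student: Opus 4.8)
The plan is to track the tree-braid-tree diagram of $gh'$ explicitly and count crossings. First I would recall from Lemma \ref{lem27} that $(h)_{[v]}$ has tree pair whose source/target trees are obtained from the trivial tree by attaching the source tree $\mathrm{H}_{(+)}$ (resp. target tree $\mathrm{H}_{(-)}$) of $h$ at the leaf labelled $v$, with the braid part of $h'$ acting trivially on all strands except those lying under the leaf $v$, where it reproduces the braid of $h$. In our setting $h\in F$, so $h'$ has trivial braid part: it is a pure tree-pair expansion localized at $v$. Multiplying $g$ on the right by $h'$ therefore only refines the single strand of $g$ terminating at leaf $v$ of $\mathrm{G}_{(-)}$ into $\mathcal{N}(h)$ parallel sub-strands (one for each leaf of $\mathrm{H}_{(-)}$), and correspondingly refines the leaf $u$ of $\mathrm{G}_{(+)}$. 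I would make this precise by passing to a common subdivision as in Figure \ref{fig:fig13}: the reduced diagram of $gh'$ is obtained from that of $g$ by replacing the ribbon ending at $v$ with a ``cable'' of $\mathcal{N}(h)$ ribbons running in parallel, carrying no crossings among themselves since $h\in F$ is unbraided.

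For the lower bound $\mathcal{K}(g)\le\mathcal{K}(gh')$: every crossing in the reduced diagram of $g$ either does not involve the strand ending at $v$, in which case it persists unchanged in $gh'$, or involves that strand, in which case it becomes $\mathcal{N}(h)$ crossings (the whole cable passes over/under). Passing to the reduced representative can only remove crossings that were already removable, so no crossing of the reduced $g$ disappears; hence $\mathcal{K}(gh')\ge\mathcal{K}(g)$. For the upper bound I would argue in two ways corresponding to the two terms in the minimum. For the term $\mathcal{N}(h)\,\mathcal{K}(g)$: each of the (at most) $\mathcal{K}(g)$ crossings of $g$ is replaced by at most $\mathcal{N}(h)$ crossings after cabling one of its two strands, and no new crossings are created between distinct cables or within a cable; summing gives $\mathcal{K}(gh')\le\mathcal{N}(h)\,\mathcal{K}(g)$. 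For the term $(\mathcal{N}(g)-1)\mathcal{N}(h)+\mathcal{K}(g)$: only crossings of $g$ that actually involve the strand to $v$ get multiplied, and that strand can cross at most the other $\mathcal{N}(g)-1$ strands — and the cabled crossings contribute at most $\mathcal{N}(h)$ each — so the increase is at most $(\mathcal{N}(g)-1)(\mathcal{N}(h)-1)\le(\mathcal{N}(g)-1)\mathcal{N}(h)$ over the original $\mathcal{K}(g)$; keeping the crude bound gives $\mathcal{K}(gh')\le(\mathcal{N}(g)-1)\mathcal{N}(h)+\mathcal{K}(g)$. Taking the minimum of the two yields the stated inequality.

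The main obstacle I anticipate is the bookkeeping in the reduced-diagram step: a priori the diagram produced by the naive cabling construction need not be reduced, and reductions (cancelling a caret against an inverse caret, or a pair of opposite crossings) could interact with the cabled region in ways that change the count. I would handle this by observing that reductions only decrease $\mathcal{K}$, so they are harmless for the upper bounds, and for the lower bound I would note that the crossings of the \emph{reduced} diagram of $g$ are intrinsic (they survive into any diagram equivalent to $g$ after the leaf $v$ is refined, by the equivalence moves of Figure \ref{fig:fig13}), so they cannot be cancelled in $gh'$ either. A secondary subtlety is that ``the strand ending at $v$'' crosses at most $\mathcal{N}(g)-1$ other strands \emph{in the reduced diagram}, which is where the bound $\mathcal{N}(g)-1$ enters; this is immediate since there are $\mathcal{N}(g)$ strands total. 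Once these points are nailed down the computation is routine graphical counting as in the proof of Lemma \ref{lem39}.
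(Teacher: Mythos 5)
Your overall strategy is the same as the paper's (the paper disposes of this lemma in one sentence, ``by multiplication of tree-braid-tree diagrams''), and your cabling picture is the right way to make that precise: since $h\in F$, the diagram of $gh'$ is obtained from that of $g$ by replacing the strand ending at $v$ with a parallel cable of $\mathcal{N}(h)$ unbraided strands. Your lower bound and your derivation of the term $\mathcal{N}(h)\,\mathcal{K}(g)$ are sound: if $c_v$ denotes the number of crossings of $g$ in which the $v$-strand participates, the cabled diagram has exactly $\mathcal{K}(g)+c_v\bigl(\mathcal{N}(h)-1\bigr)$ crossings, and $c_v\le\mathcal{K}(g)$ gives $\mathcal{K}(gh')\le\mathcal{N}(h)\,\mathcal{K}(g)$.

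There is, however, a genuine gap in your justification of the other term, $(\mathcal{N}(g)-1)\mathcal{N}(h)+\mathcal{K}(g)$. You bound the multiplied crossings by arguing that the $v$-strand ``can cross at most the other $\mathcal{N}(g)-1$ strands,'' but that counts the number of \emph{distinct strands} it meets, not the number of \emph{crossings} $c_v$ it participates in. In a braid two strands may cross arbitrarily many times (e.g.\ a power of a single braid generator), so in a reduced tree-braid-tree diagram $c_v$ can greatly exceed $\mathcal{N}(g)-1$, and your claimed increase $c_v(\mathcal{N}(h)-1)\le(\mathcal{N}(g)-1)\mathcal{N}(h)$ fails in general. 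Since an upper bound by a minimum requires \emph{both} terms to be valid upper bounds, this term needs a separate argument (or the hypothesis that distinct strands of the reduced representative cross at most once, which is not available here). The paper's own one-line proof does not address this point either, so you have not missed something the author supplies; but as written your argument only establishes $\mathcal{K}(g)\le\mathcal{K}(gh')\le\mathcal{N}(h)\,\mathcal{K}(g)$.
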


\begin{proof}
The former equality can be obtained from the same argument in Lemma \ref{lem25} 
and the latter one is given by multiplication of tree-braid-tree diagrams.
\end{proof}

%%graphical interpretation
From a few of the above consequence, 
we can see that 
for elements coming from the braid groups inside $BV,$ 
they do increase the number of carets when multiplying the torsion-free elements, 
and they also increase on the number of crossings, 
hence, 
the changes in the number of crossings are in fact very hard to control, 
when concatenating elements, 
while the changes in the number of carets when adding on elements 
is partially inherited 
from those in the original Thompson's groups $F$ inside $BV.$ 
Thus, the ``counting arguments" for constructing our paths 
will still rely on the ``non-braided" part.

\subsection{Divergence property of the geodesics} %%%%%%%%%%%%%%%%%%%%%%%%%

For braided Thompson groups, 
we would like to find a similar path 
from one geodesic to another avoiding a ball centered at the identity in their Cayley graphs
with respect to the finite generating sets, 
and we first want to find an element $g_{\mid h \mid}$ 
relying on elements in $F$
which corresponds to the element $h$ in $BV$ 
such that there is a path connecting the two elements and avoids the ball 
having radius which is the same as the word length of $h.$
\begin{proposition}
There exist constants $\delta, D > 0$ and a positive integer $Q$ 
such that the following holds. Let $g \in BV$ be an element with $\mathcal{N}(g) \geq 4 $. 
Then there is a path represented by $\omega$ 
of length at most $D | g|^4$ in the Cayley graph $\Gamma = Cay(\mathcal{G}, X)$ 
which avoids a $\delta | g | $-neighbourhood of the identity 
and which has initial vertex $g$ and terminal vertex $g\omega.$
\label{prop412}
\end{proposition}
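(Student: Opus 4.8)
The plan is to adapt the construction in the proof of Proposition~\ref{prop3.9} to $BV$. In the $V_n$ case the middle subpath was a cyclic element of $T_n$ that undid the leaf permutation; here it must be a word in the infinite braid generators $\sigma_i^{\pm1},\tau_i^{\pm1}$ that undoes the braid part of the element, and the cost of writing that braid down --- governed by the upper bound in Lemma~\ref{lem2} --- is what replaces the linear bound by a quartic one.

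Fix $g\in BV$ with $\mathcal N(g)\ge 4$ and take its reduced tree-braid-tree diagram $(\mathrm G_{(+)},\beta,\mathrm G_{(-)})$. Following Proposition~\ref{prop3.9}, I would first concatenate a short word $\omega_1$ in $x_0,x_1$ (possibly empty) to arrange $\bar\ell_0(\mathrm G_{(-)})\ne 1$, then $\omega_2=x_0^{-M\mathcal N(g\omega_1)}x_1x_0^{M\mathcal N(g\omega_1)}\in F<BV$, so that $g_2:=g\omega_1\omega_2$ is supported on a short interval $[0,a_{|g|})$; by Lemma~\ref{lem39} these $F$-moves do not decrease the number of carets, and if $\omega_1$ is chosen so that the strands split by $\omega_2$ carry no crossings they also leave $\mathcal K(\cdot)$ unchanged, while (since $\omega_1,\omega_2\in F$) the braid part of $g_2$ is still a stabilization of $\beta$. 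Next comes the new subpath $\omega_3$: a word realizing the tree-braid-tree diagram $(\mathrm G_{2,(-)},\beta_{g_2}^{-1},\mathrm G_{2,(-)})$, i.e.\ an $F$-word building the target tree $\mathrm G_{2,(-)}$ of $g_2$, the braid $\beta_{g_2}^{-1}$ on all-right trees written in $\{\sigma_i^{\pm1},\tau_i^{\pm1}\}$, and the inverse $F$-word; then $g_3:=g_2\omega_3$ has trivial braid, so $g_3\in F$ and is still supported on $[0,a_{|g|})$. Finally, as in Proposition~\ref{prop3.9}, append $\omega_4=x_0^{Q|g|}x_1^{-1}x_0^{-Q|g|+1}\in F$, which commutes with $g_3$ and does not decrease the caret count, and $\omega_5=g_3^{-1}$, so that $g\omega=g_3\omega_4g_3^{-1}=\omega_4$ depends only on $|g|$.

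For the length, $\|\omega\|\le 2\|g_3\|+\|\omega_4\|$ and $\|\omega_4\|=O(Q|g|)$. By Lemma~\ref{lem2} applied to $g$ itself, $\mathcal N(g)\le|g|/C_1$ and $\mathcal K(g)\le(|g|/C_1)^3$, and since $\mathcal N(g_2)=O(M\mathcal N(g))$ while $\mathcal K(g_2)=\mathcal K(g)$, the upper bound in Lemma~\ref{lem2} applied to $\omega_3$ (a diagram with $\mathcal N(g_2)$ leaves and $\mathcal K(g_2)$ crossings) gives $\|\omega_3\|\le C_3(\mathcal N(g_2)+\mathcal N(g_2)\mathcal K(g_2))=O(|g|\cdot|g|^3)=O(|g|^4)$, while $\omega_1,\omega_2$ contribute only $O(M\mathcal N(g))=O(|g|)$. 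Hence $\|g_3\|=O(|g|^4)$ and $\|\omega\|=O(|g|^4)$, giving $D$. For the avoidance I would argue as in the prefix lemma after Proposition~\ref{prop3.9}, showing every prefix $g\omega'$ has $|g\omega'|\ge\delta|g|$: for short prefixes one uses $|g\omega'|\ge|g|-|\omega'|$, and along $\omega_2$ and $\omega_4$ the caret count only grows, so there $|g\omega'|\ge C_1\mathcal N(g\omega')\ge C_1M\mathcal N(g\omega_1)$; balancing $|g|-O(M\mathcal N(g))$ against $C_1M\mathcal N(g)$ yields a linear lower bound independent of $M$, after which one picks $\delta$ small relative to that constant (and $M,Q$ exactly as in Proposition~\ref{prop3.9}).

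The main obstacle is twofold. First, Lemma~\ref{lem39} only guarantees that a single $x_0^{\pm1}$ can double the crossing count when a length-one branch is involved, so the $M\mathcal N(g\omega_1)$-fold conjugation in $\omega_2$ could a priori blow $\mathcal K$ up exponentially and destroy the $O(|g|^4)$ estimate for $\|\omega_3\|$; the resolution is to choose $\omega_1$ (interleaving it with the first letters of $\omega_2$ if needed) so that every caret created by $\omega_2$ sits at a leaf whose strand runs monotonically and crosses nothing, so $\mathcal K$ is genuinely preserved. Second --- and this is where the braided case genuinely departs from the $V_n$ one --- trivializing the braid via $\omega_3$ tends to collapse the common part of the source and target trees, so $g_3\in F$ can have small word length; one must therefore carry out the trivialization so that a large caret count survives (or is restored \emph{before} $\mathcal K$ drops to zero, e.g.\ by performing the reduction against an all-left target tree), and check that no vertex along $\omega_3$ has word length below $\delta|g|$. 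These two points are where the fine combinatorics of tree-braid-tree diagrams, rather than the counting skeleton inherited from Proposition~\ref{prop3.9}, does the real work.
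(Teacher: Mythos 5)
Your construction follows the same five-subpath skeleton as the paper's proof ($\omega_1$ to clear the length-one branch, the conjugated $F$-words $\omega_2$ and $\omega_4$, the middle ``tree-reversing'' word $\omega_3$, and $\omega_5=g_3^{-1}$), and it extracts the $D|g|^4$ bound in exactly the same way: feed $\mathcal{K}(g)=O(|g|^3)$ from the lower bound of Lemma \ref{lem2} into the upper bound $C_3(n+nk)$ applied to $\omega_3$. The one genuine difference is the choice of $\omega_3$. You take the diagram $(\mathrm{G}_{2,(-)},\beta_{g_2}^{-1},\mathrm{G}_{2,(-)})$ and fully trivialize the braid so that $g_3\in F$; the paper instead takes $(\mathrm{T}_{(-)},b',\mathrm{T}_{(+)})$, reversing the two trees and requiring only that $b'$ return the strand over leaf $0$ to position $0$ (it explicitly notes $b'$ need not be $b^{-1}$). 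That weaker requirement already yields the branch $0^{\ell}\mapsto 0^{\ell}$ and hence the commutation with $\omega_4$, and --- because the full tree $\mathrm{T}_{(+)}$ reappears on both sides of $h_3$ --- it gives the lower bound $(M-1)\mathcal{N}(h)\leq\mathcal{N}(h_3)$ for free. This is precisely the collapse problem you identify as your second obstacle and have to patch by hand (e.g.\ reducing against an all-left tree), so the paper's partial trivialization is the cleaner route here and you may want to adopt it. Your first worry, that iterating Lemma \ref{lem39} along the $M\mathcal{N}(h_1)$-fold conjugation in $\omega_2$ could blow up $\mathcal{K}$ exponentially, is legitimate as Lemma \ref{lem39} is stated; the paper resolves it only by asserting, in property (2) of its proof, that the carets added by $\omega_2$ extend the diagram downward and to the right without creating new crossings, which is essentially the same fix you propose. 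On the avoidance estimate both arguments are comparably sketchy (the paper's stated $\delta=8M\mathcal{N}(h)+3Q$ is not even a constant), so your prefix analysis, balancing $|g|-|\omega'|$ against $C_1\mathcal{N}(g\omega')$, is a reasonable and if anything more explicit substitute.
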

\begin{proof}
%%%%%%%%%%%%%%%%%%%%%%%%%%%%%%%%%%%
%%%%%%%%%%%%%%%%%%%%%%%%%%%%%%%%%%%
Let the estimate be as in Lemma \ref{lem2}. 
%We want to pick an element $h$ randomly,
%and associate to the word length of $h$ a group element $g_{\mid h \mid }$ 
%so that the path $\omega$ between these two group elements in the Cayley graph is linear 
%to the word length $\mid h\mid$ with respect to the standard finitely generating set.
Let $h \in BV$ whose number of leaves is at least four 
and we want to first find a subpath $\omega_1\omega_2$ 
which makes $h_2 = h\omega_1\omega_2$ supported on some interval $ [0, a_n)$.
Next, we concatenate $h_2$ by another subpath $\omega_3$ to obtain $h_3 = h_2\omega_3$ 
with the condition that 
$h_3$ commutes with $\omega_4$.
This ensures $\mathcal{N}(h_3\omega_4) \geq \mathcal{N}(h_3)$.
We finally obtain $g_{| h|} = h_4 (h_3)^{-1}$ by adding another subpath $\omega_5 = h_3^{-1}$ 
which by definition can be reduced to be $\omega_4$ 
and represents an element in $F$ inside $BV$.
%graph

%The detailed construction of the path follows the ones in \cite{golan2018divergence}. 
Let $h$ be represented as a unique tree-braid-tree diagram 
with the notation $(\mathrm{T}_{(+)}, b, \mathrm{T}_{(-)})$.
By definition, 
$h$ will either have $\bar{\ell}_{(0)}(h) = 1$ or $\bar{\ell}_{(0)}(h) > 1$. 
When $\bar{\ell}_{(0)}(h) = 1$, 
we add a subpath $\omega_1 = x_0^2x_1^{-1}x_0^{-1}$ depending on the branches in $h$,
so that $l_{0}(h\omega_1) > 1,$
otherwise, we keep the original element $h.$

Then we add the second subpath $$\omega_2 = x_0^{-M\mathcal{N}(h_1)} x_{1}x_0^{M\mathcal{N}(h_1)} $$ %%%%
 to $h$ or $h\omega_1$ which is a path in $F$ linear to the length of $h$,
 where $M \geq 10\frac{C_4}{C_1}$. 

%%%%%%%%%%%%%%%% the middle subpath 
The estimate of the length of $\omega_3$ is the key part in case of the braided Thompson group.
The subpath corresponding to $\omega_3$ is taken to be the shortest word representing an element 
with the following tree pair representation $(\mathrm{T}_{(-)}, b', \mathrm{T}_{(+)})$ 
such that $b'$ is taking the strand $b(0)$ to $0,$ 
i.e. $b'(b(0)) = id,$ $b'$ is not necessarily the inverse of $b.$ 
If $h \in F$, then the braid part of $\omega_3$ just represents $id.$

The path will satisfy the following properties:

\begin{enumerate}
\item $\rVert \omega_3 \lVert \leq C_3\mathcal{N}(h_1)(1+\mathcal{K}(h_1))$ 
which follows from Lemma \ref{lem2}.
       \item  $  (M-1)\mathcal{N}(h)
       \leq (M-1) \mathcal{N}(h_1\omega_2) 
       \leq \mathcal{N}(h \omega_1\omega_2\omega_3).$ 
       Moreover, 
      $  (M-1)\mathcal{K}(h)
                 \leq (M-1) \mathcal{K}(h_1\omega_2)
               \leq  \mathcal{K}(h \omega_1\omega_2\omega_3).$
              Both sequences of inequalities can be deduced from visualising the products 
              from graphical interpretation of the group elements.  
              For the increase in the number of crossings, 
              every time we concatenate a prefix in $\omega_2$ which is a word purely from generators in $F$ 
              from the left, the resulted tree-braid-tree diagram becomes larger extending below to the right side.
              Then both the number of carets an the number of crossings increase linearly 
              which ensures that ``Garside element" 
              (elements with far larger number of crossings than the number of carets \cite{MR2514382}) will not appear.
    \item $\ell_0(h_1)$ or $0^{\ell_0(h_1)} \mapsto 0^{\ell_0(h_1)}$ is a branch of $h\omega_1\omega_2\omega_3,$ 
    which follows again from the products of tree-braid-tree diagrams.
\end{enumerate}

%
%
%
%
% quadratic
%graphic interpretation.
%adding \omega_2
For $h_1$ supported on $[0, \ell_{0}(h_1) ))$, 
we add on the fourth subpath $\omega_4 = x_0^{Q| h |} x_{1}^{-1} x_0^{- Q| h | +1}$
where $Q \geq 10\frac{M}{C_{1}^2}$ and then obtain $h_4 = h_3\omega_4$. 
The construction by the tree-braid-tree diagrams could be interpreted as follows: 
Noting that $h_3$ and $\omega_4$ commute on $[0, \ell_{0}(h_1) )$, 
since $h_3$ is supported on $[0, \ell_{0}(h_1) ))$ 
while $\omega_4$ from the definition fixes the braids in the same interval. 
The difference in $BV$ is that the number of crossings keep adding up linearly along the path.
% \mathcal(N)(h_\omega' ) \geq \mathcal{N}(h_3) where \omega' is the prefix of \omega_2,  %avoiding the ball.
%adding g_3^{-1}
Finally, we add on the last subpath $\omega_5 = h_3^{-1}$.
Since $h_4$ and $\omega_3$ commute, 
$g_{| h | } = h_4\omega_5 = h_3\omega_5h_3^{-1} = \omega_4$.

We could take $\delta = 8M\mathcal{N}(h)+3Q $ 
by purely considering the changes in the number of carets, 
so we have $ \delta | g_{| h | } | \leq | g_{| h | } \omega' |.$ %%%%%%%%%%%%%%%%%
However, 
the upper bound is given by the product of $  \mathcal{N}(g_{| h |})$ and $\mathcal{K}(g_{| h |}) $. 
Hence, the path from $h$ to $| g_{| h | } | $ has a linear lower bound 
and at most polynomial for the upper bound.
\end{proof}
\begin{lemma}
For a prefix $\omega'$ of $\omega$ representing an element in $BV,$ $|  h \omega' | \geq \delta | h |.$ 
\end{lemma}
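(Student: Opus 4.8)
The plan is to track the word length of the intermediate elements $h\omega'$ along the path $\omega = \omega_1\omega_2\omega_3\omega_4\omega_5$ constructed in the proof of Proposition \ref{prop412}, using the caret-counting estimates and showing that $\mathcal{N}(h\omega')$ never drops substantially below $\mathcal{N}(h)$, so that the lower half of Lemma \ref{lem2}, namely $| h\omega' | \geq C_2\max\{\mathcal{N}(h\omega'), s\} \geq C_1\mathcal{N}(h\omega')$, delivers the bound. First I would split a general prefix $\omega'$ into cases according to which of the five blocks it falls in, and in each case invoke the relevant monotonicity statement already proved: for the prefixes landing inside $\omega_1$ there is no reduction of carets by Corollary \ref{cor37} (or $\omega_1$ is empty); for $\omega_2$ the subpath $x_0^{-M\mathcal{N}(h_1)}x_1 x_0^{M\mathcal{N}(h_1)}$ only increases the caret count by Lemma \ref{lem39} and Corollary \ref{cor37}, and in fact $\mathcal{N}(h_1\omega_2') \geq (M-1)\mathcal{N}(h)$ once one has passed the $x_1$ letter; for $\omega_3$ the inequality $(M-1)\mathcal{N}(h) \leq \mathcal{N}(h\omega_1\omega_2\omega_3')$ is exactly property (2) listed in that proof, which holds for every prefix since $\omega_3$ is the right-greedy word representing a single symmetric diagram and concatenating it to a tail of $x_0^{-1}$'s produces no cancellation.

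The symmetric half of the path is where I would be slightly more careful. For prefixes inside $\omega_4 = x_0^{Q|h|}x_1^{-1}x_0^{-Q|h|+1}$, the element $h_3$ is supported on $[0,\ell_0(h_1))$ and $\omega_4$ fixes that interval, so $h_3\omega_4'$ has caret count at least $\mathcal{N}(h_3) \geq (M-1)\mathcal{N}(h)$ throughout, again by Lemma \ref{lem39}; for prefixes inside $\omega_5 = h_3^{-1}$ one uses that $\omega_5$ read from the left undoes $h_3 = h\omega_1\omega_2\omega_3$, so $h\omega_1\omega_2\omega_3\omega_4\omega_5' $ is (a short word away from) a prefix of $\omega_4$, whose caret count is bounded below by a constant multiple of $Q|h|$ and hence by $\mathcal{N}(h)$. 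Collecting the worst constant across the five cases gives a uniform $\delta' > 0$ with $\mathcal{N}(h\omega') \geq \delta' \mathcal{N}(h)$, and then
\[
| h\omega' | \;\geq\; C_1 \mathcal{N}(h\omega') \;\geq\; C_1\delta' \mathcal{N}(h) \;\geq\; \frac{C_1\delta'}{C_3}\,| h |,
\]
so one takes $\delta = C_1\delta'/C_3$ (consistent with the $\delta$ appearing in the proposition, up to renaming).

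The main obstacle is the $\omega_3$ block: unlike the other four subpaths, it carries the braid and one must be sure that \emph{no} reductions in the number of carets occur when it is concatenated onto the tail of $x_0^{-1}$'s produced by $\omega_2$, and that the maximal pair-crossing number $s$ of the intermediate diagram does not blow up in a way that would make Lemma \ref{lem2}'s lower bound vacuous. Here I would lean on the observation already made in the proof of Proposition \ref{prop412} that concatenating a prefix of $\omega_2$ from the left enlarges the tree-braid-tree diagram "below to the right", so that the crossings accumulate linearly rather than collapsing into a Garside-type element; combined with property (2) this keeps $\mathcal{N}$ monotone along $\omega_3$. Once that is in hand the remaining cases are routine applications of Lemma \ref{lem39}, Corollary \ref{cor37} and Lemma \ref{lem312}, exactly as in the non-braided argument.
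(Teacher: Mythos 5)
Your proof follows essentially the same route as the paper's: a case analysis over the five subpaths $\omega_1,\dots,\omega_5$, using Lemma \ref{lem39}, Corollary \ref{cor37} and Lemma \ref{lem312} to show the caret count never drops along any prefix, then converting back to word length via Lemma \ref{lem2}; you are in fact more explicit than the paper about the $\omega_4$/$\omega_5$ prefixes and about the final conversion. The one caveat is that your closing inequality $C_1\delta'\mathcal{N}(h)\geq (C_1\delta'/C_3)\,|h|$ implicitly uses $|h|\leq C_3\mathcal{N}(h)$, whereas Lemma \ref{lem2} only gives $|h|\leq C_3\,\mathcal{N}(h)(1+\mathcal{K}(h))$, so one must also track the crossing number at that step --- but the paper's own proof glosses over exactly the same point, so this is not a departure from its argument.
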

\begin{proof}
In the case of $BV$, 
we consider both the changes in the number of carets and the number of crossings. 
%There is no reduction on the number of carets while adding on the each of the subpath.
%subpath 1
When adding on the first subpath $\omega_1$, 
when $\bar{\ell}_0(h) \neq 1$, $\mathcal{N}(h) < \mathcal{N}(h\omega_1) $ 
and number of crossings $\mathcal{K}(h\omega_1) \leq \mathcal{K}(h)$
by Lemma \ref{lem39}, otherwise, we add an empty word and the length is unchanged. %subpath 2
For $\omega_2,$ Lemma \ref{lem312} ensures the increase in the number of carets 
and hence the number of word length.
%subpath 3
The word $\omega_3$ represents a subpath with the same number of carets as $h$ in the $\textbf{pcq}$ form. 
Besides we choose $\omega_3$ to be the shortest word with a non trivial braids 
with word length linear to the ones of $h.$ 
%Hence by \cite[Lemma ]{} whose word length is linear to the number of its carets 
%and together with \ref[Lemma17]{}, 
We have $ \mathcal{N}(h_1\omega_2) \leq  \mathcal{N}(h_1\omega_2\omega_3) = \mathcal{N}(h_3)  .$
Since the element represents $\omega_{3}$ having the branch $\ell_{(\sigma_{\omega_3}(0))} \mapsto \ell_{0},$
$h_3$ is supported on some $ [0, a_{|h|}).$ %subpath 4 
For the $\omega_4$, the argument is the same as the ones for the case when add $\omega_3.$ %subpath 5 
As for subpath $\omega_5$, 
it is just the inverse of path $h\omega_1\omega_2\omega_3.$
\end{proof}
\begin{theorem}
There exists constant $\delta > 0$ 
and a polynomial function $p(\cdot)$ 
such that the following holds. Let $g_1,g_2 \in \mathcal{G} $ be two elements with $\mathcal{N}(g) \geq 4.$ 
Then there is a path of length at most $p(|g_1| + |g_2|)$ in the Cayley graph $\Gamma = Cay(\mathcal{G}, X)$ 
which avoids a $\delta \min{\{| g_1| , | g_2| \} }$-neighbourhood of the identity 
and which has initial vertex $g_1$ and terminal vertex $g_2.$
\end{theorem}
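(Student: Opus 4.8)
The plan is to derive this final theorem from Proposition \ref{prop412} in essentially the same way that Theorem \ref{thm1} follows from Proposition \ref{prop3.9} in the non-braided case. Given two elements $g_1, g_2 \in \mathcal{G}$ (where $\mathcal{G}$ is $BV$, or later $BF$, $\widehat{BV}$, $\widehat{BF}$) with $\mathcal{N}(g_i) \geq 4$, I would first apply Proposition \ref{prop412} to each $g_i$ separately: there is a path $\omega_i$ of length at most $D|g_i|^4$ in the Cayley graph, avoiding a $\delta|g_i|$-neighbourhood of the identity, running from $g_i$ to $g_i\omega_i = x_0^{Q|g_i|}x_1^{-1}x_0^{-Q|g_i|+1}$, an element of $F$ inside $BV$ whose word length is comparable to $Q|g_i|$. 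Assuming without loss of generality $|g_1| \leq |g_2|$, I then want to bridge the two endpoints $x_0^{Q|g_1|}x_1^{-1}x_0^{-Q|g_1|+1}$ and $x_0^{Q|g_2|}x_1^{-1}x_0^{-Q|g_2|+1}$ by an explicit word $p$ purely in $\{x_0, x_1\}$, exactly as in the proof of Theorem \ref{thm1}: something of the form $p = x_0^{Q|g_1|-1}x_1 x_0^{-Q|g_1-g_2|+1}x_1^{-1}x_0^{-Q|g_2|+1}$.

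The key point is that this bridging path $p$ lies in the copy of $F$ inside $BV$, so its behaviour is governed by the caret-counting estimates from Lemma \ref{lem39} and Lemma \ref{lem312} rather than by the hard-to-control crossing numbers; along $p$ no braid crossings are created or destroyed, so the relevant prefixes have word length bounded below by a constant multiple of $\mathcal{N}$ of the corresponding tree-braid-tree diagram, which grows linearly in $Q|g_1|$. Hence every vertex on $p$ stays outside a $(\delta' \min\{|g_1|,|g_2|\})$-neighbourhood of the identity for a suitable $\delta' > 0$ (possibly smaller than the $\delta$ of Proposition \ref{prop412}, chosen to absorb both the $\omega_i$-pieces and the $p$-piece). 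The total length of the concatenated path $\omega_1 \cdot p \cdot \omega_2^{-1}$ is at most $D|g_1|^4 + D|g_2|^4 + (\text{length of } p)$, and since the length of $p$ is $O(|g_1| + |g_2|)$ this is bounded by $p(|g_1|+|g_2|)$ for a polynomial $p(\cdot)$ of degree four; one can take $p(m) = C m^4$ for a suitable constant $C$. For the extensions to $BF$, $\widehat{BV}$ and $\widehat{BF}$, I would note that $BF$ embeds naturally as the subgroup of $BV$ with trivial permutation part and all the subpaths $\omega_1,\dots,\omega_5$ and $p$ used above already lie in $BF$ (they involve only $x_i$'s and braid, not genuine permutations), while $\widehat{BV}$ and $\widehat{BF}$ contain $BV$, $BF$ quasi-isometrically with the extra unbraided strand playing no role in the construction, so the same path works verbatim.

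The main obstacle is bookkeeping the constant $\delta$: Proposition \ref{prop412} already requires shrinking $\delta$ to accommodate the crossing-number blow-up along the $\omega_3$ and $\omega_4$ subpaths, and I must check that the further concatenation with the bridging word $p$ — whose midpoint is genuinely close to the identity in the $F$-metric only if $Q$ is too small — still avoids a uniformly-scaled ball. Concretely, the worry is that near the ``middle'' of $p$ the exponent of $x_0$ could drop low enough that the corresponding tree-braid-tree diagram has few carets; this is exactly the phenomenon the factor $x_0^{-Q|g_1-g_2|+1}$ between the two $x_1^{\pm1}$'s is designed to prevent, since it keeps the diagram ``deep'' (the branch $\bar\ell_0$ long) throughout, by Corollary \ref{cor37}. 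Verifying this lower bound on $|h p'|$ for every prefix $p'$ of $p$ — i.e. the braided analogue of the prefix lemma preceding Theorem \ref{thm1} — is the one genuinely technical step, but it is purely a caret count and reduces to the estimates already established in Lemma \ref{lem39}, Corollary \ref{cor37} (applied to $F \leq BV$) and Lemma \ref{lem312}.
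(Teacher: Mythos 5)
Your proposal follows essentially the same route as the paper: the paper's proof of this theorem simply takes the two paths produced by Proposition \ref{prop412} for $g_1$ and $g_2$ and connects them ``as in the Brown--Thompson group case,'' asserting that no reductions occur. You have merely filled in the bookkeeping that the paper leaves implicit (the explicit bridging word in $\{x_0,x_1\}$, the prefix/caret-count check that it avoids the ball, and the degree-four polynomial bound), so the approach is the same.
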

\begin{proof} 
For element with small carets number the linear divergence of the geodesics 
can be deduced directly from the computation. 
Again by taking two paths we described in Proposition \ref{prop412} 
and connect them as in the Brown-Thompson group case, 
no reductions will occur.
\label{thm415}
\end{proof}
Taking the above estimate, 
we could only say that the group has linear functions as lower bounds for the divergence functions for $BV.$ 
In fact, a stronger result has been proved in \cite{Kodama:2020to} 
indicating that the divergence function of $BV$ is also bounded above by linear functions.
\begin{theorem}[Kodama]
$BV$ has linear divergence function.
\end{theorem}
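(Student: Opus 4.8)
The plan is to prove both halves. The lower bound is the general fact that a one-ended finitely generated group has at least linear divergence, and $BV$ is one-ended; so the content is the matching upper bound $Div_{BV}(m;\delta)\preceq m$, and for this I would follow the scheme of Golan--Sapir and of our Brown--Thompson argument above, while sharpening the braided estimates. As in Proposition~\ref{prop412} it suffices to find constants $\delta,C>0$ such that any two elements $a,b\in BV$ with $|a|,|b|\le m$ are joined in $\mathrm{Cay}(BV,X)$ by a path of length at most $Cm$ avoiding $B(1,\delta m)$; and by the standard reduction it is enough to build, for each $a$, a path of length $O(|a|)$ from $a$ to a \emph{canonical} element $e_{|a|}$ depending only on $|a|$ and staying outside $B(1,\delta|a|)$ --- the canonical elements $e_{|a|},e_{|b|}$ being arranged to lie in a single quasi-isometrically embedded copy of $F\le BV$, where they are joined by a linear path outside the ball using the linear divergence of $F$ \cite{MR3978542}.

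First I would run essentially the construction of Proposition~\ref{prop412}: append the subpaths $\omega_1$ (pushing the support off the first leaf), $\omega_2=x_0^{-M\mathcal{N}(h_1)}x_1x_0^{M\mathcal{N}(h_1)}$ (an $F$-element of length $O(|a|)$ that spreads out the support), $\omega_3$ (reversing the tree pair and unbraiding the first strand), $\omega_4=x_0^{Q|a|}x_1^{-1}x_0^{-Q|a|+1}$, and $\omega_5=h_3^{-1}$, so that the terminal element is exactly $e_{|a|}=\omega_4\in F$, which depends only on $|a|$. The one place Proposition~\ref{prop412} loses linearity is the subpath $\omega_5=h_3^{-1}$, of length $|h_3|$: the estimate from Lemma~\ref{lem2} yields only $|h_3|=O(\mathcal{N}(h_3)\,\mathcal{K}(h_3))$, which is polynomial because $\mathcal{K}$ can a priori vastly exceed $\mathcal{N}$ (Garside-type braids). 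The refinement I would make is to choose $h_3$ as cheaply as possible: take its braid part to be a minimal ``staircase'' braid $\sigma_p\sigma_{p-1}\cdots\sigma_1$ (in infinite-generator notation) carrying the strand $b(0)$ of $h$ to position $0$, so that it crosses each pair of strands at most once and has total crossing number at most $\mathcal{N}(h)$, and then estimate the word length of this particular ``tree / simple-braid / tree'' element \emph{directly} --- not through Lemma~\ref{lem2} --- obtaining $|h_3|=O(\mathcal{N}(h))=O(|a|)$. With this choice every subpath has length $O(|a|)$, the whole path is linear, and the accompanying lemma (that no prefix of $\omega$ re-enters $B(1,\delta|a|)$) carries over, since along the path the number of carets never drops and hence the word length never falls below a linear threshold.

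The main obstacle is exactly that direct word-length bound for the ``staircase'' element $h_3$. Lemma~\ref{lem2} by itself only pins $|h_3|$ between $C_2\max\{\mathcal{N}(h_3),s\}$ and $C_3(\mathcal{N}(h_3)+\mathcal{N}(h_3)\mathcal{K}(h_3))$, and the gap of a factor $\mathcal{N}(h_3)$ is genuine in general --- it is the cost of rewriting the infinite braid generators $\sigma_i,\tau_i$ in the finite generating set. One must therefore show that for the specific elements produced, whose braid part is a single cyclic shift of boundedly-spaced crossings rather than a Garside element, this blow-up does not occur, e.g.\ by writing $h_3$ explicitly as a product of $O(\mathcal{N}(h))$ letters using that the conjugations $x_0^{\pm k}\sigma_1^{\pm1}x_0^{\mp k}$ needed to realize the shifted generators can be telescoped. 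Once this is in hand the rest is bookkeeping: the linear-length paths from $a$ and from $b$ terminate at the $F$-elements $\omega_4(|a|)$ and $\omega_4(|b|)$, which are joined by a linear path avoiding the ball inside the undistorted copy of $F$, giving $Div_{BV}(m;\delta)\preceq m$ and hence, with the automatic lower bound, linear divergence. An alternative and possibly cleaner route is to exploit the extension $1\to\mathcal B\to BV\to V\to 1$ by the stable braid group together with divergence information for braid groups and for $V$, but controlling divergence under group extensions would need its own argument.
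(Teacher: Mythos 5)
This theorem is not proved in the paper at all: it is imported from Kodama's work \cite{Kodama:2020to}, precisely because the paper's own construction (Proposition \ref{prop412}) only produces an avoidant path of length $O(|g|^4)$, yielding a linear \emph{lower} bound but only a polynomial upper bound for $BV$. So you are attempting something strictly stronger than what the surrounding text establishes, and your proposal correctly diagnoses where the paper's argument loses linearity, namely the subpath $\omega_5=h_3^{-1}$ whose length is controlled only via $|h_3|\le C_3(\mathcal{N}(h_3)+\mathcal{N}(h_3)\mathcal{K}(h_3))$ from Lemma \ref{lem2}.

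However, your proof has a genuine gap at exactly the step you flag as ``the main obstacle,'' and the obstacle is not overcome. Choosing the braid part of $h_3$ to be a staircase $\sigma_p\sigma_{p-1}\cdots\sigma_1$ does bring the crossing number down to $\mathcal{K}(h_3)=O(\mathcal{N}(h))$, but substituting this into Lemma \ref{lem2} still only gives $|h_3|=O(\mathcal{N}(h)^2)$: the factor of $\mathcal{N}$ in the upper bound is the cost of expressing each infinite generator $\sigma_i$ in the finite generating set, roughly $O(i)$ letters per crossing, so a staircase of length $p\sim\mathcal{N}(h)$ naively costs $O(\mathcal{N}(h)^2)$. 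Your proposed remedy --- telescoping the conjugating words $x_0^{\pm k}\sigma_1^{\pm1}x_0^{\mp k}$ so that adjacent conjugators cancel --- is plausible for a monotone staircase (consecutive $\sigma_i\sigma_{i-1}$ differ by a bounded conjugation), but it is only asserted, not carried out, and it is the entire content of the theorem: without it the path from $a$ to $e_{|a|}$ has a quadratic subpath and you obtain no improvement over the paper's polynomial bound. A complete argument would need an explicit word of length $O(\mathcal{N}(h))$ in $\{x_0,x_1,\sigma_1,\tau_1\}$ representing the tree/staircase-braid/tree element, together with a verification that the two $F$-words $\mathbf{p}$ and $\mathbf{q}$ of its $\textbf{pcq}$ form do not reintroduce the blow-up when interleaved with the braid letters. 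The remaining pieces of your outline (linear lower bound from one-endedness, joining the canonical endpoints $\omega_4(|a|)$ and $\omega_4(|b|)$ inside an undistorted copy of $F$ using the linear divergence of $F$, and the non-re-entry of prefixes via monotonicity of the caret count) are sound, but they all sit downstream of the unproved length bound for $h_3$.
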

%%%% \hat{BV}
%%%%BF
Now we consider the group $BF < BV,$ which is a finitely generated group 
with finite generating set 
$$\Sigma_{BF} = \{ x_0, x_1, \alpha_{12}, \alpha_{13}, \alpha_{23}, \alpha_{24}, \beta_{12}, \beta_{13},\beta_{23},\beta_{24} \}$$ 
where $\alpha_{i,j}$s and $\beta_{i,j}$s are in fact deduced from 
a sequence of infinite generating set of the pure braid relations in the braid groups \cite{MR2384840}.
These two sequences of infinite generating sets can be written in the infinite generating sets of $BV$, 
$\tau_{i}$s and $\sigma_i$s, by the following \cite{MR3781416},
$$\alpha_{i,j} = \sigma_i\sigma_{i+1}\cdots \sigma_{j-2}\sigma_{j-1}^2 \sigma_{j-2}^{-1} \cdots \sigma_{i}^{-1}$$
$$\beta_{i,j} = \sigma_i\sigma_{i+1}\cdots \sigma_{j-2}\tau_{j-1}^2\sigma_{j-2}^{-1}\cdots \sigma_{i}^{-1}.$$
We take this finite generating set as the standard generating set when we consider the word length of $BF.$
%
%
%\begin{lemma}
%
%For $g \in  BF$ being represented by some reduced tree pair representation $(\mathrm{T}_{(+)},\sigma, \mathrm{T}_{(-)})$ 
%with at least three carets in the each tree, let $l_{\omega}(\cdot)$ denote the branch of the target tree of an element,
%
%$$\mathcal{N}(g) - 1 \leq  \mathcal{N}(gx_0^{\pm1}) \leq \mathcal{N}(g) + 1 $$
%$$\mathcal{N}(g) - 1 \leq \mathcal{N}(gx_1^{\pm1}) \leq \mathcal{N}(g)  $$
%In addition,
%\begin{enumerate}
%
%\item If $l_{0}(g) = 1$, 
%then $\mathcal{N}(gx_0^{\pm1}) = \mathcal{N}(g) +1$ 
%and $l_{0}(gx_{0}^{\pm1}) = 1.$
%The number of the crossings is twice the original number.
%
%\item If $ l_{0}(g) \neq 1$ 
%then $\mathcal{N}(gx_0^{\pm1}) = \mathcal{N}(g) $ 
%or $\mathcal{N}(gx_0^{\pm1}) = \mathcal{N}(g) $.
%Moreover, 
%$l_{0}(gx_{0}^{\pm1}) = l_{0}(g)  -1$. 
%No changes in the number of crossings.
%
%\item  If $ l_{0}(g) \neq 1$ 
%and either $1$ or $01$ is a strict prefix of some branch of $T_{(-)}$ %%
%then $\mathcal{N}(gx_0^{\pm1}) = \mathcal{N}(g) $ 
%and $01$ and $1$ are strict prefixes of some branch of $T_{(-)}(gx_0^{\pm1}).$
%No changes in the number of crossings.
%%The change in the number of the crossings 
%\end{enumerate}
%\end{lemma}
%
%\begin{proof}
%The proof follows from a similar argument in \cite{MR3978542}, 
%since the generator $x_0$ does not have crossings, 
%the increase in the number of crossings does not affect the number increasing in the number of carets in the resulted elements by multiplying $x_0^{\pm1}.$
%\end{proof}
%
\begin{lemma}
Let $g \in BF$ be an element 
with the tree-braid tree diagram pairs $(\mathrm{T}_{n(+)},\sigma, \mathrm{T}_{n(-)})$ 
and let $u\mapsto v$ be a branch of $g,$ 
$h$ be an element of $F < BF.$ 
Let $h' = (h)_{[v]}. $ 
Then $$\mathcal{N}(gh') = \mathcal{N}(g) + \mathcal{N}(h) - 1 .$$
\label{lem417}
%Moreover
\end{lemma}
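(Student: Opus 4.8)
The plan is to reduce this to the corresponding fact already established for the non-braided Brown-Thompson groups, namely Lemma \ref{lem27}, by observing that the operation $(h)_{[v]}$ and the product $gh'$ only interact with the \emph{tree} parts of the diagrams and not with the braid part $\sigma$. Concretely, I would first recall from Subsection \ref{branch} what $h' = (h)_{[v]}$ means at the level of tree-braid-tree diagrams: since $h \in F < BF$, its reduced diagram $(\mathrm{T}_{+}(h), \mathrm{id}, \mathrm{T}_{-}(h))$ has trivial braid (only an identity pure braid on $\mathcal{N}(h)$ strands), so attaching $\mathrm{T}_{+}(h)$ and $\mathrm{T}_{-}(h)$ to the leaf labelled $v$ in a diagram that contains the branch $\ell_u \mapsto \ell_v$ produces a diagram whose braid part is obtained from that of $g$ by a ``cabling'' that replaces the single strand through $v$ by $\mathcal{N}(h)$ parallel unbraided strands. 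In particular $h'$ itself lies in the subgroup $\widehat{BF}$-style copy of $F$ relative to that branch, and multiplying by it on the right of $g$ does not create or cancel any crossings — it only grafts the trees $\mathrm{T}_{\pm}(h)$ onto the leaf $v$ of $g$ exactly as in the non-braided picture.

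The key steps, in order, are: (1) write down the reduced diagram of $h'=(h)_{[v]}$ explicitly, noting its braid component is the identity pure braid and its trees are $\mathrm{T}_{+}(g)$-at-$u$-unchanged / $\mathrm{T}_{-}(h)$-grafted-at-$v$ type objects; (2) form the product $gh'$ by stacking diagrams as in Figure \ref{fig:fig13}, and observe that the merging of the target tree of $g$ with the source tree of $h'$ along the branch $\ell_u \mapsto \ell_v$ is precisely the tree-grafting move analysed in Lemma \ref{lem27}, since the braid of $h'$ is trivial and the branch passes the grafted subtree through to leaf $u$ of the source tree of $g$; (3) conclude that the source tree of $gh'$ is $\mathrm{T}_{+}(g)$ with $\mathrm{T}_{+}(h)$ attached at leaf $u$ and the target tree is $\mathrm{T}_{-}(g)$ with $\mathrm{T}_{-}(h)$ attached at leaf $v$, exactly as in the branch calculus of Subsection \ref{branch}; (4) check reducedness — the resulting diagram is already reduced because $g$'s diagram was reduced, $h$'s was reduced, and the grafting happens at a leaf so no common caret can be cancelled between the two trees of the pair; (5) count: $\mathcal{N}$ counts leaves, grafting a tree with $\mathcal{N}(h)$ leaves onto one leaf replaces that leaf by $\mathcal{N}(h)$ leaves, so the leaf count goes up by $\mathcal{N}(h)-1$, giving $\mathcal{N}(gh') = \mathcal{N}(g) + \mathcal{N}(h) - 1$.

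The main obstacle I anticipate is step (4), verifying that the product diagram is genuinely reduced and that no interaction between the braid $\sigma$ of $g$ and the grafted trees forces a reduction or alters the leaf count — one has to be careful that dragging carets up or down through the braid (the right-hand moves of Figure \ref{fig:fig13}) does not change the number of leaves, which is true because those moves are leaf-number-preserving by construction, but it deserves an explicit remark. A secondary subtlety is making sure the branch $\ell_u(\mathrm{T}_{+}(g)) \mapsto \ell_v(\mathrm{T}_{-}(g))$ survives as a ``transparent'' branch after the braid is present, i.e.\ that the strand through $u$ in $g$'s braid really does land at $v$ so that the cabling is consistent; this is exactly the hypothesis ``$u \mapsto v$ is a branch of $g$'' and so is given, but one should say so. Once these points are dispatched, the counting identity is immediate and the proof is essentially the same one-line argument as Lemma \ref{lem27}, with the braid part along for the ride.
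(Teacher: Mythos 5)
Your proposal is correct and follows essentially the same route as the paper, which simply observes that the argument of Lemma \ref{lem27} carries over verbatim because $h\in F<BF$ has trivial braid part, so only the tree-grafting along the branch $u\mapsto v$ matters. Your additional remarks on reducedness and on the strand through $u$ landing at $v$ are sensible elaborations of the same one-line argument rather than a different approach.
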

\begin{proof}
The proof is similar to the one for Lemma \ref{lem27}.
\end{proof}
\begin{proposition}
There exist constants $\delta, D > 0$ %and a positive integer $Q$ 
such that the following holds. 
Let $g \in BF$ be an element with $\mathcal{N}(g) \geq 4 $. 
Then there is a path which can be interpreted by a word $\omega$ 
of length at most $D | g|$ in the Cayley graph 
$\Gamma = Cay(\mathcal{G}, X)$ which avoids a 
$\delta | g| $-neighbourhood of the identity 
and which has initial vertex $g$ and terminal vertex $\omega$.
\label{prop419}
\end{proposition}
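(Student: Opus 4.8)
The plan is to adapt the construction used for the Brown--Thompson groups in Proposition \ref{prop3.9} and for $BV$ in Proposition \ref{prop412}, but now taking advantage of the fact that $BF$ contains only \emph{pure} braiding. The key point is that, unlike the general $BV$ case where the middle braid part $b'$ of the returning subpath $\omega_3$ can have a number of crossings quadratic or cubic in the number of carets (the ``Garside element'' phenomenon recorded in Lemma \ref{lem2}), for an element $g\in BF$ the braid lies in the pure braid group and the returning subpath only needs to undo the pure braiding on finitely many strands; the crossing count of the relevant braid word stays linearly controlled by the number of carets, because $\alpha_{i,j}$ and $\beta_{i,j}$ each contribute a bounded number of crossings per caret added. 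So I would first record (as the analogue of Lemma \ref{lem39} and its companions, which I may cite or state without reproving) that concatenating a generator $x_0^{\pm1}$, or one of the $\alpha_{i,j}^{\pm1}$, $\beta_{i,j}^{\pm1}$, changes the number of carets $\mathcal{N}$ by at most $1$ and the number of crossings $\mathcal{K}$ linearly in $\mathcal{N}$.

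Next I would carry out exactly the five-subpath construction. Starting from $h\in BF$ with $\mathcal{N}(h)\ge 4$: (i) if $\bar\ell_0(h)=1$ append $\omega_1=x_0^2x_1^{-1}x_0^{-1}$ to make $\bar\ell_0(h_1)>1$, else $\omega_1$ empty; (ii) append $\omega_2=x_0^{-M\mathcal{N}(h_1)}x_1 x_0^{M\mathcal{N}(h_1)}$ with $M$ a large constant depending on the constants of Lemma \ref{lem2} (restricted to $BF$, equivalently using the word-length estimates of \cite{MR2384840}); (iii) append $\omega_3$, the shortest word representing the element with tree-pair-tree diagram $(\mathrm{T}_{(-)},b',\mathrm{T}_{(+)})$ where $b'$ moves the strand $b(0)$ back to position $0$ --- and here, crucially, $b'$ is a \emph{pure} braid supported on boundedly many strands, so by Lemma \ref{lem417} and the linear crossing bound its word length is $\le C\,\mathcal{N}(h_1)$; (iv) append $\omega_4=x_0^{Q|h|}x_1^{-1}x_0^{-Q|h|+1}$ with $Q$ large, which commutes with $h_3$ since $h_3$ is supported on $[0,\ell_0(h_1))$ and $\omega_4$ fixes that interval; (v) append $\omega_5=h_3^{-1}$, so that $g_{|h|}=h_4\omega_5=h_3\omega_4 h_3^{-1}=\omega_4$. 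The upper bound on $\lVert\omega\rVert$ is then, as in Proposition \ref{prop3.9}, linear in $\mathcal{N}(h)$ and hence in $|h|$, giving the constant $D$; the lower bound $|h\omega'|\ge\delta|h|$ for every prefix $\omega'$ follows by the caret-counting argument of the prefix lemma, since no reductions occur when concatenating powers of $x_0$ and since the braid part of $\omega_3$ is chosen minimally.

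The main obstacle --- and the reason this is a separate statement from the $BV$ proposition --- is verifying that $\lVert\omega_3\rVert$ is genuinely \emph{linear} in $\mathcal{N}(h)$ rather than polynomial. For $BV$ one only gets the polynomial bound $C_3\mathcal{N}(h_1)(1+\mathcal{K}(h_1))$ from Lemma \ref{lem2}, and $\mathcal{K}$ itself can be super-linear. The argument that saves the $BF$ case is that the braid $b'$ we must realise is forced to be a pure braid that returns one distinguished strand to its slot without disturbing the already-achieved support condition, and such a pure braid can be written using only $\alpha_{i,j}^{\pm1},\beta_{i,j}^{\pm1}$ with indices bounded by $\mathcal{N}(h_1)$ and with each generator contributing a bounded number of crossings, so that $\mathcal{K}(h_3)\le C\,\mathcal{N}(h_1)$ and hence $\lVert\omega_3\rVert\le C'\mathcal{N}(h_1)$ after re-applying the word-length estimate. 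I would make this precise by induction on the number of strands that are out of place, peeling off one $\alpha$- or $\beta$-type generator at a time, exactly as in the block-form manipulations of \cite{MR2384840}. Once $\omega_3$ is linearly bounded, the rest of the estimate is identical in form to the Brown--Thompson computation, and we conclude that there are constants $\delta,D>0$ with $\delta|g_{|h|}|\le |g_{|h|}\omega'|\le D|g_{|h|}|$ for every prefix $\omega'$ of $\omega$, which is the assertion of the proposition.
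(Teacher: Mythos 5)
Your five-subpath skeleton matches the paper's, but you and the paper part ways at the decisive step, the choice of $\omega_3$, and your version of that step is where the trouble lies. The paper takes $\omega_3$ to be the shortest word with tree-braid-tree diagram $(\mathrm{H}_{-},\mathrm{id},\mathrm{H}_{+})$ --- identity braid part, i.e.\ an element of $F<BF$ --- so that no crossings are introduced along the path, Lemma \ref{lem417} applies verbatim, and the whole estimate reduces to caret counting exactly as in Proposition \ref{prop3.9}. This choice is available in $BF$ precisely because the braiding is pure: the underlying permutation is trivial, the first strand already begins and ends at position $0$, and no braid correction is needed to arrange the support condition. You instead insert a nontrivial pure braid $b'$ ``returning the strand $b(0)$ to position $0$'' and are then forced to prove that $\lVert\omega_3\rVert$ is linear in $\mathcal{N}(h_1)$.

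That linearity argument does not hold up as written. First, the task you assign to $b'$ is vacuous in $BF$ (since $b(0)=0$ already), so the proposed induction ``on the number of strands out of place'' has nothing to induct on; if instead $b'$ is meant to undo some of the actual braiding of $h$, its complexity is governed by $\mathcal{K}(h)$, which by Lemma \ref{lem2} can be super-linear in $\mathcal{N}(h)$, and no linear bound follows. Second, the claim that each $\alpha_{i,j}$ or $\beta_{i,j}$ ``contributes a bounded number of crossings'' is false: $\alpha_{i,j}=\sigma_i\cdots\sigma_{j-2}\sigma_{j-1}^2\sigma_{j-2}^{-1}\cdots\sigma_i^{-1}$ has roughly $2(j-i)$ crossings, which grows with $\mathcal{N}$. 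Third, Lemma \ref{lem417} is a caret-count statement for $h\in F$ and gives no word-length control on braid words, so it cannot be cited for that purpose. The repair is simply to drop $b'$ altogether and take the braid part of $\omega_3$ to be the identity; after that your computation coincides with the paper's, and the constants $D$ and $\delta$ come from caret counting alone.
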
 
\begin{proof}
Let $h \in BF$ be an element 
whose reduced tree-braid-tree diagram pair $(\mathrm{H}_{+}, \sigma_h, \mathrm{H}_{-})$ 
has at least three carets in each tree. 
Again when $\bar{\ell}_{(0)}(h) = 1$, 
we add an extra path $\omega_1 = x_0^2x_1^{-1}x_0^{-1}$ and 
then add path 
$\omega_2 = x_0^{-M\mathcal{N}(h_1)} x_{1}x_0^{M\mathcal{N}(h_1)}$ 
followed by the shortest word 
$\omega_3$ which has the reduced tree-braid-tree diagram pair 
$(\mathrm{H}_{-}, id, \mathrm{H}_{+}).$ 
This works because $BF$ is torsion free, our paths only rely on the changes in the number of carets.
Next we take 
$\omega_4 = x_0^{Q| h |} x_{1}^{-1} x_0^{- Q| h | +1},$ 
where $Q \geq 10\frac{M}{C_{1}^2}.$ 
%Otherwise we add directly the path $\omega_4 = x_0^{Q | h |} x_{1}^{-1} x_0^{- Q | h | +1}.$
Since the path we are adding on represents an element in $F,$ 
the number of the crossings stays the same, thus as long as we take $Q$ large enough, 
the there will not be the influence of Garside elements \cite{MR2514382} on the distance. 
This can also be seen from the Lemma \ref{lem417}.
The estimate of the length of the path only relies on the generators $x_0$ and $x_1$ 
and hence on the number of carets.
\end{proof}
\begin{theorem}
$BF$ has linear divergence.
\label{thm420}
\end{theorem}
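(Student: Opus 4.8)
The plan is to derive Theorem \ref{thm420} from Proposition \ref{prop419} in the same way that Theorem \ref{thm415} was derived from Proposition \ref{prop412}, the decisive simplification being that $BF$ is torsion-free: every subpath and every bridging word occurring in the construction is a word in $x_0$ and $x_1$, hence lies in the copy of $F$ inside $BF$, so the number of crossings stays constant throughout and all length estimates are linear in the number of carets (Lemma \ref{lem417}, Lemma \ref{lem2}). Consequently the path produced here has \emph{linear} length, not merely polynomial as in the case of $BV$.

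For elements of small complexity, those with $\mathcal{N}(g_i) < 4$, there are only finitely many local configurations near the identity and a linear detour exists by direct inspection, so assume $\mathcal{N}(g_i) \geq 4$ and, without loss of generality, $|g_1| \leq |g_2|$. Applying Proposition \ref{prop419} to $g_1$ and to $g_2$ produces paths $\omega_1$ and $\omega_2$, of lengths at most $D|g_1|$ and $D|g_2|$, which start at $g_1$ and $g_2$, stay outside the $\delta|g_i|$-neighbourhood of the identity, and terminate at the elements $x_0^{Q|g_1|}x_1^{-1}x_0^{-Q|g_1|+1}$ and $x_0^{Q|g_2|}x_1^{-1}x_0^{-Q|g_2|+1}$ of $F$. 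To join these two terminal vertices I would use the bridging word from the proof of the Brown-Thompson case (Theorem \ref{thm1}), namely $p = x_0^{Q|g_1|-1} x_1 x_0^{-Q(|g_2|-|g_1|)+1} x_1^{-1} x_0^{-Q|g_2|+1}$, which represents an element of $F$ and has length $O\bigl(Q(|g_1|+|g_2|)\bigr)$. The concatenation $\omega_1\, p\, \omega_2^{-1}$ then runs from $g_1$ to $g_2$.

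What remains is bookkeeping with carets: exactly as in the proof that prefixes of $\omega$ stay far from the identity and as in the proof of Theorem \ref{thm1}, one checks that no cancellation occurs when $\omega_1$, $p$ and $\omega_2^{-1}$ are juxtaposed and that the concatenated path stays outside a $\delta'\min\{|g_1|,|g_2|\}$-neighbourhood of the identity for a suitable $\delta' > 0$; the extremal point is the middle of $p$, where the word contains a block $x_0^{k}$ with $|k|$ of order $Q\min\{|g_1|,|g_2|\}$, hence of word length at least a constant times $Q\min\{|g_1|,|g_2|\}$ by the caret estimate for $F < BF$. Summing the three pieces shows the total length is at most a linear function of $|g_1| + |g_2|$. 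Since every pair of vertices of $\mathrm{Cay}(BF, \Sigma_{BF})$ at distance at most $m$ can thus be joined by a path of length $O(m)$ avoiding a ball of radius proportional to $m$ about the identity, and hence, by homogeneity, about any vertex, the divergence function satisfies $Div(m;\delta') \preceq m$; as $Div(m;\delta')$ is always at least $m$, $BF$ has linear divergence.

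I do not expect a genuine obstacle beyond this bookkeeping. The braid part of $BF$ never interferes, precisely because $BF$ has no torsion and the whole detour lives in $F$, so all the substantive work is already contained in Proposition \ref{prop419}. The one point deserving care is confirming that the juxtaposed word $\omega_1\, p\, \omega_2^{-1}$ is reduced enough for its prefixes to stay far from the identity, which is handled by the same caret-counting lemmas (Lemma \ref{lem39}, Lemma \ref{lem417}) used throughout the section.
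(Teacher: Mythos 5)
Your proposal is correct and follows essentially the same route as the paper: apply Proposition \ref{prop419} to each of $g_1$, $g_2$, bridge the two terminal vertices with a word in $x_0, x_1$ exactly as in the proof of Theorem \ref{thm1}, and observe that since the entire detour lives in $F < BF$ the crossing number never interferes and all estimates reduce to caret counting, giving a linear bound provided $Q$ is large enough. The paper's own proof is a one-sentence compression of precisely this argument, so your version simply supplies the bookkeeping the paper leaves implicit.
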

%%%
\begin{proof}
Provided the constant $Q$ taken is large enough, 
the concatenation of the two subpaths between the two words representing two elements in $BV$ 
that reduces the number of crossings are not going to affect the length of the linear path.
\end{proof}
\begin{remark}
According to \cite{MR2384840, BRIN_2006}, %pure braid, Brin
$\widehat{BF}$ and  $\widehat{BV}$ are also finitely generated, 
where $\widehat{BV}$ is generated by standard generating set in $F$ and the $\sigma$ generating set, 
$\widehat{BF}$ only contains the standard generating set of $F$ 
and the $\alpha$-generators induced from pure braid relation. 
In the case of $\widehat{BV}$, the estimate can be made similarly by taking five subpaths 
and concatenating them. 
The middle subpath corresponding to $\omega_3$ taken, however, 
should be modified since $\tau$-generators are not in the generating set of $\widehat{BV}.$
In $\widehat{BF},$ the word length can be estimated similarly as in the case of $BF,$ 
the path only consists of generators from the subgroup isomorphic to $F$ 
and hence the geodesics all have linear divergence.
\end{remark}
\begin{corollary}
$\widehat{BF}$ and $\widehat{BV}$ have linear divergence.
\end{corollary}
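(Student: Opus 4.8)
The plan is to handle $\widehat{BF}$ and $\widehat{BV}$ separately, in each case mirroring the construction used for the corresponding non-hatted group and then supplying the matching upper-bound.

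For $\widehat{BF}$ I would simply rerun the argument of Proposition \ref{prop419} and Theorem \ref{thm420} verbatim. The structural point is that $\widehat{BF}$ is torsion-free and that the five subpaths $\omega_1,\dots,\omega_5$ constructed there use only the generators $x_0$, $x_1$ coming from the $F$-subgroup; hence the number of crossings of the intermediate tree-braid-tree diagrams never grows, and the word length of every subpath is controlled linearly by the number of carets, exactly as in Lemma \ref{lem417} and the estimate behind Proposition \ref{prop1}. Concatenating two such paths for two elements $g_1,g_2$ as in the proof of Theorem \ref{thm420} then yields a path of length $O(|g_1|+|g_2|)$ avoiding a $\delta\min\{|g_1|,|g_2|\}$-neighbourhood of the identity, which is the linear upper bound; the linear lower bound holds automatically for any finitely generated group, so $\widehat{BF}$ has linear divergence.

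For $\widehat{BV}$ the same five-subpath scheme applies, with the single change indicated in the Remark: since $\tau$-generators do not belong to the standard generating set of $\widehat{BV}$, the middle subpath $\omega_3$ must be taken as a shortest word, written only in the $\sigma_i$'s, representing the braid that sends the strand $b(0)$ to the leftmost position; this is possible precisely because in $\widehat{BV}$ the last strand is unbraided. By Lemma \ref{lem2} the length of $\omega_3$ is still only polynomially bounded in $|h|$, so this construction by itself gives only that linear functions are lower bounds for the divergence of $\widehat{BV}$. To upgrade to a linear upper bound I would use the fact recorded just after the definition of $\widehat{BV}$, namely that $BV$ embeds into $\widehat{BV}$ and $\widehat{BV}$ embeds back into $BV$; once one checks these inclusions are quasi-isometric embeddings, $\widehat{BV}$ is quasi-isometric to $BV$, and since divergence is a quasi-isometry invariant and $BV$ has linear divergence by Kodama's theorem, so does $\widehat{BV}$.

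The main obstacle is this last transfer step for $\widehat{BV}$: one must verify that the natural maps between $BV$ and $\widehat{BV}$ are undistorted, so that the quasi-isometry invariance of divergence can be invoked to import Kodama's linear upper bound. For $\widehat{BF}$ there is essentially no obstacle beyond bookkeeping, since torsion-freeness keeps the crossing count from ever interfering and the whole argument remains linear throughout.
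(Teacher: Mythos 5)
Your treatment of $\widehat{BF}$ is fine and is essentially what the paper does: rerun the five-subpath construction of Proposition \ref{prop419} and Theorem \ref{thm420}, observing that every subpath lies in the copy of $F$ inside $\widehat{BF}$, so the crossing number never interferes and all estimates are governed by caret counts alone. That half of the corollary raises no objection.

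The $\widehat{BV}$ half contains a genuine gap. Your transfer step asserts that once the inclusions $BV\hookrightarrow\widehat{BV}$ and $\widehat{BV}\hookrightarrow BV$ are checked to be quasi-isometric embeddings, the two groups are quasi-isometric, whence Kodama's linear upper bound imports. But mutual undistorted embeddings do \emph{not} imply quasi-isometry: a quasi-isometry additionally requires coarse surjectivity, and there is no Schr\"oder--Bernstein theorem for quasi-isometric embeddings of groups. The composite $BV\to\widehat{BV}\to BV$ need not be at bounded distance from the identity, so nothing forces either embedding to be coarsely onto. Consequently the invocation of quasi-isometry invariance of divergence is unjustified as stated, and your argument for $\widehat{BV}$ delivers only the (automatic) linear lower bound together with the polynomial upper bound coming from Lemma \ref{lem2} applied to the modified $\omega_3$. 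The intended argument instead runs the path construction of Proposition \ref{prop412} directly inside $\widehat{BV}$, replacing $\omega_3$ by a word in the $\sigma_i$-generators (possible because the last strand is unbraided) and controlling its length by focusing on the braided part of the tree-braid-tree diagrams, so that the analogue of Kodama's estimate applies to $\widehat{BV}$ itself rather than being transported across an unproven quasi-isometry. To repair your version you would either have to prove coarse density of one of the embeddings, or abandon the transfer and bound the length of the connecting path intrinsically in $\widehat{BV}$.
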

\begin{proof}
The arguments here are the analogues of the proof for $BF$ and $BV$ 
by focusing on the ``braided" part of the tree-braid-tree diagrams pair.
\end{proof}
\begin{remark}%{Divergence property of the normal subgroups, pure braided Thompson groups.}
In the previous sections, 
we considered the divergence property of the geodesics in the Cayley graph of braided Thompson groups.
We could also consider more generally 
the divergence property of the subgroups 
such as the the normal subgroup of the braided Thompson groups 
for, on one hand, 
many of the normal subgroups of the braided Thompson group are being investigated in \cite{MR3781416}, %Zeremsky 
and they are still mysterious; 
On the other hand, 
the connection between the distortion of the finitely generated normal subgroups 
of a finitely generated group 
and the divergence of both groups has been indicated in \cite{MR3474592}, %tran
so we can think of the analogue in braided Thompson groups. 

In Thompson's group $F,$ 
the proper quotient subgroups of $F$ are abelian 
which somehow coincides with the linear divergence property of $F,$
whereas, 
in the case of the braided Thompson group,
the surjective map from $BF$ to $F$ provide an exact sequence, %illustration
where the kernel corresponding to the pure braid part of the groups 
provide natural subgroups of $BF.$
However, the kernel is not finitely generated 
and there are also normal subgroups containing the commutator subgroups $[BF, BF]$ 
of which the presentation may be extremely complicated \cite{MR3781416}. 

Here divergence property gives some indication 
on the subgroup distortion of the normal subgroups inside the $BF$ and $BV.$
\end{remark}
\begin{remark}
At the point of finishing this work, 
I was told that Kodama also has a result on the divergence function of the group $BV.$
\end{remark}
\section*{Acknowledgments}
I am very grateful for the help and support from my supervisor Takuya Sakasai during 
while I am preparing for this work and during this hard time 
and I would also like to thank Tomohiro Fukaya and his student Yuya Kodama for helpful discussions.
Last but not the least, 
I would also like to express my gratitude to Sadayoshi Kojima for helpful discussion and suggestions
and for anonymous reviewers for carefully going through the manuscript 
and pointing out mistakes and inappropriate descriptions.  
%Last but not least, 
%I wish to thank anonymous reviewers for carefully checking out all the details and for all the helpful suggestions.

\bibliographystyle{plain}

\section*{Conflict of interests}
The authors declare that they have no conflict of interest.

\section*{Data Avaliability statement}
Data sharing not applicable to this article as no datasets were generated or analysed during the current study.
\bibliography{Div20}{}

\begin{thebibliography}{10}

\bibitem{MR2104771}
Jean-Camille Birget.
\newblock The groups of {R}ichard {T}hompson and complexity.
\newblock volume~14, pages 569--626. 2004.
\newblock International Conference on Semigroups and Groups in honor of the
  65th birthday of Prof. John Rhodes.

\bibitem{birget2004groups}
Jean-Camille Birget.
\newblock The groups of {R}ichard {T}hompson and complexity.
\newblock {\em International Journal of Algebra and Computation},
  14(05n06):569--626, 2004.

\bibitem{MR3091272}
Collin Bleak and Olga Salazar-D\'{\i}az.
\newblock Free products in {R}. {T}hompson's group {$V$}.
\newblock {\em Trans. Amer. Math. Soc.}, 365(11):5967--5997, 2013.

\bibitem{MR2384840}
Tom Brady, Jos\'{e} Burillo, Sean Cleary, and Melanie Stein.
\newblock Pure braid subgroups of braided {T}hompson's groups.
\newblock {\em Publ. Mat.}, 52(1):57--89, 2008.

\bibitem{BRIN_2006}
Matthew~G. Brin.
\newblock {The} {Algebra} {of} {strand} {splitting} {II}: A {presentation}
  {for} {the} {braid} {group} {on} {one} {strand}.
\newblock {\em International Journal of Algebra and Computation},
  16(01):203--219, feb 2006.

\bibitem{MR2364825}
Matthew~G. Brin.
\newblock The {A}lgebra of strand splitting. {I}. {A} braided version of
  {T}hompson's group {$V$}.
\newblock {\em J. Group Theory}, 10(6):757--788, 2007.

\bibitem{MR885095}
Kenneth~S. Brown.
\newblock Finiteness properties of groups.
\newblock In {\em Proceedings of the {N}orthwestern conference on cohomology of
  groups ({E}vanston, {I}ll., 1985)}, volume~44, pages 45--75, 1987.

\bibitem{MR1806724}
J.~Burillo, S.~Cleary, and M.~I. Stein.
\newblock Metrics and embeddings of generalizations of {T}hompson's group
  {$F$}.
\newblock {\em Trans. Amer. Math. Soc.}, 353(4):1677--1689, 2001.

\bibitem{MR1670622}
Jos\'{e} Burillo.
\newblock Quasi-isometrically embedded subgroups of {T}hompson's group {$F$}.
\newblock {\em J. Algebra}, 212(1):65--78, 1999.

\bibitem{MR2514382}
Jos\'{e} Burillo and Sean Cleary.
\newblock Metric properties of braided {T}hompson's groups.
\newblock {\em Indiana Univ. Math. J.}, 58(2):605--615, 2009.

\bibitem{MR2452818}
Jos\'{e} Burillo, Sean Cleary, Melanie Stein, and Jennifer Taback.
\newblock Combinatorial and metric properties of {T}hompson's group {$T$}.
\newblock {\em Trans. Amer. Math. Soc.}, 361(2):631--652, 2009.

\bibitem{MR3545879}
Kai-Uwe Bux, Martin~G. Fluch, Marco Marschler, Stefan Witzel, and Matthew C.~B.
  Zaremsky.
\newblock The braided {T}hompson's groups are of type {$\rm F_\infty$}.
\newblock {\em J. Reine Angew. Math.}, 718:59--101, 2016.
\newblock With an appendix by Zaremsky.

\bibitem{KD08}
Kai-Uwe Bux and Dmitriy Sonkin.
\newblock Some remarks on the braided thompson group {BV}.
\newblock {\em arXiv preprint math:0807.0061.}

\bibitem{MR1426438}
J.~W. Cannon, W.~J. Floyd, and W.~R. Parry.
\newblock Introductory notes on {R}ichard {T}hompson's groups.
\newblock {\em Enseign. Math. (2)}, 42(3-4):215--256, 1996.

\bibitem{MR2105432}
Patrick Dehornoy.
\newblock Braid-based cryptography.
\newblock In {\em Group theory, statistics, and cryptography}, volume 360 of
  {\em Contemp. Math.}, pages 5--33. Amer. Math. Soc., Providence, RI, 2004.

\bibitem{Dru_u_2009}
Cornelia Dru{\c{t}}u, Shahar Mozes, and Mark Sapir.
\newblock Divergence in lattices in semisimple {L}ie groups and graphs of
  groups.
\newblock {\em Transactions of the American Mathematical Society},
  362(5):2451--2505, dec 2009.

\bibitem{MR2393179}
Daniel Farley.
\newblock The action of {T}hompson's group on a {$\rm CAT(0)$} boundary.
\newblock {\em Groups Geom. Dyn.}, 2(2):185--222, 2008.

\bibitem{MR2952772}
Louis Funar, Christophe Kapoudjian, and Vlad Sergiescu.
\newblock Asymptotically rigid mapping class groups and {T}hompson's groups.
\newblock In {\em Handbook of {T}eichm\"{u}ller theory. {V}olume {III}},
  volume~17 of {\em IRMA Lect. Math. Theor. Phys.}, pages 595--664. Eur. Math.
  Soc., Z\"{u}rich, 2012.

\bibitem{MR1302334}
Stephen.~M. Gersten.
\newblock Divergence in {$3$}-manifold groups.
\newblock {\em Geom. Funct. Anal.}, 4(6):633--647, 1994.

\bibitem{MR3978542}
Gili Golan and Mark Sapir.
\newblock Divergence functions of {T}hompson groups.
\newblock {\em Geom. Dedicata}, 201:227--242, 2019.

\bibitem{MR1253544}
M.~Gromov.
\newblock Asymptotic invariants of infinite groups.
\newblock In {\em Geometric group theory, {V}ol. 2 ({S}ussex, 1991)}, volume
  182 of {\em London Math. Soc. Lecture Note Ser.}, pages 1--295. Cambridge
  Univ. Press, Cambridge, 1993.

\bibitem{MR1750493}
Victor~S. Guba.
\newblock Polynomial isoperimetric inequalities for {R}ichard {T}hompson's
  groups {$F$}, {$T$}, and {$V$}.
\newblock In {\em Algorithmic problems in groups and semigroups ({L}incoln,
  {NE}, 1998)}, Trends Math., pages 91--120. Birkh\"{a}user Boston, Boston, MA,
  2000.

\bibitem{MR2104775}
Victor~S. Guba.
\newblock On the properties of the {C}ayley graph of {R}ichard {T}hompson's
  group {$F$}.
\newblock volume~14, pages 677--702. 2004.
\newblock International Conference on Semigroups and Groups in honor of the
  65th birthday of Prof. John Rhodes.

\bibitem{Kodama:2020to}
Yuya Kodama.
\newblock Divergence function of the braided {T}hompson group.
\newblock {\em arXiv preprint math/2012.03785.}, 12 2020.

\bibitem{MR0396769}
Ralph McKenzie and Richard~J. Thompson.
\newblock An elementary construction of unsolvable word problems in group
  theory.
\newblock In {\em Word problems: decision problems and the {B}urnside problem
  in group theory ({C}onf., {U}niv. {C}alifornia, {I}rvine, {C}alif. 1969;
  dedicated to {H}anna {N}eumann)}, volume~71 of {\em Studies in Logic and the
  Foundations of Math.}, pages 457--478. 1973.

\bibitem{Sheng:2018aa}
Xiaobing Sheng.
\newblock Quasi-isometric embedding from the generalised {T}hompson's group
  {$T_n$} to {$T$}.
\newblock {\em To appear.}, 03 2018.

\bibitem{MR1094555}
Melanie Stein.
\newblock Groups of piecewise linear homeomorphisms.
\newblock {\em Trans. Amer. Math. Soc.}, 332(2):477--514, 1992.

\bibitem{MR3474592}
Hung~C. Tran.
\newblock {\em Relative divergence, subgroup distortion, and geodesic
  divergence}.
\newblock ProQuest LLC, Ann Arbor, MI, 2015.
\newblock Thesis (Ph.D.)--The University of Wisconsin - Milwaukee.

\bibitem{MR3781416}
Matthew C.~B. Zaremsky.
\newblock On normal subgroups of the braided {T}hompson groups.
\newblock {\em Groups Geom. Dyn.}, 12(1):65--92, 2018.

\end{thebibliography}

%\printbibliography

\end{document}